\documentclass[11pt,a4paper]{article}
\usepackage[T1]{fontenc}
\usepackage[utf8]{inputenc}
\usepackage{lmodern}
\usepackage[margin=27mm]{geometry}
\usepackage{amsmath,amssymb,amsthm,mathtools}
\usepackage{booktabs,array,enumitem}
\usepackage{tikz}
\usetikzlibrary{calc}
\usepackage{microtype}
\usepackage{placeins}
\usepackage[colorlinks=true,linkcolor=blue!45!black,citecolor=blue!45!black,
  urlcolor=blue!45!black]{hyperref}
\usepackage[nameinlink,noabbrev]{cleveref}
\newtheorem{theorem}{Theorem}[section]
\newtheorem{lemma}[theorem]{Lemma}
\newtheorem{corollary}[theorem]{Corollary}
\newtheorem{proposition}[theorem]{Proposition}
\theoremstyle{definition}
\newtheorem{definition}[theorem]{Definition}
\newtheorem{problem}[theorem]{Problem}
\theoremstyle{remark}
\newtheorem{remark}[theorem]{Remark}
\newcommand{\floor}[1]{\left\lfloor #1\right\rfloor}
\setlist[enumerate]{leftmargin=*,itemsep=3pt,topsep=5pt}
\setlist[itemize]{leftmargin=*,itemsep=3pt,topsep=5pt}
\hypersetup{pdftitle={Extremal graphs without cycles of length 0 modulo 4},
 pdfsubject={Constructive classifications for general and 2-connected graphs}}
\title{Extremal graphs without cycles of length 0 modulo 4
\footnote{E-mail addresses:
\texttt{hg@sit.edu.cn} (Y. Hu) and
\texttt{mathdzhang@163.com} (L. Zhang).}}

\author{ Yanan Hu$^{a}$,\quad Leilei Zhang$^{b}$\thanks{Corresponding author.}
\\[3pt]
\small $^{a}$School of Science, Shanghai Institute of Technology, Shanghai 201418, China
\\[2pt]
\small $^{b}$Faculty of Environment and Information Sciences, Yokohama National University,
\\[2pt]
\small Yokohama 240-8501, Japan
}
\date{}

\begin{document}
\maketitle
\vspace{-1.5em}
\begin{abstract}
In a remarkable recent result, Gy\H{o}ri, Li, Salia, Tompkins, Varga and Zhu determined the maximum size in a graph of given order and containing no cycle of length divisible by $4$. In this paper, we give a complete characterization of all graphs attaining this maximum. This answers a question posed by Li, Pan and Shi. Very recently, Chu, Park and Ryu established a sharp upper bound on the number of edges in a 2-connected graph containing no cycle of length divisible by 4. We also give a complete characterization of all such graphs attaining this bound.

\noindent\textbf{Keywords:} extremal graph; cycle length modulo four; 2-connected graph; Whitney switch.

\noindent\textbf{AMS Subject Classification:} 05C35, 05C38, 05C40.
\end{abstract}

\section{Introduction}\label{sec:intro}

Unless otherwise stated, all graphs considered in this paper are finite, simple and undirected. We follow the notation and terminology of West~\cite{10}. For a graph $G$, its vertex set and edge set are denoted by $V(G)$ and $E(G)$,
respectively. Its order and size are $v(G)=|V(G)|$ and $e(G)=|E(G)|$. For a family $\mathcal H$ of graphs, a graph is \emph{$\mathcal H$-free} if it contains no member of $\mathcal H$ as a subgraph. The \emph{Tur\'an number} $\operatorname{ex}(n,\mathcal H)$ is the maximum number of edges in an $\mathcal H$-free graph of order $n$. An $\mathcal H$-free graph of order $n$ is \emph{extremal} if it has $\operatorname{ex}(n,\mathcal H)$ edges. Let $K_n$ denote the complete graph of order $n$.  For $r\in\{0,1,2,3\}$, let $\mathcal C_{r\bmod4}$ denote the family of cycles whose lengths are congruent to $r$ modulo $4$.

Determining Tur\'an numbers and characterizing the corresponding extremal graphs are central problems in extremal graph theory~\cite{FS,Turan}. The celebrated Erd\H{o}s--Stone--Simonovits theorem determines the leading term of the Tur\'an number of every fixed graph with chromatic number at least 3. For bipartite graphs, however, the theorem gives only the bound $o(n^2)$. Such problems are called degenerate extremal problems and are often much harder. A prominent example is the Tur\'an problem for $C_4$: although its asymptotic value and exact values for certain orders are known, determining its exact value for every order remains difficult~\cite{Furedi,FS}. This has motivated the study of related problems with other restrictions on cycle lengths.  In particular, the Tur\'an problem for the family of $\mathcal C_{0\bmod4}$ has attracted considerable attention.  For $r\in\{1,2,3\}$, the extremal numbers $\operatorname{ex}(n,\mathcal C_{r\bmod4})$ and all corresponding extremal graphs have been determined~\cite{LPS}. In a remarkable recent result, Gy\H{o}ri, Li, Salia, Tompkins, Varga and Zhu~\cite{GLSTVZ} determined the exact value of $\operatorname{ex}(n,\mathcal C_{0\bmod4})$ for every $n$.

\begin{theorem}[Gy\H{o}ri, Li, Salia, Tompkins, Varga and Zhu~{\cite{GLSTVZ}}] \label{thm:known-general}
Let $n\ge1$ be an integer. Then
\begin{equation}\label{eq:bound}
 \operatorname{ex}(n,\mathcal C_{0\bmod4}) =\floor{\frac{19(n-1)}{12}}.
\end{equation}
\end{theorem}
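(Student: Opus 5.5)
Proving the formula $\floor{19(n-1)/12}$ for $\operatorname{ex}(n,\mathcal C_{0\bmod4})$ splits into a construction for the lower bound and a block-decomposition argument for the upper bound.

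\textbf{Lower bound.} The ratio $19/12$ suggests gluing copies of one fixed $\mathcal C_{0\bmod4}$-free block with $13$ vertices and $19$ edges at cut vertices. Each such copy adds $12$ new vertices and $19$ edges. I would first exhibit this block explicitly. A natural candidate is a subdivided or theta-like structure built from triangles and pentagons sharing edges, since an edge-sharing pair of cycles of lengths $a,b$ creates a cycle of length $a+b-2$. One must check that all cycle lengths avoid $0 \bmod 4$. I would then take a tree-like chain of such blocks glued at cut vertices. Every cycle lies inside a single block, so the chain stays $\mathcal C_{0\bmod4}$-free. For the remaining $n-1 \bmod 12$ vertices, I would attach small blocks with optimal edge density, namely triangles, $K_4^-$-type blocks and smaller cycles-with-chords, to achieve exactly $\floor{19(n-1)/12}$. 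This requires a short table of small extremal blocks for each residue.

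\textbf{Upper bound.} Every cycle lies in a $2$-connected block. Since $e(G)=\sum_B e(B)$ and $n-1\ge \sum_B (v(B)-1)$ with equality for connected $G$, it suffices to prove that every $2$-connected $\mathcal C_{0\bmod4}$-free graph $B$ satisfies $e(B)\le \frac{19}{12}(v(B)-1)$; single edges satisfy this trivially. Summing then gives the bound, and integrality gives the floor. For the block inequality I would take an ear decomposition $B=C\cup P_1\cup\dots\cup P_k$. Each ear contributes $(\text{edges})-(\text{new vertices})=1$, so $e(B)=v(B)+k$. The goal becomes $k\le \frac{7v(B)-19}{12}$, that is, \emph{the ears must be long on average}. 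Short ears, such as chords or ears of length $2$, together with the existing paths create cycles of many lengths. The key lemma to prove is that in a $2$-connected graph, three internally disjoint paths between two vertices with lengths $a,b,c$ force cycles of lengths $a+b$, $b+c$ and $a+c$. Hence $\mathcal C_{0\bmod4}$-freeness restricts the residues mod $4$ of all paths in a theta subgraph. Combining several overlapping thetas then forces many subdivision vertices per ear.

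\textbf{Main obstacle.} I would run a discharging or minimal-counterexample argument. Take a minimal $2$-connected counterexample, contract long induced paths (a path of length $\ge 5$ can be shortened by $4$ without changing any residue, preserving freeness and the inequality in the wrong direction), and so reduce to graphs whose degree-$2$ paths are short. Then bound the number of branch vertices of degree $\ge3$ via the theta residue constraints, showing each branch vertex must be accompanied by enough degree-$2$ vertices to give average degree at most $\frac{19}{6}$ asymptotically. The hardest part is this case analysis on dense configurations, for example two adjacent branch vertices with several short connecting paths. It is where the exact constant $19/12$, attained by the $13$-vertex block, must come out, and I would expect a lengthy structural classification of the possible local configurations there.
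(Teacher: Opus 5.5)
\textbf{Verdict: there is a genuine gap.} The paper does not prove this theorem; it quotes it from Gy\H{o}ri et al.\ and uses $D(G)\ge0$ as an input throughout. Judged on its own, your proposal leaves out the upper bound, which is the whole content of the theorem.

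The reduction to $2$-connected blocks and the identity $e(B)=v(B)+k$ are fine. After that the upper bound is only announced. The theta observation is immediate and gives no control on individual ears. Chords and $2$-edge ears do occur in extremal graphs: $T(x,y)$, with $6$ vertices and $7$ edges, is itself extremal and is a hexagon with a chord, and $L_8$ is built using $2$-edge ears. So a claim of ``many subdivision vertices per ear'' is false, and only a global count can work. The ``discharging or minimal-counterexample argument'' that is supposed to supply this count is never specified.

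You are also aiming at the wrong target. Every $2$-connected $\mathcal C_{0\bmod4}$-free graph satisfies $e\le(3v-1)/2$ (Theorem~\ref{thm2}), i.e.\ average degree below $3$. So $19/12$ is not an asymptotic density that discharging could extract. In the paper's notation $D=v-13+6d$. Your block inequality therefore follows from $d\ge0$ once $v\ge13$, but it demands $d\ge1$ for $7\le v\le12$ and $d\ge2$ for $v\le6$. The exact bound is decided by small blocks and by the additive constant, which a bound on average degree cannot capture.

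Section~2 shows what the missing part has to contain. Its lemmas (partly adapted from Gy\H{o}ri et al.) do not use the bound itself, and they can be assembled into a minimal-counterexample proof as follows.
\begin{itemize}
\item A minimal counterexample $G$ is $2$-connected, by additivity of $D$ over blocks (Lemma~\ref{lem:bookkeeping}).
\item Deleting two adjacent vertices of degree $2$ lowers $D$ by $2$, and deleting a triangle-star lowers it by $4$. So neither configuration occurs anywhere in the Whitney-switching orbit of $G$. This is much stronger than your shortening by $4$, which still allows threads of length $3$ or $4$.
\item If $G$ has a good $2$-cut, Lemmas~\ref{lem:cuts}, \ref{lem:terminal-side} and~\ref{lem:structure} show that $G$ consists of two end pieces, each a triangle or a copy of $T(x,y)$. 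These are joined by two paths with at most two edges between them, and Corollary~\ref{cor:structure-counts} then gives $D\ge0$ directly.
\item If there is no good cut, planarity (Lemma~\ref{lem:planar}) and Lemma~\ref{lem:core} make the suppressed graph $3$-connected and plane, with threads of length at most $2$. Faces are cycles of length $\not\equiv0\pmod4$, with at most one triangle and at most four pentagons (Lemmas~\ref{lem:faces} and~\ref{lem:pentagons}). Euler's formula then gives $d\ge2$, hence $D\ge v-1>0$.
\end{itemize}
The concrete ideas your sketch lacks are planarity, the exclusion of even thetas, parity flexibility in nonbipartite blocks (Lemma~\ref{lem:parity}) and the bipartite bound (Lemma~\ref{lem:bipartite-bound}).

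The lower bound is closer to complete but has two problems. First, $K_4^-$ contains a $4$-cycle, so it cannot be used. Second, for $n\equiv8\pmod{12}$ you need an $8$-vertex block with $11$ edges such as $L_8$, which is not a cycle with chords. Copies of $L_{13}$ glued at cut vertices, together with at most one $L_8$, triangles, and at most one pendant edge, cover all residues.
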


Gy\H{o}ri et al.~\cite{GLSTVZ} gave constructions attaining this bound at every order, but did not provide a complete characterization of the extremal graphs. Recently, Li, Pan and Shi~\cite{LPS} discussed the extremal numbers $\operatorname{ex}(n,\mathcal C_{r\bmod4})$ and the corresponding extremal graphs for $r\in\{0,1,2,3\}$. For $r=2$, they extended a result of Gao, Li, Ma and Xie~\cite{GLMX}, determining the extremal number at every order and characterizing all extremal graphs. For $r=1$ and $r=3$, the extremal numbers and all extremal graphs follow from the result of F\"uredi and Gunderson~\cite{FG} on $C_5$-free graphs and Mantel's theorem, respectively; see also~\cite{LPS}. For $r=0$, Li, Pan and Shi~\cite{LPS} posed the following problem.

\begin{problem}[Li, Pan and Shi~{\cite{LPS}}]\label{prob:extremal}
Describe all extremal graphs for $\mathcal C_{0\bmod4}$.
\end{problem}

In this paper, we characterize all extremal $\mathcal C_{0\bmod4}$-free graphs, thus resolving Problem~\ref{prob:extremal}. Note that the extremal graphs constructed by Gy\H{o}ri et al.~\cite{GLSTVZ} in Theorem \ref{thm:known-general} have cut vertices for every $n\ge14$. This motivates the study of the corresponding extremal problem for $2$-connected graphs. Recently, Chu, Park and Ryu solved this problem by proving a sharp upper bound.

\begin{theorem}[Chu, Park and Ryu~\cite{CPR}]\label{thm2}
Let $G$ be a $2$-connected graph of order $n$. If $G$ contains no cycle of length divisible by 4, then
\begin{equation}\label{eq:bound-b}
e(G)\le \left\lfloor\frac{3n-1}{2}\right\rfloor.
\end{equation}
\end{theorem}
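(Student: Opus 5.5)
The plan is to prove Theorem~\ref{thm2} by induction on $n$ via an ear decomposition, proving the equivalent statement that a $2$-connected $\mathcal C_{0\bmod4}$-free graph satisfies $e(G)\le (3n-1)/2$. Every $2$-connected graph other than a cycle admits an open ear decomposition $G=C\cup P_1\cup\dots\cup P_k$, where each $P_i$ is a path whose ends lie in the previous graph and whose interior is new. Writing $\ell_i$ for the number of internal vertices of $P_i$, we get $n=|C|+\sum\ell_i$ and $e(G)=|C|+\sum(\ell_i+1)$. Hence $e(G)-n=k$, and it suffices to prove
\[
k\le \frac{n-1}{2}.
\]
This says that ears are, on average, ``expensive''. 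The base case of a single cycle is immediate, since a cycle must have length at least $3$ and not divisible by $4$, so $e=n\le(3n-1)/2$.

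The key step is to show that ears of length one (chords, $\ell_i=0$) are rare and must be paid for by long ears elsewhere. The local facts I would establish first are these.
\begin{itemize}
\item A theta subgraph with three internally disjoint $u$--$v$ paths of lengths $a,b,c$ forces $a+b,\,b+c,\,a+c\not\equiv0\pmod 4$. Hence among $a,b,c$ modulo $4$ at most one residue class can be ``paired'', which severely restricts the structure.
\item Any two chords in a cycle, whether crossing or parallel, create a cycle of length $0\bmod 4$ unless the segment lengths satisfy specific congruences. A case check on the residues shows that the pattern of a $K_4$-subdivision, or of a cycle with two chords, is essentially unique.
\end{itemize}
From these facts I would derive a charging scheme. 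Each ear with $\ell_i=0$ receives charge $1$, while each internal vertex contributes charge $2$. The goal is $\sum(\text{charge of ear}) \le \sum(\text{internal vertices and initial cycle})$, with a deficit of $1$ allowed.

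To make the induction clean, I would instead pick a single ear $P$ and delete it. This requires choosing the decomposition so that the last ear is short: take a maximal proper $2$-connected subgraph $H$ such that $G-E(P)-\mathrm{int}(P)=H$ with $|\mathrm{int}(P)|=\ell$. If $\ell\ge1$, then $e(G)=e(H)+\ell+1\le (3(n-\ell)-1)/2+\ell+1$, which is at most $(3n-1)/2$ exactly when $\ell\ge2$. So the bad cases are $\ell=0$ (a chord) and $\ell=1$ (a path of length $2$). In those cases I would instead remove a larger configuration, namely the ear together with a neighbouring ear or a whole ``block-like'' gadget such as a $K_4^-$ or a theta piece. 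The goal is that the removed piece contains $t$ new vertices and at most $3t/2$ new edges while $2$-connectivity of the remainder is preserved. This mirrors the $19/12$ argument of Theorem~\ref{thm:known-general}, where the extremal pieces are small dense blocks.

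The main obstacle is this reducibility step. One must show that whenever every removable ear is short, the graph contains a removable dense-but-not-too-dense gadget. Concretely, one has to classify, using the mod-$4$ theta constraints above, the $2$-connected $\mathcal C_{0\bmod4}$-free graphs in which every ear decomposition ends with ears of length $\le2$, and verify the ratio $3/2$ for each resulting gadget, including the parity/floor bookkeeping when $n$ is even. I would first try to show that a chord ear forces, via the theta lemma, a pendant structure isomorphic to a subdivided $K_4$ of bounded size, which can then be removed as a unit.
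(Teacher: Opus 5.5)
The paper does not prove Theorem~\ref{thm2}. It is quoted from \cite{CPR} and then used as a black box, for instance in Lemmas~\ref{lem:reduce} and~\ref{lem:cap}. So your proposal has to stand on its own, and as written it has a genuine gap.

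The only reduction you actually carry out is deleting a last ear with at least two internal vertices. With $d=3v-1-2e$, this gives $d(G)=d(H)+\ell-2\ge0$. The cases of a chord or a $2$-edge ear, which you correctly single out, are the whole content of the theorem. You defer them to an unspecified ``reducibility step'', and the charging scheme is never defined.

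Your two local facts do not fill this in. The theta condition just restates $\mathcal C_{0\bmod4}$-freeness for the three cycles of a theta. The claimed uniqueness of admissible $K_4$-subdivision patterns is unsupported: $T(x,y)$ plus an $(x,y)$-path of length $2$, or of length $3$, already gives two different admissible patterns, and both occur in $L_8$.

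Nor can the bad case be avoided at the extremal graphs. $L_8$ and $L_{13}$ have no adjacent vertices of degree $2$, so the last ear of any open ear decomposition of either is a thread of length at most $2$. At the same time $d(L_8)=1$ and $d(L_{13})=0$, so there is almost no slack.

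The missing idea is that the hypothesis $d\ge0$ is not closed under the decompositions such a step must use, so the induction has to be strengthened. Two examples show the loss:
\begin{itemize}
\item If a $2$-cut $\{x,y\}$ with $xy\in E(G)$ splits $G$ into sides $N_1,N_2$, each containing $xy$, then $d(G)=d(N_1)+d(N_2)-3$.
\item If deleting a thread with $\ell$ internal vertices leaves a chain of blocks $B_1,\dots,B_m$, then $d(G)=\sum_i d(B_i)-2m+\ell$.
\end{itemize}
Knowing only $d\ge0$ for the pieces yields nothing in either case.

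What you need are rooted bounds in which the lower bound on $d(B)$ depends on $\mathcal R(B;a,b)$, as in Lemma~\ref{lem:cap} (compare the $L$-type pairs of \cite{CPR}). Namely, $d\ge1$ for $\{1,2\}$, $d\ge2$ for $\{0,1\}$ or $\{2,3\}$, $d\ge3$ for $\{0,3\}$, and $d\ge3$ for $\{1,2\}$ with $ab\notin E(B)$. These combine with two further facts: three mixed-parity pairs already sum to $\mathbb Z/4\mathbb Z$, and $2$-connected bipartite pieces have $d\ge5$. The rooted bounds must be proved simultaneously with the theorem. The paper obtains Lemma~\ref{lem:cap} by applying Theorem~\ref{thm2} to larger graphs, so you cannot borrow it inside an induction on $n$.

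Finally, your plan of cutting off a bounded subdivided $K_4$ does not apply as described when the thread-suppressed core is $3$-connected. In that case every $2$-vertex cut separates a single vertex. In that regime the paper's own analysis (Lemmas~\ref{lem:core}--\ref{lem:pentagons}) gets its deficit bound globally, from planarity, Euler's formula and the restrictions on face lengths, not from a local deletion.
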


Chu, Park and Ryu~\cite{CPR} also gave a $2$-connected $n$-vertex $\mathcal C_{0\bmod4}$-free graph attaining \eqref{eq:bound-b} for every $n\ge12$.  For each $n\ge6$, Bai, Chu, Li, Park and Ryu~\cite{BCLPR} determined the maximum size and characterized the unique extremal graph among $2$-connected $n$-vertex $\mathcal C_{2\bmod4}$-free graphs. For $\mathcal C_{1\bmod4}$ and $\mathcal C_{3\bmod4}$, the extremal numbers coincide with those in the unrestricted setting for $n\ge3$ and $n\ge4$, respectively.  However, the problem of characterizing all $2$-connected graphs attaining the bound in Theorem~\ref{thm2} remained open. Thus, our second result characterizes all $2$-connected $\mathcal C_{0\bmod4}$-free graphs attaining the upper bound in (2).

\begin{figure}[htbp]
\centering
\begin{tikzpicture}[
  x=1cm,y=1cm,line width=.65pt,
  every node/.style={font=\small},
  vertex/.style={
    circle,fill=black,inner sep=0pt,minimum size=3.2pt
  },
  every label/.append style={font=\small,inner sep=1pt},
  label distance=2pt
]

\begin{scope}[shift={(1.8,0)}]
  \coordinate (T-x) at (0,2.4);
  \coordinate (T-a) at (-.85,1.8);
  \coordinate (T-b) at (-.85,.6);
  \coordinate (T-y) at (0,0);
  \coordinate (T-c) at (.85,.6);
  \coordinate (T-d) at (.85,1.8);

  \draw (T-x)--(T-a)--(T-b)--(T-y)--(T-c)--(T-d)--cycle
        (T-b)--(T-c);

  \node[vertex,label=above:$x$] at (T-x) {};
  \node[vertex,label=left:$a$] at (T-a) {};
  \node[vertex,label=left:$b$] at (T-b) {};
  \node[vertex,label=below:$y$] at (T-y) {};
  \node[vertex,label=right:$c$] at (T-c) {};
  \node[vertex,label=right:$d$] at (T-d) {};
  \node[font=\normalsize] at (0,-.83) {$T(x,y)$};
\end{scope}

\begin{scope}[shift={(6,0)}]
  \coordinate (S-a) at (-1.15,0);
  \coordinate (S-b) at (1.15,0);
  \coordinate (S-c) at (0,2.3);
  \coordinate (S-z) at (0,.8);
  \coordinate (S-u) at (-.575,.4);
  \coordinate (S-v) at (.575,.4);
  \coordinate (S-w) at (0,1.3);

  \draw (S-a)--(S-b)--(S-c)--cycle
        (S-z)--(S-u)--(S-a)
        (S-z)--(S-v)--(S-b)
        (S-z)--(S-w)--(S-c);

  \node[vertex,label=below left:$a$] at (S-a) {};
  \node[vertex,label=below right:$b$] at (S-b) {};
  \node[vertex,label=above:$c$] at (S-c) {};
  \node[vertex,label=below:$z$] at (S-z) {};
  \node[vertex,label=below:$u$] at (S-u) {};
  \node[vertex,label=below:$v$] at (S-v) {};
  \node[vertex,label=right:$w$] at (S-w) {};
  \node[font=\normalsize] at (0,-.83) {$S_7$};
\end{scope}

\begin{scope}[shift={(10.7,0)}]
  \coordinate (L8-x) at (0,2.4);
  \coordinate (L8-a) at (-1.5,1.95);
  \coordinate (L8-b) at (-1.5,.45);
  \coordinate (L8-y) at (0,0);
  \coordinate (L8-c) at (-.6,.8);
  \coordinate (L8-d) at (-.6,1.7);
  \coordinate (L8-r) at (1.05,.75);
  \coordinate (L8-s) at (1.5,2.4);

  \draw (L8-x)--(L8-a)--(L8-b)--(L8-y)
        --(L8-c)--(L8-d)--cycle
        (L8-b)--(L8-c)
        (L8-x)--(L8-r)--(L8-s)--cycle
        (L8-r)--(L8-y);

  \foreach \v in {a,b,c,d}
    \node[vertex] at (L8-\v) {};
  \node[vertex,label=above:$x$] at (L8-x) {};
  \node[vertex,label=below:$y$] at (L8-y) {};
  \node[vertex,label=right:$r$] at (L8-r) {};
  \node[vertex,label=above:$s$] at (L8-s) {};
  \node[font=\normalsize] at (0,-.83) {$L_8$};
\end{scope}

\begin{scope}[shift={(3.65,-4.85)}]
  \coordinate (L13-x) at (0,2.8);
  \coordinate (L13-y1) at (-2.8,0);
  \coordinate (L13-y2) at (2.8,0);
  \coordinate (L13-p) at (-.7,0);
  \coordinate (L13-q) at (.7,0);

  \coordinate (L13-a1) at (-1.4,2.8);
  \coordinate (L13-b1) at (-2.8,1.4);
  \coordinate (L13-c1) at (-2.35,.95);
  \coordinate (L13-d1) at (-.95,2.35);

  \coordinate (L13-a2) at (1.4,2.8);
  \coordinate (L13-b2) at (2.8,1.4);
  \coordinate (L13-c2) at (2.35,.95);
  \coordinate (L13-d2) at (.95,2.35);

  \draw (L13-x)--(L13-a1)--(L13-b1)--(L13-y1)
        --(L13-c1)--(L13-d1)--cycle
        (L13-b1)--(L13-c1);

  \draw (L13-x)--(L13-a2)--(L13-b2)--(L13-y2)
        --(L13-c2)--(L13-d2)--cycle
        (L13-b2)--(L13-c2);

  \draw (L13-y1)--(L13-p)--(L13-q)--(L13-y2)
        (L13-x)--(L13-p)
        (L13-x)--(L13-q);

  \foreach \v in {a1,b1,c1,d1,a2,b2,c2,d2}
    \node[vertex] at (L13-\v) {};
  \node[vertex,label=above:$x$] at (L13-x) {};
  \node[vertex,label=below:$y_1$] at (L13-y1) {};
  \node[vertex,label=below:$y_2$] at (L13-y2) {};
  \node[vertex,label=below:$p$] at (L13-p) {};
  \node[vertex,label=below:$q$] at (L13-q) {};
  \node[font=\normalsize] at (0,-.83) {$L_{13}$};
\end{scope}

\begin{scope}[shift={(10.5,-4.85)}]
  \coordinate (R-a) at (-1.15,2.3);
  \coordinate (R-b) at (1.15,2.3);
  \coordinate (R-c) at (1.15,0);
  \coordinate (R-d) at (-1.15,0);
  \coordinate (R-h) at (0,1.15);
  \coordinate (R-s) at (1.15,1.15);
  \coordinate (R-t) at (-1.15,1.15);
  \coordinate (R-u) at (-.575,1.725);
  \coordinate (R-v) at (.575,1.725);

  \draw (R-a)--(R-b)--(R-s)--(R-c)
        --(R-d)--(R-t)--cycle
        (R-h)--(R-u)--(R-a)
        (R-h)--(R-v)--(R-b)
        (R-h)--(R-c)
        (R-h)--(R-d);

  \foreach \v in {s,t,u,v}
    \node[vertex] at (R-\v) {};
  \node[vertex,label=above left:$a$] at (R-a) {};
  \node[vertex,label=above right:$b$] at (R-b) {};
  \node[vertex,label=below right:$c$] at (R-c) {};
  \node[vertex,label=below left:$d$] at (R-d) {};
  \node[vertex,label=below:$h$] at (R-h) {};
  \node[font=\normalsize] at (0,-.83) {$R_9$};
\end{scope}

\end{tikzpicture}
\caption{The graphs $T(x,y),S_7,L_8,L_{13}$ and $R_9$.}
\label{fig:special-graphs}
\end{figure}

Our first result is as follows. The class $\mathcal G$ is constructed from $K_1,K_2,K_3,S_7,L_8,R_9,L_{13}$ by three operations, specified in Definition~\ref{def:class-G}.  The graphs $T(x,y)$, $S_7$, $L_8$, $L_{13}$ and $R_9$ used in our constructions are shown in Figure~1.

\begin{theorem}\label{thm:main}
Let $G$ be a nonempty $\mathcal C_{0\bmod4}$-free graph. Then $G$ is extremal if and only if $G\in\mathcal G$.
\end{theorem}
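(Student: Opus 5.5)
We prove the two directions separately. Throughout, write $f(n)=\floor{19(n-1)/12}$. By Theorem~\ref{thm:known-general}, an $n$-vertex $\mathcal C_{0\bmod4}$-free graph is extremal exactly when it has $f(n)$ edges.

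\textbf{Sufficiency.} We show that every $G\in\mathcal G$ is $\mathcal C_{0\bmod4}$-free and has $f(v(G))$ edges, by induction along the three operations of Definition~\ref{def:class-G}.

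For the base graphs $K_1,K_2,K_3,S_7,L_8,R_9,L_{13}$ this is a finite check. Their edge counts are $0,1,3,9,11,12,19$, and these equal $f(1),f(2),f(3),f(7),f(8),f(9),f(13)$. Absence of cycles of length $4,8,12$ is verified directly.

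For the operations, the key facts are these:
\begin{itemize}
\item Gluing blocks at a cut vertex creates no new cycles, since every cycle lies in a single block.
\item A Whitney switch at a $2$-cut preserves the cycle-length multiset.
\item Substituting or attaching a gadget such as $T(x,y)$ at a pair $x,y$ adds only cycles whose lengths can be read off from the $x$--$y$ path lengths in the gadget. In $T(x,y)$ these are $2$ and $3$ (and $5$ via $b$--$c$), so we check that no resulting length is divisible by $4$.
\end{itemize}
The edge count then follows from additivity: $f(a)+f(b)\le f(a+b-1)$, with equality exactly under the residue conditions on $a-1,b-1 \pmod{12}$. We expect the definition to impose precisely these conditions.

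\textbf{Necessity.} Let $G$ be extremal on $n$ vertices. We first show $G$ is connected, since otherwise adding a bridge keeps it $\mathcal C_{0\bmod4}$-free and increases the size. We then decompose $G$ into blocks $B_1,\dots,B_k$, so that $e(G)=\sum e(B_i)$ and $n-1=\sum (v(B_i)-1)$.

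The core lemma is a bound on each block $B$ that is sharper than $f$. Writing $m=v(B)-1$, we want $e(B)\le 19m/12$, with equality only for $B\in\{K_2,S_7,L_{13}\}$ (at $19/12$ ratios: $1$ per $1$, $9$ per $6$ is less\ldots). Concretely, we define the deficiency $d(B)=19m/12-e(B)\ge0$. We then show that $\sum d(B_i)<1$ is forced by extremality, since $19(n-1)/12-f(n)<1$. This leaves only finitely many possible block types, each with an explicitly computed deficiency.

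To bound the $2$-connected blocks we use Theorem~\ref{thm2}: $e(B)\le\floor{(3v(B)-1)/2}$. This gives a deficiency at least $(19m/12)-(3m+2)/2=(m-24)/12$ up to floors, so large blocks have large deficiency. Hence every non-bridge block has at most roughly $36$ vertices. The remaining small blocks must be classified by their exact deficiency. This amounts to determining all $2$-connected $\mathcal C_{0\bmod4}$-free graphs of order at most about $36$ whose size is within $1$ of $19m/12$. That classification should output $K_3,S_7,L_8,R_9,L_{13}$ together with the Whitney-switch and $T(x,y)$-variants.

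\textbf{Main obstacle.} The hard step is this small-block classification. We plan to proceed via ear decompositions: in a $\mathcal C_{0\bmod4}$-free $2$-connected graph, the ears attached to a cycle have constrained lengths modulo $4$, and theta subgraphs have path lengths restricted to a few residue patterns. The $2$-connected characterization (the paper's second theorem) presumably supplies exactly this, and we would invoke it for blocks.

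Finally, we solve the combinatorial packing problem: find all multisets of blocks with total deficiency $<1$ and the correct $n-1 \pmod{12}$. We show that these correspond exactly to the gluing rules of $\mathcal G$.
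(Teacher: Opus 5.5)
There is a genuine gap, and it is exactly the step you flag as the main obstacle. Your set-up is fine and agrees with the paper: an extremal graph is connected, your block deficiency $19m/12-e(B)$ equals $D(B)/12$ and is additive over blocks, and extremality is equivalent to $\sum_B D(B)\le 11$. Everything after that, however, is the actual theorem, and the proposal only says what the classification ``should output.''

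Invoking Theorem~\ref{thm:b} does not close this. It describes only blocks with $d(B)=3v(B)-1-2e(B)\in\{0,1\}$. Extremal blocks can also have $d(B)=2$ (for instance $K_3$, $S_7$, $R_9$, of order at most $11$) or $d(B)=3$ (the theta $T(x,y)$ itself, with $D(T)=11$). Moreover, in the paper Theorem~\ref{thm:b} is itself derived from the lemma you are missing.

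That lemma is Lemma~\ref{lem:reduce}: a 2-connected $\mathcal C_{0\bmod4}$-free graph with $D\le 11$ is, up to Whitney switches, one of $K_3,S_7,L_8,R_9,L_{13}$, or it has two adjacent degree-2 vertices forming a 3-edge ear. Proving it needs genuine structure that a vague appeal to ear decompositions does not provide. Triangle-star deletion forces $S_7$, since otherwise $d\ge4$. The good 2-cut analysis shows that the two extreme sides are triangles or copies of $T$, and that at most two extra edges join the connecting paths; this forces $L_8$ or the family $\mathcal Z$, which is switching-equivalent to $L_{13}$. When no good cut exists, the weighted-dual face analysis forces $R_9$.

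Even a complete list of extremal blocks would not match $\mathcal G$ through a ``packing problem.'' The list is not the five seeds plus a few variants: already $T(x,y)$, and the 10-vertex graph $L_8$ plus the ear $xuvy$ in Figure~\ref{fig:construction-examples}, are extremal blocks. Nor is $\mathcal G$ a gluing of blocks from a fixed list. Rule G3 may be applied to a graph with cut vertices and fuse several blocks into one; for example, $T(x,y)$ is a triangle with a pendant edge plus one 3-edge ear. The induction must therefore run through ear deletion. Removing the two degree-2 vertices leaves a connected, not necessarily 2-connected, extremal graph with $D$ lowered by $2$ (Lemma~\ref{lem:bookkeeping}(ii)), and the condition $1\notin\mathcal R(H;a,b)$ then holds automatically.

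Several details are also off:
\begin{enumerate}
\item G3 is this 3-edge ear addition, not a $T$-gadget substitution.
\item The $(x,y)$-paths in $T$ have lengths $3$ and $4$, not $2,3,5$.
\item Theorem~\ref{thm2} gives deficiency at least $(m-12)/12$, not $(m-24)/12$, hence $v(B)\le 24$.
\item Equality $e(B)=19m/12$ holds only for blocks of order $13$; note $D(K_2)=7$ and $D(S_7)=6$.
\end{enumerate}

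Sufficiency is easier than you make it. The definition already requires $D\le 11$ for every output, so only preservation of $\mathcal C_{0\bmod4}$-freeness needs checking, and for G3 that is Lemma~\ref{lem:sound}.
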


To show that our constructions in the $2$-connected case include all extremal graphs, we prove that, up to Whitney switches, every graph attaining the bound is either one of two initial graphs or can be obtained from smaller graphs attaining the same bound. The class $\mathcal B$ is constructed from $L_8$ and $L_{13}$ by the four operations in Definition~\ref{def:class-B}. For convenience, the 2-connected $\mathcal C_{0\bmod4}$-free graphs attaining  equality in \eqref{eq:bound-b} are called \emph{bound-tight}.

\begin{theorem}\label{thm:b}
Let $G$ be a 2-connected $\mathcal C_{0\bmod4}$-free graph. Then $G$ is bound-tight if and only if $G\in\mathcal B$.
\end{theorem}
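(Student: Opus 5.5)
We prove the two directions of Theorem~\ref{thm:b} separately, arguing by induction on $n=v(G)$ for the harder direction.

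\emph{Sufficiency.} Every member of $\mathcal B$ must be shown to be 2-connected, $\mathcal C_{0\bmod4}$-free, and to have exactly $\floor{(3n-1)/2}$ edges. For $L_8$ and $L_{13}$ this is a direct check: they have $11$ and $19$ edges respectively, and a finite list of cycles to inspect.

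For each of the four operations of Definition~\ref{def:class-B}, we then verify three things.
\begin{itemize}
\item 2-connectivity is preserved. Gluing along an edge or a pair of vertices, or subdividing inside a block, keeps the graph 2-connected.
\item The edge count rises by the right amount. If the operation adds $k$ vertices and $m$ edges, we need $m=\floor{(3(n+k)-1)/2}-\floor{(3n-1)/2}$, which is routine parity arithmetic.
\item No cycle of length $0\bmod4$ is created. Every new cycle passes through the gluing set $\{x,y\}$ and therefore splits into two $x$--$y$ paths, one in each part. So it suffices to record, for each piece, the set of residues mod $4$ of $x$--$y$ path lengths, and to check that no residue from one piece plus a residue from the other is $\equiv 0\pmod 4$.
\end{itemize}
The third check is also where Whitney switches enter. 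A switch at a 2-cut preserves the multiset of cycle lengths, so the whole class is closed under switching. The gadget $T(x,y)$ is exactly the piece whose $x$--$y$ path lengths are $\{2,3,5\}$ (paths of length $2$, $3$ and $5$), which is why it is used.

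\emph{Necessity.} Let $G$ be bound-tight of order $n$, and assume the statement holds for all smaller orders. We use two reductions.
\begin{itemize}
\item \emph{Degree-two vertices.} Since $e(G)\approx 3n/2$, the average degree is about $3$, so low-degree structure must appear. If $G$ has a degree-$2$ vertex on a suitable ear or a short path between two vertices, deleting it or suppressing it should give a smaller 2-connected $\mathcal C_{0\bmod4}$-free graph. Theorem~\ref{thm2} applied to the smaller graph, compared with $e(G)$, forces the smaller graph to be bound-tight as well, and $G$ is then recovered by one of the operations.
\item \emph{2-cuts.} Let $\{x,y\}$ be a 2-cut of $G$. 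Split $G$ into two sides $G_1,G_2$, each augmented by the edge $xy$ or by an $x$--$y$ path realizing a suitable residue. Applying Theorem~\ref{thm2} to both sides gives $e(G)\le \floor{(3n_1-1)/2}+\floor{(3n_2-1)/2}-c$, where $c$ is the correction for the shared vertices and the added edge. Tightness then forces both sides to be bound-tight with specified parities, and the residue sets of $x$--$y$ paths must fit together as in the operations. Up to Whitney switches at $\{x,y\}$, $G$ is then the corresponding composition of smaller members of $\mathcal B$.
\end{itemize}
If neither reduction applies, $G$ is 3-connected, or nearly so. We then show that a 3-connected $\mathcal C_{0\bmod4}$-free graph with this many edges is too small to exist, or is one of the base graphs. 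One route is an ear-decomposition or theta-subgraph argument: three internally disjoint paths between two vertices, with lengths chosen to avoid $0\bmod4$ sums, force a contradiction once $n$ is large. The remaining small cases ($n\le 13$) are checked directly and yield only $L_8$ and $L_{13}$.

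\emph{Main obstacle.} The hardest step is the 2-cut decomposition. We must decide exactly which augmentations of $G_1$ and $G_2$ remain $\mathcal C_{0\bmod4}$-free, which depends on the residues of their $x$--$y$ paths, and must match each equality case with one of the four operations. The approach is to first classify the possible residue sets of $x$--$y$ path lengths in a 2-connected $\mathcal C_{0\bmod4}$-free graph; since no two paths may sum to $0\bmod4$ within one side, these sets are very restricted. With that classification in hand, the case analysis is organized by these residue sets.
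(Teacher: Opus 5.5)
Your sufficiency outline is essentially Lemma~\ref{lem:b-soundness} and can be completed, with two corrections. For B3 and B4, a new cycle consists of the ear, a $(v,x)$-path in $B_1$ and a $(v,y)$-path in $B_2$, so the check is $3+\{1,2\}+\{1,2\}=\{1,2,3\}$. Also, $T(x,y)$ has terminal paths of lengths $3$ and $4$, i.e.\ residues $\{0,3\}$, not $\{2,3,5\}$.

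The necessity direction has genuine gaps. The first is the degree-2 reduction. Deleting a single degree-2 vertex raises $d$ by $1$, and suppressing it lowers $d$ by $1$ and can create cycles of length $0\bmod4$. So only the interior $p,q$ of a 3-edge ear $apqb$ can be removed with $d$ preserved. But $H=G-\{p,q\}$ is in general \emph{not} 2-connected; every output of B3 or B4 loses 2-connectivity exactly this way. Your argument (Theorem~\ref{thm2} forcing $H$ to be bound-tight) therefore covers only the case where $H$ happens to be 2-connected, which is B2.

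In general you need to analyse the block chain $B_1,\dots,B_m$ of $H$ from $a$ to $b$. The tools are $d(G)=2+\sum_i(d(B_i)-2)$ and $1\notin\sum_i\mathcal R(B_i;x_i,y_i)$, together with a lemma tying root residue sets to deficits (Lemma~\ref{lem:cap}):
\begin{itemize}
\item $\{1,2\}$ forces $d\ge1$, and $d\ge3$ unless the roots are adjacent;
\item $\{0,1\}$ and $\{2,3\}$ force $d\ge2$;
\item $\{0,3\}$ forces $d\ge3$;
\item $d=0$ forces residue $0$.
\end{itemize}
This analysis shows that at most two blocks are nonbipartite and that the cases are exactly B2, B3 and B4. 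In the case $(d(B_1),d(B_2))=(1,2)$ the block $B_2$ is not bound-tight at all, and one must attach the cap $S_7(v,y)$ to manufacture a bound-tight input $K$. Your plan anticipates none of this, so it cannot explain why B3 and B4 arise.

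The second gap is the 2-cut step and the base case. None of B1--B4 glues two graphs along a 2-cut, so splitting $G$ at $\{x,y\}$ into smaller bound-tight pieces does not show $G\in\mathcal B$, and you give no way to simulate such a gluing. Moreover, $L_8$ and $L_{13}$ have 2-cuts and degree-2 vertices. Your terminal case (``$G$ is 3-connected, or nearly so'') can never produce them, while your reduction steps cannot reduce them.

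What must actually be proved is Lemma~\ref{lem:primitive-b}: if $d(G)\le1$ and no graph switching-equivalent to $G$ has adjacent degree-2 vertices, then $G$ is switching-equivalent to $L_8$ or $L_{13}$. The hypothesis has to be imposed on the whole switching orbit, because switches are used to create such pairs. This is the bulk of the paper:
\begin{itemize}
\item triangle-stars are excluded by Lemma~\ref{lem:triangle-star};
\item with a good cut, an extremal choice of good cuts shows that $G$ consists of two terminal pieces (triangles or copies of $T$), joined by two paths with at most two edges between them (Lemmas~\ref{lem:terminal-side} and~\ref{lem:structure}); counting then forces $v(G)\in\{8,13\}$, giving $L_8$ or $L_{13}$ up to switching;
\item without a good cut, suppression gives a 3-connected planar graph with at most one triangular face and at most four pentagonal faces, so Euler's formula gives $d(G)\ge2$.
\end{itemize}
Your proposed theta argument ``once $n$ is large'', plus a direct check of $n\le13$, provides neither a bound on $n$ nor a substitute for these steps.
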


The remainder of the paper is organized as follows. Section~\ref{sec:preliminaries} collects the preliminary results needed for our proofs. We prove Theorems~\ref{thm:main} and~\ref{thm:b} in Sections~\ref{sec:completeness} and~\ref{sec:two-connected}, respectively.

\section{Preliminaries}\label{sec:preliminaries}

For $S\subseteq V(G)$, let $G[S]$ be the subgraph induced by $S$
and $G-S=G[V(G)\setminus S]$.  For distinct vertices $x,y\in V(G)$ and a subgraph $H$ of $G$, we write $G-x$ for $G-\{x\}$ and $G-H$ for $G-V(H)$.
The graph $G+xy$ is obtained by adding $xy$ if it is absent.
The neighborhood and degree of $x$ are denoted by $N_G(x)$ and $\deg_G(x)$, respectively. The minimum degree of $G$ is denoted by $\delta(G)$.

A \emph{block} is a maximal 2-connected subgraph or a bridge, regarded as $K_2$.
For a connected graph with order $\ge 2$, its \emph{block-cut tree} is the bipartite graph whose vertices are its blocks and cut vertices, with adjacency given by containment. A \emph{terminal block} contains at most one cut vertex. If the graph has more than one block, its terminal blocks are precisely the blocks corresponding to leaves of its block-cut tree; see~\cite{Diestel}.

The length of a path or cycle $P$ is denoted by $\ell(P)$. For $a,b\in V(P)$, the subpath of a path $P$ between $a$ and $b$ is denoted by $P[a,b]$. 
An \emph{$H$-ear} is a path whose only vertices in $H$ are its distinct ends and whose edges lie outside $H$. For distinct vertices $a,b\in V(H)$, a path with ends $a$ and $b$ is called an $(a,b)$-path. The set of residues of the lengths of $(a,b)$-paths in $H$ is denoted by
\[
 \mathcal R(H;a,b)=
 \{\ell(P)\bmod4:P\text{ is an }(a,b)\text{-path in }H\}.
\]
All residues and sums of residue sets are taken modulo 4. A \emph{mixed-parity pair} consists of one even and one odd residue.
For a set $L\subseteq\{0,1,2,3\}$, the pair $(a,b)$ is of \emph{$L$-type} if $\mathcal R(H;a,b)\subseteq L$~\cite{CPR}. The set $L$ may contain residues that do not occur.
Let $\{x,y\}$ be a 2-vertex cut of a 2-connected block $G$.
Let $P$ be a component of $G-\{x,y\}$. The graph $G'$ is obtained from $G$ by first removing all edges between $\{x,y\}$ and $P$, and then adding the edges in
\[
 \{xz:yz\in E(G),\ z\in V(P)\}
 \cup
 \{yz:xz\in E(G),\ z\in V(P)\}.
\]
This operation is called a \emph{Whitney switch}, or simply a \emph{switch}.
This classical switching operation preserves the 2-connectivity of the block \cite[pp.~44--46]{Truemper}.
It also preserves $\mathcal C_{0\bmod4}$-freeness by \cite[Lemma~10]{GLSTVZ}. Two graphs are \emph{switching-equivalent} if one can be obtained from the other by a sequence of Whitney switches. The \emph{switching orbit} of a graph consists of all graphs switching-equivalent to it.

We will use the following results.

\begin{lemma}[Gy\H{o}ri et al.~{\cite{GLSTVZ}}]
\label{lem:planar}
Every $\mathcal C_{0\bmod4}$-free graph is planar.
\end{lemma}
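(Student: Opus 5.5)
The plan is to reduce planarity to Kuratowski's theorem. It suffices to show that every subdivision of $K_5$ and every subdivision of $K_{3,3}$ contains a cycle whose length is divisible by $4$. Then a nonplanar graph would contain such a subdivision, and hence a $\mathcal C_{0\bmod4}$ cycle. Since cycles in a subdivision $H$ of a graph $F$ correspond to cycles of $F$, it is enough to work with a parity/residue argument on edge lengths. We assign to each edge $e$ of $F$ the length $\ell_e\ge1$ of its subdividing path, and we must show that some cycle $C$ of $F$ has $\sum_{e\in C}\ell_e\equiv0\pmod 4$.

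For $K_{3,3}$, I would use its many $4$-cycles and $6$-cycles. Label the sides $\{a_1,a_2,a_3\}$ and $\{b_1,b_2,b_3\}$, and write $x_{ij}=\ell_{a_ib_j}\bmod 4$. For any two rows $i,i'$ and two columns $j,j'$, the $4$-cycle $a_ib_ja_{i'}b_{j'}$ has length $x_{ij}+x_{i'j}+x_{i'j'}+x_{ij'}$. The key step is a Zero-sum style argument, by cases on parities, using the following observation. Three even cycles whose edge sets XOR to zero have lengths summing to $2\cdot(\text{shared part})$. A cleaner approach is to choose, among the three $4$-cycles through a fixed pair of rows, two with the same length parity. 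Their symmetric difference is another cycle, of length $L_1+L_2-2s$. Concretely, I would fix rows $1,2$ and set $d_j=x_{1j}+x_{2j}$. The $4$-cycles using columns $j,j'$ have length $d_j+d_{j'}$. If all these avoid $0 \bmod 4$, then the $d_j$ values are pairwise not negatives of each other mod $4$. I would also bring in the $6$-cycles and the other row pairs and derive a contradiction by a finite case analysis over residues mod $4$. Reducing all $x_{ij}$ modulo $4$ gives a finite check, $4^9$ cases. I would prune these by symmetries (row and column permutations) and by pigeonhole on the $d_j$.

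For $K_5$, the argument is similar. Among its triangles, $4$-cycles and $5$-cycles, one should sum to $0 \bmod 4$. Alternatively, I would reduce $K_5$ to the $K_{3,3}$ case where possible. A subdivision of $K_5$ doesn't directly contain a $K_{3,3}$ subdivision, so I would do a direct argument instead. Consider the theta-subgraphs: any three internally disjoint paths between two branch vertices with lengths $p,q,r$ give cycles $p+q$, $q+r$, $p+r$. If none is $0\bmod4$, this constrains the residues strongly. For example, at most one of $p,q,r$ is even-odd mismatched, and so on. Propagating such constraints over the many thetas in $K_5$ should yield a contradiction.

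The main obstacle is organizing these finite case analyses cleanly rather than brute-forcing them. If a slick argument fails, I would fall back to a computer-verifiable enumeration of residue assignments mod $4$ for the $10$ edges of $K_5$ and the $9$ edges of $K_{3,3}$, checking every cycle.
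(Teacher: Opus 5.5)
Your reduction is sound, and both finite claims it needs are true. (The paper proves nothing here itself; it quotes the lemma from Gy\H{o}ri et al.~\cite{GLSTVZ}.) By Kuratowski's theorem, and since cycle lengths in a subdivision depend only on the residues $\ell_e \bmod 4$, it suffices that every $\mathbb Z_4$-weighting of $K_5$ and of $K_{3,3}$ has a cycle of weight $0$. The gap is that your proposal stops exactly where the content of the lemma begins. ``Should yield a contradiction'' and ``fall back to enumeration'' are not arguments, so nothing beyond the reduction is established.

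Your theta constraint is also not meaningful as written. The correct one is: if internally disjoint $(u,v)$-paths have pairwise sums $\not\equiv0\pmod4$, then at most one is $\equiv0$, at most one is $\equiv2$, and the odd ones are all congruent.

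Triangles and $4$-cycles, which is all the thetas $uv,uwv,uw'v$ of $K_5$ see, cannot finish $K_5$. On $\{1,\dots,5\}$, keep the edges $i(i+1)$ as single edges and subdivide the five chords $i(i+2)$ into paths of length $4$. A cycle using $k$ outer edges has length $\equiv k\pmod4$, and $k=4$ is impossible. So the only cycle of length divisible by $4$ is the pentagram, of length $20$. Similarly, some weightings of $K_{3,3}$ have all nine $4$-cycles fine, so a specific $6$-cycle must be used.

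Both checks close briefly. For $K_{3,3}$, let $d_j,e_j,f_j$ be the residues of $a_1b_ja_2$, $a_1b_ja_3$, $a_2b_ja_3$.
\begin{itemize}
\item The $4$-cycles are the pairwise sums within each triple $(d_j),(e_j),(f_j)$.
\item The six $6$-cycles are exactly the transversal sums $d_{j_1}+f_{j_2}+e_{j_3}$ of the $3\times3$ array with rows $d,e,f$.
\item Every column sum $d_j+e_j+f_j=2(x_{1j}+x_{2j}+x_{3j})$ is even.
\end{itemize}
Hence each row has an odd entry, its odd entries are equal and its even entries distinct. Each column has $0$ or $2$ odd entries, so the rows contain $(2,2,2)$, $(1,2,3)$ or $(1,1,2)$ odd entries.
\begin{itemize}
\item Case $(2,2,2)$: put the even entries $\epsilon_k$ on the diagonal, with common odd value $\omega_k$ elsewhere in row $k$. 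The three transversals through exactly one diagonal entry force $\epsilon_k\equiv\omega_i+\omega_j+2$. Then $\epsilon_1+\epsilon_2+\epsilon_3\equiv0$, a forbidden cycle.
\item Case $(1,2,3)$: the only two even transversals are $\delta+\omega$ and $\delta+\omega+2$, where $\delta,\omega$ are the odd values of the rows with three and two odd entries. One of them is $\equiv0$.
\item Case $(1,1,2)$: the four even transversals must each be $\equiv2$, yet their sum is $\equiv2$, not $0$, modulo $4$.
\end{itemize}

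For $K_5$, among the four paths $uv$, $uwv$ at most two are even. So every even edge lies in at least two odd triangles, and every odd edge lies in at least one even triangle. Let $O$ be the graph of odd edges. A short degree check shows no vertex has $O$-degree $0$, $1$ or $4$. Among graphs with all degrees in $\{2,3\}$, only $C_5$ survives, so $O$ is a $5$-cycle $12345$. Let $a_i$ be the residue of $i(i+1)$ and $c_i$ that of the chord $(i-1)(i+1)$. Each triangle $(i-1)i(i+1)$ forces $c_i\equiv a_{i-1}+a_i+2$. Then the pentagram has length $\sum c_i\equiv2\sum a_i+10\equiv0\pmod4$.
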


\begin{lemma}[Gy\H{o}ri et al.~{\cite{GLSTVZ}}]
\label{lem:bipartite-bound}
Let $G$ be a bipartite $\mathcal C_{0\bmod4}$-free graph
of order $n\ge4$. Then
\[
e(G)\le\left\lfloor\frac{3(n-2)}{2}\right\rfloor.
\]
\end{lemma}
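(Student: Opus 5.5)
The statement just above is Lemma~\ref{lem:bipartite-bound}: a bipartite $\mathcal C_{0\bmod4}$-free graph of order $n\ge4$ has at most $\floor{3(n-2)/2}$ edges. The plan is to reduce to the 2-connected case and then use planarity (Lemma~\ref{lem:planar}) together with a face-length count.

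\textbf{Reduction to blocks.} We may assume $G$ is connected; otherwise we add bridges between components, which creates no cycles and only increases $e(G)$. Let the blocks be $B_1,\dots,B_k$ with orders $n_i$, so that $\sum_i (n_i-1)=n-1$. For each block we want $e(B_i)\le f(n_i)$, where $f(2)=1$, $f(n_i)=n_i$ for a cycle (which must have length $\equiv 2\pmod 4$, since $G$ is bipartite), and $f(n_i)\le \frac32(n_i-2)$ in general. Summing these bounds, and checking the small cases (a tree, or a single $C_6$ with pendant edges) directly, should give the result. I would sharpen this to: every 2-connected bipartite $\mathcal C_{0\bmod4}$-free block $B$ of order $m\ge6$ satisfies $e(B)\le \frac{3m-6}{2}$, with $e(B)=m$ when $B$ is a cycle. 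The summation then has to absorb the losses coming from cut vertices, and bridges are the worst case. The linear form $\frac32(\cdot)$ behaves well under adding $n_i-1$: each extra block contributes at most $\frac32(n_i-1)-\frac32$, which is at most $\frac32(n_i-1)$, so the global $-3$ is inherited from a single block or from the tree bound $n-1\le \frac32(n-2)$ for $n\ge4$.

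\textbf{2-connected case.} By Lemma~\ref{lem:planar}, $B$ has a plane embedding in which every face is bounded by a cycle. Since $B$ is bipartite, every face has even length, so each facial cycle has length $\equiv 2\pmod 4$; in particular it is at least $6$. Euler's formula together with $2e\ge 6f$ gives $e\le \frac32(m-2)$ immediately. That is exactly the required bound, so the argument is short.

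\textbf{Main obstacle.} The main obstacle is the bookkeeping in the block summation, in particular making sure the floor and the constant $-2$ survive when $G$ has many blocks. Combining $\frac32(n_i-2)$ over $k$ blocks gives $\frac32(n-1)-\frac32k$, which is at most $\frac32(n-2)$ for $k\ge1$. Bridge blocks contribute $1\le \frac32\cdot 1$ of the $n_i-1$ budget, and a cycle block contributes $m\le\frac32(m-2)$ for $m\ge6$, so everything fits. Finally, integrality of $e(G)$ yields the floor.
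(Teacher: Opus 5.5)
Your argument is correct. The paper does not prove this lemma; it quotes it from Gy\H{o}ri et al.~\cite{GLSTVZ}, so there is no internal proof to compare against. Your route is the standard girth bound for planar graphs: pass to blocks, then apply Euler's formula, using that every facial cycle of a 2-connected block is even and not divisible by $4$, hence of length at least $6$. It is complete.

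Two small remarks.

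First, your sentence that ``each extra block contributes at most $\frac32(n_i-1)-\frac32$'' is false for bridges. You correct this in the next clause, but the clean accounting is as follows. For connected $G$ with $b$ bridges and $c$ blocks of order at least $3$,
\[
 e(G)\le b+\sum_{i:\,B_i\ne K_2}\tfrac32(n_i-2)
 =\tfrac32(n-1)-\tfrac12 b-\tfrac32 c .
\]
This is at most $\frac32(n-2)$ as soon as $b+3c\ge3$. It fails only for trees on at most $3$ vertices, which is exactly where $n\ge4$ enters. No separate check of $C_6$ with pendant edges is needed.

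Second, the one deep ingredient you use is Lemma~\ref{lem:planar}, which the paper also imports from \cite{GLSTVZ}. For bipartite graphs you can avoid it, as the paper already does in the proof of Lemma~\ref{lem:bipartite}. Two of the four branch vertices of a $K_4$-subdivision lie in the same colour class. The three internally disjoint paths between them are all even, so they form an even theta. Among the three half-lengths of an even theta, two have the same parity, so the theta contains a cycle of length divisible by $4$. Hence a bipartite $\mathcal C_{0\bmod4}$-free graph has no $K_4$-subdivision, so no $K_4$-minor, and is therefore series--parallel and planar. With this replacement your proof uses only elementary facts plus Euler's formula.
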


\begin{lemma}[Bergougnoux et al.~\cite{BBBK}]\label{lem:parity}
For any distinct vertices $a,b$ in a nonbipartite
2-connected graph $B$, the set $\mathcal R(B;a,b)$
contains both an even residue and an odd residue.
\end{lemma}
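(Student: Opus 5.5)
The plan is to find an odd cycle in $B$ and route two vertex-disjoint paths from $a$ and $b$ onto it; the two arcs of the cycle then supply $(a,b)$-paths of both parities. Since only parity matters, this gives an even and an odd residue in $\mathcal R(B;a,b)$.

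\emph{Step 1: choose the cycle.} Since $B$ is nonbipartite, it contains an odd cycle $C$. Because $B$ is $2$-connected and contains a cycle, $B$ has at least three vertices, and $|V(C)|\ge3$.

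\emph{Step 2: two disjoint paths onto $C$.} I want two vertex-disjoint paths $P$ and $Q$ with the following properties:
\begin{itemize}
\item $P$ runs from $a$ to a vertex $u\in V(C)$, and $Q$ runs from $b$ to a vertex $w\in V(C)$, with $u\ne w$;
\item each of $P$ and $Q$ meets $C$ only in its final vertex.
\end{itemize}
A path may be trivial if its start vertex already lies on $C$. This follows from Menger's theorem, applied to the sets $X=\{a,b\}$ and $Y=V(C)$. Both sets have at least two vertices. In a $2$-connected graph, no single vertex separates $X$ from $Y$:
\begin{itemize}
\item if $a,b\notin Y$, a separating vertex would have to be a cut vertex;
\item if one of $a,b$ already lies in $Y$, the remaining condition follows from the same reasoning.
\end{itemize}
Hence there are two disjoint $X$--$Y$ paths. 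Each such path meets $Y$ only at its last vertex and meets $X$ only at its first vertex. Since the paths are disjoint, one starts at $a$ and the other at $b$, and their ends on $C$ are distinct. If $a$ and $b$ both lie on $C$, we simply take $P$ and $Q$ to be trivial, so that $u=a$ and $w=b$.

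\emph{Step 3: parity count.} Since $u\ne w$, the cycle $C$ splits into two internally disjoint $(u,w)$-arcs $A_1$ and $A_2$. Their lengths satisfy $\ell(A_1)+\ell(A_2)=\ell(C)$, which is odd, so $\ell(A_1)$ and $\ell(A_2)$ have different parities. The arcs meet $P\cup Q$ only at $u$ and $w$, by the choice in Step 2. Therefore
\[
P\cup A_i\cup Q \qquad (i=1,2)
\]
are genuine $(a,b)$-paths, with lengths $\ell(P)+\ell(Q)+\ell(A_i)$ of opposite parity. So $\mathcal R(B;a,b)$ contains one even and one odd residue.

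The only step requiring care is Step 2. There one must make sure the Menger paths start at distinct vertices of $\{a,b\}$ and end at distinct vertices of $C$, including the degenerate cases where $a$ or $b$ lies on $C$. I would handle this uniformly by the $X$--$Y$ form of Menger's theorem, using that the two paths are disjoint and each meets $X$ and $Y$ only in its endpoints.
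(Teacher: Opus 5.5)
Your proof is correct. The paper does not prove this lemma itself but cites Bergougnoux et al.; your argument is exactly the ``adjustable path'' construction (an odd cycle together with two disjoint Menger paths to distinct cycle vertices) that the paper invokes in the proof of Lemma~\ref{lem:terminal-side}. The Menger step is cleanest stated uniformly: for every vertex $s$, the graph $B-s$ is connected while $\{a,b\}\setminus\{s\}$ and $V(C)\setminus\{s\}$ are both nonempty, so no single vertex separates $\{a,b\}$ from $V(C)$.
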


The two \emph{deficits} corresponding to the general and 2-connected edge bounds are
\[
 D(G)=19(v(G)-1)-12e(G),\qquad
 d(G)=3v(G)-1-2e(G).
\]
A direct calculation gives
\(
 D(G)=v(G)-13+6d(G).
\)
For a nonempty $\mathcal C_{0\bmod4}$-free graph $G$, the general bound of Gy\H{o}ri et al.~\cite{GLSTVZ} gives $D(G)\ge0$.
Since $e(G)$ is an integer,
\[
 0\le D(G)\le11
 \quad\Longleftrightarrow\quad
 e(G)=\left\lfloor\frac{19(v(G)-1)}{12}\right\rfloor.
\]
Thus $G$ is extremal if and only if $D(G)\le11$.
If $G$ is 2-connected, inequality~\eqref{eq:bound-b}
gives $d(G)\ge0$.
If $G$ is bipartite with order $v(G)\ge4$,
Lemma~\ref{lem:bipartite-bound} gives
\[
 d(G)\ge3v(G)-1-3(v(G)-2)=5.
\]

\begin{lemma}\label{lem:sound}
Let $G$ be a $\mathcal C_{0\bmod4}$-free graph. If $G'$ is obtained from $G$ by adding an ear of length $k\ge2$ with distinct ends $a,b\in V(G)$, then $G'$ is $\mathcal C_{0\bmod4}$-free if and only if
$(-k)\bmod4\notin\mathcal R(G;a,b)$.
Moreover,
\[
 D(G')-D(G)=7k-19,\qquad
 d(G')-d(G)=k-3.
\]
\end{lemma}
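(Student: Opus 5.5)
The plan is to handle the two assertions of Lemma~\ref{lem:sound} separately: first the freeness criterion, then the two deficit identities.

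Let $Q$ be the added ear, with ends $a,b$ and length $k$. Its $k-1$ internal vertices are new, and its $k$ edges are new. The key observation is that every cycle of $G'$ either avoids all internal vertices of $Q$, or contains all of $Q$. Indeed, an internal vertex of $Q$ has degree $2$ in $G'$ and both its neighbours lie on $Q$. So a cycle through one internal vertex must traverse the whole of $Q$ from $a$ to $b$.

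For the freeness criterion, treat the two kinds of cycles in turn.
\begin{itemize}
\item Cycles avoiding the interior of $Q$ are cycles of $G$. Since $G$ is $\mathcal C_{0\bmod4}$-free, none of them has length divisible by $4$.
\item A cycle containing $Q$ is exactly $Q\cup P$, where $P$ is an $(a,b)$-path in $G$. Here $a\neq b$, and $P$ is internally disjoint from $Q$ automatically. Its length is $k+\ell(P)$.
\end{itemize}
Conversely, every $(a,b)$-path $P$ in $G$ yields such a cycle. Hence $G'$ contains a cycle of length $0\bmod 4$ if and only if some $(a,b)$-path in $G$ has length $\equiv -k \pmod 4$. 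This holds if and only if $(-k)\bmod4\in\mathcal R(G;a,b)$, which gives the equivalence.

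For the deficits, use $v(G')=v(G)+k-1$ and $e(G')=e(G)+k$. Substituting into the definitions gives
\[
D(G')-D(G)=19(k-1)-12k=7k-19,
\]
\[
d(G')-d(G)=3(k-1)-2k=k-3.
\]

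No step is a real obstacle. The only point needing care is the degree-$2$ argument showing that a cycle uses either none or all of the ear. It relies on the ends $a,b$ being distinct and on the interior vertices of $Q$ being new. The hypothesis $k\ge2$ ensures the ear adds no parallel edge; in the case $k=1$ the graph $G'$ would not be $G+ab$ with the stated counts unless $ab\notin E(G)$.
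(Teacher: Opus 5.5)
Your proof is correct and follows the paper's argument. Every new cycle is the ear together with an $(a,b)$-path of $G$, which gives the criterion $(-k)\bmod 4\notin\mathcal R(G;a,b)$, and the deficit changes come from adding $k-1$ vertices and $k$ edges; your degree-$2$ argument simply spells out a step the paper states without comment.
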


\begin{proof}
Every cycle of $G'$ that is not contained in $G$
consists of the added ear and an $(a,b)$-path in $G$.
Its length is divisible by 4 if and only if
the path has length congruent to $-k$ modulo 4.
This proves the equivalence.

The ear adds $k-1$ vertices and $k$ edges, so
\[
\begin{aligned}
 D(G')-D(G)&=19(k-1)-12k=7k-19,\\
 d(G')-d(G)&=3(k-1)-2k=k-3.
\end{aligned}
\]
\end{proof}

We use the vertex labels in Figure~\ref{fig:special-graphs}.
The vertices $x,y$ of $T(x,y)$ are its \emph{terminals}, and the other four vertices are its \emph{internal vertices}.
 A copy $T(x',y')$ is attached at distinct vertices $u,v$ by identifying $(x',y')$ with either $(u,v)$ or $(v,u)$. In $S_7$, the vertex $z$ is its \emph{centre}, and the paths $zua,zvb,zwc$ are its \emph{arms}.
The initial graphs used in our constructions are $K_1$ and those listed in Table~\ref{tab:seeds}. All these graphs are $\mathcal C_{0\bmod4}$-free. Among the graphs listed in the table, all except $K_2$
are 2-connected.

\begin{table}[ht]
\centering
\caption{Orders, sizes and deficits of the nontrivial initial graphs.}
\label{tab:seeds}
\begin{tabular}{@{}lrrrr@{}}
\toprule
Graph & Order & Size & $D$ & $d$\\
\midrule
$K_2$&2&1&7&3\\
$K_3$&3&3&2&2\\
$S_7$&7&9&6&2\\
$L_8$&8&11&1&1\\
$R_9$&9&12&8&2\\
$L_{13}$&13&19&0&0\\
\bottomrule
\end{tabular}
\end{table}

The family $\mathcal Z$ consists of the graphs obtained
by the following three constructions.
Each construction starts with two vertex-disjoint copies
$T(x_1,y_1)$ and $T(x_2,y_2)$ of $T(x,y)$.
In each copy, the vertices $x_i,y_i$ correspond to
$x,y$, respectively.
For each $i\in\{1,2\}$, the ordered pair $(u_i,v_i)$
is chosen from
\[
 \{(x_i,y_i),(y_i,x_i)\}.
\]
These choices are independent.
All vertices are distinct except for the stated identification.

\begin{description}[leftmargin=1.4em,labelindent=0pt]
\item[A.]
The vertices $u_1,u_2$ are identified as a single vertex $x$.
Two new vertices $p,q$ are added, together with the edges
$v_1p,pq,qv_2,xp,xq$.

\item[B.]
A new vertex $p$ is added, together with the edges
$u_1u_2,v_1p,pv_2,u_1p,u_1v_2$.

\item[C.]
A new vertex $p$ is added, together with the edges
$u_1u_2,v_1p,pv_2,u_1p,u_2p$.
\end{description}

These constructions are shown in Figure~\ref{fig:templates}. Every graph in $\mathcal Z$ has order 13 and size 19.

\begin{figure}[htbp]
\centering
\begin{tikzpicture}[
  x=.85cm,y=.85cm,
  vertex/.style={circle,fill=black,inner sep=1.2pt},
  every node/.style={font=\small}
]

\begin{scope}[xshift=0cm]
  \node[vertex,label={above:{$x=u_1=u_2$}}]
    (A-x) at (0,2.2) {};
  \node[vertex,label={below:{$v_1$}}]
    (A-v1) at (-1.2,0) {};
  \node[vertex,label={below:{$p$}}]
    (A-p) at (-.4,0) {};
  \node[vertex,label={below:{$q$}}]
    (A-q) at (.4,0) {};
  \node[vertex,label={below:{$v_2$}}]
    (A-v2) at (1.2,0) {};
  \draw (A-v1)--(A-p)--(A-q)--(A-v2)
        (A-x)--(A-p) (A-x)--(A-q);
  \draw[dashed]
    (A-x)--node[left] {$T$} (A-v1)
    (A-x)--node[right] {$T$} (A-v2);
  \node at (0,-.7) {A};
\end{scope}

\begin{scope}[xshift=3.8cm]
  \node[vertex,label={above:{$u_1$}}]
    (B-u1) at (-1,2.2) {};
  \node[vertex,label={above:{$u_2$}}]
    (B-u2) at (1,2.2) {};
  \node[vertex,label={below:{$v_1$}}]
    (B-v1) at (-1,0) {};
  \node[vertex,label={below:{$p$}}]
    (B-p) at (0,0) {};
  \node[vertex,label={below:{$v_2$}}]
    (B-v2) at (1,0) {};
  \draw (B-u1)--(B-u2)
        (B-v1)--(B-p)--(B-v2)
        (B-u1)--(B-p) (B-u1)--(B-v2);
  \draw[dashed]
    (B-u1)--node[left] {$T$} (B-v1)
    (B-u2)--node[right] {$T$} (B-v2);
  \node at (0,-.7) {B};
\end{scope}

\begin{scope}[xshift=7.6cm]
  \node[vertex,label={above:{$u_1$}}]
    (C-u1) at (-1,2.2) {};
  \node[vertex,label={above:{$u_2$}}]
    (C-u2) at (1,2.2) {};
  \node[vertex,label={below:{$v_1$}}]
    (C-v1) at (-1,0) {};
  \node[vertex,label={below:{$p$}}]
    (C-p) at (0,0) {};
  \node[vertex,label={below:{$v_2$}}]
    (C-v2) at (1,0) {};
  \draw (C-u1)--(C-u2)
        (C-v1)--(C-p)--(C-v2)
        (C-u1)--(C-p) (C-u2)--(C-p);
  \draw[dashed]
    (C-u1)--node[left] {$T$} (C-v1)
    (C-u2)--node[right] {$T$} (C-v2);
  \node at (0,-.7) {C};
\end{scope}

\end{tikzpicture}
\caption{The three constructions defining $\mathcal Z$.}
\label{fig:templates}
\end{figure}

\FloatBarrier
\begin{proposition}\label{prop:switch}
Every member of $\mathcal Z$ is switching-equivalent to $L_{13}$.
In particular, every graph in $\mathcal Z$ is 2-connected
and $\mathcal C_{0\bmod4}$-free.
\end{proposition}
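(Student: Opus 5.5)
We need to show each of the 12 graphs in $\mathcal Z$ (three constructions times the independent choices of orientation for each copy of $T$) can be transformed into $L_{13}$ by Whitney switches. The second sentence then follows, because switches preserve 2-connectivity and $\mathcal C_{0\bmod4}$-freeness (by \cite[Lemma~10]{GLSTVZ}), and $L_{13}$ is 2-connected and $\mathcal C_{0\bmod4}$-free (Table~\ref{tab:seeds}).

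\textbf{Step 1: orientation of each copy of $T$.} In each construction, every copy $T(x_i,y_i)$ meets the rest of the graph only in $\{x_i,y_i\}$, and its four internal vertices $a,b,c,d$ induce a connected subgraph. So $\{x_i,y_i\}$ is a 2-vertex cut, and the internal vertices form a component $P$ of $G-\{x_i,y_i\}$. Switching on $P$ replaces the copy by its mirror image $T(y_i,x_i)$ attached at the same pair. Since $T(x,y)$ has an automorphism exchanging $x$ and $y$ (the reflection $a\leftrightarrow c$, $b\leftrightarrow d$), the two choices of $(u_i,v_i)$ give switching-equivalent (indeed isomorphic after the switch) graphs. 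Hence within each construction all four orientation choices lie in one switching orbit, and it suffices to handle one representative each of A, B and C.

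\textbf{Step 2: A and $L_{13}$.} Construction A with $(u_i,v_i)=(x_i,y_i)$ is exactly $L_{13}$ as drawn, with $x$ the identified vertex, $v_i=y_i$, and the path $y_1pqy_2$ together with the edges $xp,xq$. Nothing more is needed here.

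\textbf{Step 3: B and C to A.} This is the main obstacle: I must find explicit 2-cuts whose switches realise the remaining constructions. In B, the set $\{u_1,p\}$ separates the copy $T_1$ together with $v_1$; equivalently, it is a 2-cut isolating the component containing the internal vertices of $T_1$ and $v_1$. I will try switching at such a 2-cut, and at $\{u_1,v_2\}$ (which separates $p$, $v_1$ and $T_1$ from $T_2$ and $u_2$), and compare the edge patterns with A and C. A useful invariant guides the search: in A there are three internally disjoint "short" connections around the $p,q$ gadget, and B and C both have $5$ edges plus one new vertex replacing A's $5$ edges plus two new vertices with one identification. So in each case I will look for a 2-cut $\{s,t\}$ for which exchanging the roles of $s$ and $t$ on one side merges $u_1$ and $u_2$ into a single vertex of degree matching $x$ in A, turning the edge $u_1u_2$ into a path segment through $p$.

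Concretely, I would first verify the switch relating C to B. The two differ only in whether $p$ or $v_2$ is the neighbour of $u_1$ versus $u_2$, so a switch at $\{u_2,p\}$ exchanging the side containing $T_2$ and $v_2$ should do it. I would then verify one switch relating B to A. If no single switch works, I would compose two switches and check by comparing vertex degrees and the resulting block structure; degree sequences are a quick sanity check before a full isomorphism check. Once these two explicit switches are exhibited, transitivity of switching-equivalence completes the proof.
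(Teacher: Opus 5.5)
Your strategy matches the paper's: switch each copy of $T$ at its terminals, observe that construction A with $(u_i,v_i)=(x_i,y_i)$ is $L_{13}$, and link A, B and C by explicit switches. However, Step~3, which is the whole content of the proposition, is never carried out. You list cuts to ``try'', say a switch ``should do it'', and leave open the option of composing two switches if none works. Until the edge set after a switch is computed and matched to a construction by an explicit relabelling, nothing about B or C has been shown.

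The cuts you name do work.
\begin{itemize}
\item In B, the internal vertices of the second copy together with $u_2$ form a component of $G-\{u_1,v_2\}$. Switching on it replaces $u_1u_2$ by $v_2u_2$ and hands $v_2$'s role in that copy to $u_1$. The edges outside the copies become $v_1p,\,pv_2,\,v_2u_2,\,u_1p,\,u_1v_2$. Now $u_1$ is a terminal of both copies, and $v_1pv_2u_2$ is a path whose two interior vertices are adjacent to $u_1$. This is construction A with $x=u_1$ and $q=v_2$; it is the inverse of the paper's switch at $\{x,p\}$ in A.
\item In C, the second copy minus $u_2$ is a component of $G-\{u_2,p\}$. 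Switching on it replaces $pv_2$ by $u_2v_2$ and hands $u_2$'s role to $p$, giving outside edges $u_1u_2,\,v_1p,\,u_2v_2,\,u_1p,\,u_2p$. The relabelling $(u_1,v_1,u_2,v_2,p)_{\mathrm B}=(p,v_2,v_1,u_1,u_2)$ then produces construction B, with the old first copy attached in reversed orientation, which B permits. The paper instead passes from B to C by a switch at $\{u_1,p\}$.
\end{itemize}
Writing out these two verifications is what turns your plan into a proof.

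Separately, the justification in Step~1 is false. $T(x,y)$ has no automorphism exchanging $x$ and $y$: $y$ lies on the triangle $ybc$, while $x$ lies on no triangle. Your map $a\leftrightarrow c$, $b\leftrightarrow d$ sends the chord $bc$ to the non-edge $ad$. The conclusion of Step~1 still holds, because the switch on the four internal vertices at $\{u_i,v_i\}$ by itself produces the other attachment, exactly as in the paper. But the two orientation choices need not give isomorphic graphs. For example, in A, reversing one copy puts the identified vertex on two triangles, whereas the three triangles of $L_{13}$ are pairwise disjoint. Drop the automorphism claim and the parenthetical ``isomorphic after the switch''.
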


\begin{proof}
The graph $L_{13}$ shown in Figure~\ref{fig:special-graphs}
is obtained by construction A with
$(u_i,v_i)=(x_i,y_i)$ for $i=1,2$.
The two copies of $T$ in each construction are denoted
by $H_1,H_2$.

In construction A, the set $\{x,p\}$ is a 2-vertex cut,
and $V(H_1)\setminus\{x\}$ is the vertex set of a component
after its deletion.
After a switch on this component at $\{x,p\}$,
the first copy has terminals $p,v_1$,
and the edge $pv_1$ becomes $xv_1$.
The second copy still has terminals $x,v_2$.
The edges outside the two copies are now
\[
 xv_1,\ qv_2,\ pq,\ xq,\ xp.
\]
With the second copy listed first, the relabelling
\(
 (u_1,v_1,u_2,v_2,p)_{\mathrm B}
   =(x,v_2,v_1,p,q)_{\mathrm A}
\)
gives a graph of construction B.

In this graph of construction B, the set $\{u_1,p\}$
is a 2-vertex cut, and $V(H_1)\setminus\{u_1\}$
is the vertex set of a component after its deletion.
After a switch on this component at $\{u_1,p\}$,
the first copy has terminals $p,v_1$,
and the edge $pv_1$ becomes $u_1v_1$.
The second copy still has terminals $u_2,v_2$.
The edges outside the two copies are now
\[
 u_1v_1,\ u_1u_2,\ pv_2,\ u_1p,\ u_1v_2.
\]
The relabelling
\(
 (u_1,v_1,u_2,v_2,p)_{\mathrm C}
   =(p,v_1,v_2,u_2,u_1)_{\mathrm B}
\)
gives a graph of construction C.

In each construction, deleting the two terminals of either copy separates its four internal vertices from the rest of the graph. These vertices induce a path and therefore form a component. A switch on this component at $\{u_i,v_i\}$ exchanges the neighbors of $u_i,v_i$ within the copy, leaving all edges outside the copy unchanged. The correspondence $x\mapsto u_i$, $y\mapsto v_i$ is replaced by $x\mapsto v_i$, $y\mapsto u_i$, or conversely. Thus the switch realizes the other choice in the definition of $\mathcal Z$, while $u_i,v_i$ retain their roles in the construction. These switches can be performed independently for the two copies.
Consequently, every graph specified by  constructions A, B and C is switching-equivalent to $L_{13}$.

As noted above, $L_{13}$ is 2-connected and
$\mathcal C_{0\bmod4}$-free. Since Whitney switches preserve both properties, every graph in $\mathcal Z$ has these properties as well.
\end{proof}

\begin{remark}\label{rem:l8-switch}
In the construction of $L_8$, either of the two identifications
of the terminals of $T(x',y')$ with $x,y$ yields a graph
switching-equivalent to $L_8$.
Indeed, the four internal vertices of the copy form
a component after $x,y$ are deleted.
A switch on this component at $\{x,y\}$ transforms
one graph into the other.
Since $L_8$ is 2-connected and $\mathcal C_{0\bmod4}$-free,
both graphs have these properties.
\end{remark}

A \emph{theta} is the union of three internally vertex-disjoint paths with the same two ends. It is an \emph{even theta} if 
all three paths have even length. Following~\cite{GLSTVZ}, we know
a $\mathcal C_{0\bmod4}$-free graph contains no even theta.
A \emph{thread} is a maximal path whose internal vertices have
degree 2 and whose ends have degree different from 2. A thread $P$ is \emph{suppressed} by deleting all its edges and internal vertices and adding a single edge joining its ends.
Any parallel edges arising from this operation are retained.

A \emph{triangle-star} in $G$ is a copy of $S_7$ whose centre has degree 3 in $G$ and whose three internal arm vertices have degree 2 in $G$. For a triangle-star with centre $z$ and arms $zua,zvb,zwc$, the operation replacing $G$ by $G-\{z,u,v,w\}$
is called a \emph{triangle-star deletion}. A graph $G$ is \emph{locally reduced throughout its switching orbit}
if no graph in its switching orbit contains a triangle-star or a pair of adjacent vertices of degree 2.

\begin{lemma}
\label{lem:triangle-star}
If $H$ is obtained from a 2-connected
$\mathcal C_{0\bmod4}$-free graph $G$ by a triangle-star
deletion, then $H$ is 2-connected and
$\mathcal C_{0\bmod4}$-free, with
\[
 d(H)=d(G),\qquad D(H)=D(G)-4.
\]
Moreover, either $H=K_3$ or $d(G)\ge4$.
\end{lemma}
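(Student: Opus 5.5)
Throughout, let $z$ be the centre of the triangle-star and let $zua,zvb,zwc$ be its arms. By definition $\deg_G(z)=3$ and $\deg_G(u)=\deg_G(v)=\deg_G(w)=2$. So the four deleted vertices $z,u,v,w$ carry exactly six edges, namely the edges of the three arms, and nothing else.

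\emph{Step 1: $H$ is $\mathcal C_{0\bmod4}$-free and the deficits.} Freeness is immediate, since $H$ is a subgraph of $G$. Passing from $G$ to $H$ removes $4$ vertices and $6$ edges. Hence
\[
d(H)=d(G)-12+12=d(G),\qquad D(H)=D(G)-76+72=D(G)-4.
\]

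\emph{Step 2: $H$ is $2$-connected.} The triangle $abc$ lies in $H$. We need $H$ connected and without a cut vertex. Let $s\in V(H)$. For any vertex $y\in V(H)\setminus\{s\}$, the graph $G-s$ is connected, so it contains a path from $y$ to a vertex of $\{a,b,c\}\setminus\{s\}$. Take such a path and stop it at its first vertex in $\{a,b,c\}$. This path avoids $z,u,v,w$, because it can only enter the star through $a$, $b$ or $c$. The vertices of $\{a,b,c\}\setminus\{s\}$ are mutually adjacent in $H-s$. Therefore $H-s$ is connected. Also $v(H)\ge3$, so $H$ is $2$-connected.

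\emph{Step 3: the dichotomy.} Assume $H\neq K_3$; we must show $d(G)\ge4$, that is, $d(H)\ge4$. I would exploit residue sets. In $S_7$, the paths from $a$ to $b$ are $ab$ (length $1$), $acb$ (length $2$) and $azb$ through the arms (length $4$). So $\mathcal R(S_7;a,b)=\{0,1,2\}$, and by symmetry the same holds for the pairs $\{a,c\}$ and $\{b,c\}$. Because $H\neq K_3$ and $H$ is $2$-connected, $H$ contains an $abc$-ear, or more generally a structure attaching the rest of $H$ to the triangle. Every $(a,b)$-path $P$ of $H$ closes a cycle with the arm path $aub\cdots$ of length $4$ through $z$. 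The lengths of cycles $P\cup(azb)$ are $\ell(P)+4$, so no $(a,b)$-path of $H$ avoiding $z$ may have length $\equiv0\pmod4$. The same holds for other combinations: $P$ together with $acb$, or with the route through $c$ and the arms.

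The plan is to use these restrictions to show that $H$ cannot be too dense. I expect this to be the main obstacle: it needs a genuine structural argument, not a computation, and I am least certain how it goes. A first attempt is as follows. Choose an ear decomposition of $H$ starting from the triangle $abc$, or a nontrivial piece of $H$ attached to the triangle. Restricted residue sets then force every ear to have length at least $3$. By Lemma~\ref{lem:sound}, an ear of length $k\ge3$ does not decrease $d$, while ears of length $2$ decrease it. If the relevant $(a,b)$-, $(b,c)$- and $(a,c)$-pairs are forced to be $\{1,2,3\}$-type, then ears of length $2$ ending at two of $a,b,c$ are forbidden. More generally, I would try an induction on $v(H)$ using Theorem~\ref{thm2} applied to suitable pieces. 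Failing that, I would compare with the bound $d\ge0$ applied to the graph obtained by replacing the triangle by a structure that realizes residue $0$. For example, add the ear $azb$ (length $2$, cost $-1$ in $d$) and show it keeps freeness in a modified graph. Each such substitution yields an inequality of the form $d(H)\ge d(H^*)+\text{const}$, and the aim is to get const $\ge4$ from $d(H^*)\ge0$. If the bipartite case arises, it gives the bound $d\ge5$ directly.
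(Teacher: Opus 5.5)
Your Steps 1 and 2 are correct. Step 2 routes each vertex of $H-s$ to the clique $\{a,b,c\}\setminus\{s\}$ along a path that cannot enter the star before reaching the triangle, and this is a fine substitute for the paper's component argument. The gap is Step 3, which is the actual content of the lemma. You give no proof of $d(H)\ge4$, and the ear-counting strategy you sketch does not deliver it. The star constrains only $C$-ears, that is, ears of $C=abca$ with both ends on $C$. An ear of a decomposition of $H$ with an end off $C$ is unconstrained and may have length $1$, which lowers $d$ by $2$. For instance, let $H$ consist of $C$, a $10$-cycle through the edge $ab$, and a chord joining two antipodal vertices of that $10$-cycle. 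Restoring the star gives a $\mathcal C_{0\bmod4}$-free $G$, yet the decomposition of $H$ by the $9$-edge $C$-ear followed by the chord contains a $1$-edge ear. Applying Theorem~\ref{thm2} to pieces is also far too weak: it gives each piece only $d\ge0$, whereas the count below needs $d\ge5$ for every piece.

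What is missing is the decomposition of $H$ along the components of $H-V(C)$.
\begin{enumerate}[label=(\arabic*)]
\item Every $C$-ear $P$ satisfies $\ell(P)\equiv1\pmod4$. If its ends are $a,b$, the residues $0,3,2$ are excluded by the arm route $auzvb$ of length $4$, the edge $ab$, and the arc $acb$, respectively. Since a $C$-ear uses no edge of $C$, this gives $\ell(P)\ge5$.
\item No component $S$ of $H-V(C)$ has neighbours at all of $a,b,c$. Otherwise a minimal subtree through $S$ spanning $a,b,c$ yields three $C$-ears, all odd, whose lengths sum to twice the number of edges of the tree.
\item Hence each $S$ attaches at exactly two vertices $x,y$ of $C$. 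The graph $N_S=H[V(S)\cup\{x,y\}]$ is 2-connected, and each of its $(x,y)$-paths is either $xy$ or a $C$-ear, hence odd. Lemma~\ref{lem:parity} therefore forces $N_S$ to be bipartite. Since $N_S$ contains a $C$-ear of length at least $5$, we have $v(N_S)\ge6$, and Lemma~\ref{lem:bipartite-bound} gives $d(N_S)\ge5$.
\item The pieces $N_S$ meet only in vertices and edges of $C$, and each contains exactly one edge of $C$. Counting gives $d(H)=2+\sum_S\bigl(d(N_S)-3\bigr)\ge2+2k$, where $k$ is the number of components of $H-V(C)$. The case $k=0$ is $H=K_3$, and $k\ge1$ gives $d(G)=d(H)\ge4$.
\end{enumerate}
The essential inputs your plan lacks are the bipartiteness of the pieces and the bipartite bound. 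You mention the bipartite case only as a possible side case, but every piece is bipartite.
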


\begin{proof}
The deleted triangle-star has triangle $C=abca$
and arms $zua,zvb,zwc$, so $H=G-\{z,u,v,w\}$.
The deleted vertices have neighbors in $H$ only at $a,b,c$.
Every component of $H$ therefore meets $C$;
otherwise, it would also be a component of $G$.
Since $C$ is connected, $H$ is connected.

For any vertex $t\in V(H)$, the surviving vertices of $C$
lie in one component of $H-t$.
Any other component would have no neighbor among
$z,u,v,w$ and would therefore also be a component of $G-t$,
contrary to the 2-connectivity of $G$.
Thus $H$ is 2-connected.
As a subgraph of $G$, it is $\mathcal C_{0\bmod4}$-free.
The deletion removes 4 vertices and 6 edges,
which gives the two deficit identities.

A path in $H$ between distinct vertices of $C$
cannot have length $0$ modulo 4:
the two corresponding arms through $z$ would complete
a cycle of length divisible by 4.
For a $C$-ear $P$ in $H$, the edge and the two-edge arc
of $C$ joining its ends also rule out lengths
$3$ and $2$ modulo 4, respectively.
Hence every $C$-ear in $H$ satisfies
\[
 \ell(P)\equiv1\pmod4.
\]
Since its edges lie outside the triangle $C$,
such an ear cannot have length 1.
Consequently, $\ell(P)\ge5$.

For any component $S$ of $H-V(C)$, at least two vertices
of $C$ have neighbors in $S$, since $H$ is 2-connected.
If all three vertices have neighbors in $S$,
a tree in $S$ joining one chosen neighbor of each of
$a,b,c$, together with the three corresponding edges to $C$,
contains a minimal subtree $Q$ spanning $a,b,c$.
Its only leaves are $a,b,c$, and all its other vertices
lie in $S$.
The three paths in $Q$ between its leaves are $C$-ears
and therefore have odd length.
Their lengths have an odd sum, whereas every edge of $Q$
is counted twice in this sum.
This is a contradiction.
Thus exactly two vertices $x,y$ of $C$ have neighbors in $S$.

The graph \(  N_S=H[V(S)\cup\{x,y\}] \)
contains the edge $xy$ and is 2-connected.
Indeed, for every $t\in V(N_S)$, the surviving vertices of $\{x,y\}$ lie in one component of $N_S-t$. When both survive, the edge $xy$ joins them. Any other component contains neither surviving vertex
of $\{x,y\}$ and would also be a component of $H-t$, contradicting the 2-connectivity of $H$.

Every $(x,y)$-path in $N_S$ is either the edge $xy$
or a $C$-ear in $H$, and hence has odd length.
Lemma~\ref{lem:parity} implies that $N_S$ is bipartite.
Since $S$ is connected and has neighbors at both $x,y$,
there is a $(x,y)$-path with all internal vertices in $S$.
This path is a $C$-ear of length at least 5,
so $v(N_S)\ge6$.
Lemma~\ref{lem:bipartite-bound} therefore gives
\[
 e(N_S)\le\frac{3(v(N_S)-2)}2
          =\frac32|V(S)|.
\]

If $H-V(C)$ has $k$ components, counting the three
edges of $C$ only once gives
\[
 e(H)
 =3+\sum_S\bigl(e(N_S)-1\bigr)
 \le3+\frac32\bigl(v(H)-3\bigr)-k.
\]
It follows that
\[
 d(G)=d(H)\ge2+2k.
\]
For $k=0$, the graph $H$ is $K_3$;
for $k\ge1$, the inequality gives $d(G)\ge4$.
\end{proof}

A \emph{subdivision of $K_4$} is obtained by replacing each edge of $K_4$ with a path of positive length. The internal vertices of these paths are new, and the paths have pairwise disjoint sets of internal vertices.
The four original vertices are its \emph{branch vertices}. For a multigraph $M$, a \emph{series reduction} replaces two distinct edges $xu,uy$, where $\deg_M(u)=2$, by a single edge $xy$ and deletes $u$; the case $x=y$
produces a loop. A \emph{parallel reduction} removes one of two parallel edges with distinct ends. Each reduction involves both participating edges. Cycles of length 2 formed by parallel edges are allowed
in multigraphs.

The following lemma is a consequence of
\cite[Theorem~1 and Corollary~4]{Duffin}.

\begin{lemma}\label{lem:duffin}
If a connected loopless multigraph $M$ with $e(M)\ge2$
contains no subdivision of $K_4$ and every two distinct
edges of $M$ lie on a common cycle, then $M$ can be reduced
to a single loop by series and parallel reductions.
For every $e_0\in E(M)$, the sequence can be chosen so that
$e_0$ is involved only in the final reduction.
\end{lemma}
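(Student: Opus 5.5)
The plan is to derive the lemma from Duffin's characterization of series--parallel networks, and to run an induction on $e(M)$ in which every reduction before the last one avoids $e_0$. First, the hypothesis that every two distinct edges lie on a common cycle is exactly $2$-connectivity of the loopless multigraph $M$ (with $e(M)\ge2$). So by \cite[Theorem~1 and Corollary~4]{Duffin}, since $M$ has no subdivision of $K_4$, for any chosen edge $e_0=st$ the multigraph $M-e_0$ is a two-terminal series--parallel network with terminals $s,t$. Concretely, $M-e_0$ is either a single $(s,t)$-edge, or a series composition or a parallel composition of smaller two-terminal series--parallel networks.

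The base case is $e(M)=2$. Two edges lying on a common cycle must form a digon on two vertices $s,t$. Both vertices have degree $2$, so a series reduction at $t$ uses both edges and produces a loop at $s$. This is the only reduction performed, so it is trivially the final one.

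For the inductive step, let $e(M)\ge3$. Then $M-e_0$ is not a single edge, and I will locate a reduction that does not involve $e_0$ by descending through the decomposition tree of $M-e_0$.
\begin{itemize}
\item Suppose the top-level composition is parallel. If two of its parts are single edges, these are parallel $(s,t)$-edges distinct from $e_0$, and a parallel reduction applies. Otherwise some part has at least two edges, and I recurse into that part.
\item Suppose the top-level composition is series. If every part is a single edge, then any interior junction vertex has degree $2$ in $M$, and its two incident edges differ from $e_0$. They also have distinct other ends, since otherwise $M$ would have a cut vertex or be a digon. So a series reduction applies. Otherwise I recurse into a part with at least two edges.
\end{itemize}
Since the network is finite, the descent terminates at such a reduction. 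Vertices interior to a sub-network have all their incident edges inside it, so the degree-$2$ and parallel-edge claims remain valid in $M$ and not only in the sub-network.

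It remains to check that the reduced multigraph $M'$ again satisfies the hypotheses, so that induction applies with the same $e_0$, which survives.
\begin{itemize}
\item $M'$ is loopless, because the reduction joins distinct ends.
\item $M'$ is connected and has $e(M')=e(M)-1\ge2$.
\item $M'$ contains no subdivision of $K_4$. A parallel reduction passes to a subgraph. A series reduction is the inverse of subdividing an edge, so a $K_4$-subdivision in $M'$ would yield one in $M$ after re-subdividing the edge $xy$.
\item Every two edges of $M'$ lie on a common cycle. Cycles of $M$ map to cycles of $M'$: after a series reduction, a cycle through $u$ uses both $xu$ and $uy$ and becomes a cycle through $xy$. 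After a parallel reduction, a cycle using the deleted edge can use its twin instead.
\end{itemize}
Induction then reduces $M'$ to a loop with $e_0$ involved only in the final reduction, and prepending our reduction proves the claim for $M$.

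The main obstacle is the descent step: guaranteeing that a reduction avoiding $e_0$ exists. This rests on taking the two-terminal decomposition relative to the ends of $e_0$, which is where Duffin's Corollary~4 is used rather than just the existence of some reducible configuration. A secondary point needing care is excluding, in the series case, a degree-$2$ vertex whose two neighbors coincide. Such a vertex would force a cut vertex at that neighbor, or force $M$ to be a digon, and both are impossible when $M$ is $2$-connected with $e(M)\ge3$.
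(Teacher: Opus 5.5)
Your proposal is correct and follows essentially the paper's route. The paper gives no argument beyond invoking Duffin's Theorem~1 and Corollary~4; you rest on the same results, via the two-terminal decomposition of $M-e_0$ with terminals at the ends of $e_0$, and make explicit two steps the citation leaves implicit: the descent that finds a series or parallel pair avoiding $e_0$, and the check that the reduced multigraph again satisfies the hypotheses. Two small points could be tightened. In the series case the distinct ends are automatic, because the two terminals of every sub-network are distinct. In the parallel case, a cycle of $M$ using both twins is the digon itself, so it never serves as the common cycle of two distinct edges of $M'$.
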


\begin{lemma}\label{lem:bipartite}
If $G$ is a 2-connected bipartite $\mathcal C_{0\bmod4}$-free
graph, then $G$ contains three pairwise vertex-disjoint edges
whose endpoints all have degree 2 in $G$.
\end{lemma}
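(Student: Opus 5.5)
Write $n=v(G)$. Since $G$ is 2-connected we have $\delta(G)\ge2$, and since $G$ is bipartite, Lemma~\ref{lem:bipartite-bound} gives $e(G)\le\frac32(n-2)$ when $n\ge4$. So I want a structural argument showing that degree-2 vertices are plentiful and sit on edges, giving an ``adjacent degree-2 pair'' in at least three disjoint places. The natural tool is Lemma~\ref{lem:duffin}. Since $G$ is bipartite, the residues of $(a,b)$-paths all have one parity. Any subdivision of $K_4$ in a bipartite graph forces an even theta (a standard parity argument: among the thetas contained in a subdivided $K_4$, one has all three paths of equal parity, and bipartiteness forces that parity to be even when the ends lie in the same colour class), and $\mathcal C_{0\bmod4}$-free graphs have no even theta. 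So I first prove that $G$ contains no subdivision of $K_4$, by checking the parity classes of the four branch vertices.

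\textbf{Step 1: reduce to a series--parallel multigraph.} Suppress all threads of $G$ to get a multigraph $M$. Every vertex of $M$ has degree at least $3$, and $M$ is $K_4$-subdivision-free and 2-connected, so every two edges lie on a common cycle. If $G$ is a cycle, then $\ell(G)\equiv2\pmod4$ and $\ell(G)\ge6$, so three disjoint edges with all ends of degree 2 exist trivially. Otherwise, Lemma~\ref{lem:duffin} applies. In the reduction sequence, the first step is necessarily a parallel reduction, since $M$ has no degree-2 vertex. A 2-connected series--parallel multigraph with minimum degree at least $3$ contains at least two disjoint ``parallel classes''. More precisely, by the standard fact that such graphs have two vertices of degree at most 2 in the simple reduction, I obtain at least two pairs $\{x,y\}$ joined by at least two parallel threads. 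Each such pair, together with its threads, is a theta in $G$ (or a cycle through $x,y$) all of whose interior is degree-2.

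\textbf{Step 2: length analysis of parallel threads.} Suppose $P_1,P_2$ are threads of $G$ with the same ends $x,y$. Then $\ell(P_1)+\ell(P_2)\equiv2\pmod 4$, and both lengths have the same parity. If some thread has length $\ge3$, it contains an edge with both ends of degree 2 in its middle. Since $G$ is simple, at most one thread between $x,y$ has length 1. If two threads both have length $\le2$, their lengths are $\{1,1\}$ (impossible, as $G$ is simple), $\{2,2\}$ (a $C_4$, forbidden), or $\{1,2\}$ (wrong parity for a bipartite graph). Hence every parallel class contributes a thread of length $\ge3$, and so an internal edge whose ends have degree 2. Moreover, a third thread $P_3$ between the same ends would create an even theta, or a $0\bmod4$ cycle by pigeonhole on residues. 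So each class has exactly two threads, giving a cycle of length $\equiv2\pmod4$.

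\textbf{Step 3: obtaining a third edge.} This is the main obstacle. Step 1 yields only two parallel classes, and I must find three disjoint edges. I would strengthen the structural claim by using the last part of Lemma~\ref{lem:duffin}. Fix $e_0$ to be an edge of $M$ arising from one parallel class found in Step 1, and run the reduction sequence in which $e_0$ is involved only in the final reduction. The first reduction of this sequence gives a parallel class avoiding $e_0$. Iterating this argument along the decomposition tree of the series--parallel structure, I would argue that there are at least three leaf-level parallel classes; equivalently, the SP-decomposition tree has at least three leaves. The exception is when $M$ is very small, such as a theta multigraph on two vertices with three parallel edges, which Step 2 excludes, or two vertices with two edges, which is just a cycle. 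When exactly two classes arise, the remaining structure is a cycle of threads, and I would use the length condition again to find a thread of length $\ge3$ away from the two classes, or alternatively a thread of length $\ge5$ in one class, which contains two disjoint suitable edges. The final check is that the chosen edges are disjoint. This holds because they lie inside distinct threads, or far apart within one long thread.
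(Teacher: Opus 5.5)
Your setup matches the paper's: you exclude a subdivision of $K_4$, treat the cycle case, suppress threads to get $M$ with $\delta(M)\ge3$, invoke Lemma~\ref{lem:duffin}, and note that the first reduction is parallel. The gap is in the counting that should produce three edges. Step~3 is only a plan, and the claims it and Step~2 lean on are false.

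\textbf{Step~2 is wrong about three threads.} A parallel class can contain three threads. Three internally disjoint $(x,y)$-paths of length $3$ form a 2-connected bipartite graph all of whose cycles have length $6$. Odd threads with a common residue $\varepsilon\in\{1,3\}$ have pairwise sums $\equiv2\pmod4$, so your pigeonhole argument yields nothing. Hence the two-vertex multigraph with three parallel edges is not excluded, and there your Step~1 finds only one class.

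\textbf{Step~3's target is false.} There need not be three parallel classes, and your fallback dichotomy also fails. Let $M$ be the $4$-cycle $xywz$ with $xy$ and $zw$ doubled. Realise it in $G$ by four threads of length $3$ in the two classes and threads $xcz$, $ydw$ of length $2$. Every cycle of $G$ has length $6$ or $10$, so $G$ satisfies the hypotheses. There are exactly two classes, no thread of length $\ge3$ lies outside them, and no class thread has length $\ge5$. The required edges come from the second thread in each class, which your Step~2 (one thread of length $\ge3$ per class) does not capture.

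\textbf{The missing idea: count parallel edges of $M$, not classes.} Let $q$ be the number of edges of $M$ that have a parallel partner.
\begin{enumerate}
\item The first reduction gives $q\ge2$. The $e_0$-clause of Lemma~\ref{lem:duffin} rules out $q=2$: with a unique pair $e_0,e_1$, every sequence must start by reducing $e_0,e_1$. Yet one can be chosen in which $e_0$ occurs only in the final reduction, and $e(M)\ge3$ means the first reduction is not the last.
\item Let a class consist of $r$ threads $P_1,\dots,P_r$ between $x,y$. Any two of them complete to a theta: via a third thread if $v(M)=2$, or via an $(x,y)$-path of $M_0-xy$, where $M_0$ is the 2-connected underlying simple graph, if $v(M)\ge3$.
\item In a bipartite graph the three paths of a theta have equal parity, and even thetas are forbidden, so every $P_i$ is odd. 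Pairwise sums $\not\equiv0\pmod4$ then force a common residue $\varepsilon\in\{1,3\}$.
\item Each thread of length $\ge3$ supplies an interior edge whose ends have degree $2$, and at most one thread has length $1$. If one does, then $\varepsilon=1$ and the others have length $\ge5$, giving $2(r-1)\ge r$ disjoint such edges.
\end{enumerate}
Summing over classes gives at least $q\ge3$ edges. Without the theta argument in the second and third points, your Step~2 would not exclude a class with threads of lengths $2$ and $4$, which yields only one edge.
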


\begin{proof}
The graph $G$ contains no subdivision of $K_4$.
Indeed, two of its four branch vertices would lie in the
same bipartition class, and the three internally disjoint
paths between them would form an even theta.

If $G$ is a cycle, then $v(G)\ge6$, and three alternating
edges give the conclusion.
Otherwise, every thread of $G$ has distinct ends of
degree at least 3.
The suppression of all threads produces a loopless
connected multigraph $M$ with no cut vertex and
$\delta(M)\ge3$.
Each edge of $M$ corresponds to a thread of $G$,
and distinct threads have disjoint interiors.
Any subdivision of $K_4$ in $M$ would expand to one in $G$,
so $M$ contains no subdivision of $K_4$.
Moreover, every two distinct edges of $M$ lie on a common
cycle, since this property holds in $G$ and is preserved
by thread suppression.

By Lemma~\ref{lem:duffin}, the graph $M$ admits a
reduction sequence, in which each step decreases the
number of edges by 1.
The number of edges of $M$ having a parallel edge
is denoted by $q$.
Since $\delta(M)\ge3$, the first reduction must involve
two parallel edges, so $q\ge2$.

If $q=2$, then $M$ has a unique pair of parallel edges
$e_0,e_1$.
By Lemma~\ref{lem:duffin}, a reduction sequence can be
chosen so that $e_0$ is involved only in the final reduction.
However, the first reduction must involve $e_0,e_1$.
Since $e(M)\ge3$ and each reduction decreases the number
of edges by 1, the first reduction cannot be the last.
This contradiction gives $q\ge3$.

For two vertices $x,y$ joined by exactly $r\ge2$ edges
of $M$, the corresponding threads are denoted by
$P_1,\ldots,P_r$, with internal vertex sets
$I_1,\ldots,I_r$.
Any two of these threads can be completed to a theta in $G$.
If $v(M)=2$, then $r\ge3$ by $\delta(M)\ge3$,
and a third thread supplies the remaining path.
If $v(M)\ge3$, the underlying simple graph $M_0$ of $M$
is 2-connected, so $M_0-xy$ contains an $(x,y)$-path.
Its expansion in $G$ is internally disjoint from
$P_1,\ldots,P_r$ and supplies the third path.

The three paths of each such theta have the same parity
because $G$ is bipartite.
They must be odd, since an even theta is forbidden.
Furthermore,
\[
 \ell(P_i)+\ell(P_j)\not\equiv0\pmod4
 \qquad(1\le i<j\le r),
\]
since $P_i\cup P_j$ is a cycle.
Thus there is a residue $\varepsilon\in\{1,3\}$ such that
\[
 \ell(P_i)\equiv\varepsilon\pmod4
 \qquad(1\le i\le r).
\]

Each $G[I_i]$ contains a matching of size
$\lfloor(\ell(P_i)-1)/2\rfloor$.
Since the sets $I_i$ are pairwise disjoint,
these matchings together have size
\[
 m=\sum_{i=1}^{r}
 \left\lfloor\frac{\ell(P_i)-1}{2}\right\rfloor,
\]
and all their endpoints have degree 2 in $G$.

If $\ell(P_i)\ge3$ for every $i$, then $m\ge r$.
Otherwise, exactly one thread has length 1,
since $G$ is simple.
The common residue is then $\varepsilon=1$,
so every other thread has length at least 5.
Consequently,
\[
 m\ge2(r-1)\ge r.
\]

The threads associated with distinct unordered pairs
of ends have disjoint interiors.
Each of the $q$ edges counted above belongs to exactly
one such pair.
The resulting matchings therefore have disjoint vertex
sets and together contain at least $q\ge3$ edges,
whose endpoints all have degree 2 in $G$.
\end{proof}

For a 2-vertex cut $S=\{x,y\}$ of $G$, a \emph{side} is a subgraph $N=G[V(H)\cup S]$, where $H$ is a component of $G-S$. A cut is \emph{good} if every side contains an odd cycle.

\begin{lemma}\label{lem:cuts}
If $G$ is a 2-connected $\mathcal C_{0\bmod4}$-free graph
with no adjacent vertices of degree 2 and $S=\{x,y\}$
is a 2-vertex cut of $G$, then the following hold.
\begin{enumerate}[label=\rm(\roman*)]
\item Every side $N$ at $S$ with $v(N)\ge4$
contains an odd cycle.
\item The graph $G-S$ has exactly 2 components $H_1,H_2$.
\end{enumerate}
In particular, $S$ is good if $xy\in E(G)$ or
$\min\{v(H_1),v(H_2)\}\ge2$.
\end{lemma}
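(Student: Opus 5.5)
We prove (i) first and then derive (ii) from it, using Lemma~\ref{lem:bipartite} as the main tool.

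\emph{Part (i).} Let $N$ be a side at $S$ with $v(N)\ge4$, and suppose $N$ is bipartite. We build a 2-connected bipartite graph from $N$ by adding an $(x,y)$-path $P$ that closes it up. Since $G$ is 2-connected, the other component(s) of $G-S$ supply an $(x,y)$-path $Q$ whose interior lies outside $N$.

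If $xy\in E(G)$, we take $N'=N+xy$ (with $xy$ already present), which is a subgraph of $G$. In general we take $N'=N\cup Q$, where $Q$ is chosen among the $(x,y)$-paths through another component. We want $N'$ to be bipartite. This can fail if the parity of $Q$ disagrees with the bipartition of $N$; in that case we replace $Q$ by a path of the right parity in the abstract. That is, we add to $N$ a fresh $(x,y)$-path of length $2$ or $3$ matching the parity of $x,y$ in $N$. The aim is to keep every new cycle's length among those already realized modulo $4$ by some genuine path of $G$, so no cycle of length $0$ modulo $4$ appears.

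The cleanest version is this. Let $q\equiv\ell(Q)$. Any $(x,y)$-path $R$ in $N$ satisfies $\ell(R)+q\not\equiv0$, because $R\cup Q$ is a cycle in $G$. Since $N$ is bipartite, all such $R$ have the same parity. If that parity equals the parity of $q$, then $N\cup Q$ is a bipartite $\mathcal C_{0\bmod4}$-free subgraph of $G$. This graph is 2-connected, because $N$ is connected, every vertex of $N-S$ reaches both $x$ and $y$ within $N$ (by 2-connectivity of $G$), and $Q$ closes the cycle.

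If the parities differ, we instead add a fresh path of length $k$, where $k$ has the parity of the $R$'s and $k\equiv q+2$ is avoided appropriately. The key point is that bipartite pairs have residues within one parity class, and at most two residues occur there. If $\mathcal R(N;x,y)$ contains both residues of that parity class, then a genuine path in $G$ through another component already forbids one of them, which is a contradiction. So at most one residue $r$ occurs, and adding an ear of length $k\equiv -r+2$ with $k\ge2$ is $\mathcal C_{0\bmod4}$-free by Lemma~\ref{lem:sound}.

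Now apply Lemma~\ref{lem:bipartite} to this 2-connected bipartite graph $N'$. It yields three disjoint edges whose endpoints all have degree $2$ in $N'$. At most one of these edges can involve the added path, since we choose that path short (length $\le3$), and at most its interior is new. The edges avoiding $x$ and $y$ have both endpoints of degree $2$ in $G$, because vertices of $N-S$ have the same degree in $N'$ as in $G$. We still need to discard edges meeting $x$ or $y$. With three disjoint edges, at most two touch $\{x,y\}$, and edges lying on the added path are excluded. I expect that careful counting leaves one edge inside $N-S$, or else $v(N)\ge4$ forces an extra edge. That edge gives adjacent degree-$2$ vertices in $G$, a contradiction. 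This counting, and the choice of the length of the added path, is the main obstacle. If the counting fails, I would first try taking a longer ear (length $\equiv k$ and $\ge5$) so that all degree-$2$ edges produced on it can be identified and excluded explicitly.

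\emph{Part (ii).} Suppose $G-S$ has at least three components. Each side $N_i$ either contains an odd cycle or has $v(N_i)\le3$. In the latter case $N_i$ is a path $x\,w\,y$ with $\deg(w)=2$.

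If two sides contain odd cycles, Lemma~\ref{lem:parity} applied to each of them (made 2-connected by adding an $(x,y)$-path through a third side) shows that each realizes both an even and an odd residue between $x$ and $y$. Combining this with a third $(x,y)$-path produces a cycle of length $0$ modulo $4$: choose residues $r_1$ from the first side and $r_2$ from the second with $r_1+r_2\equiv0$, which should be possible after comparing the residue sets through the third side. If instead two sides are single paths $xwy$, they form a $4$-cycle, which is immediate.

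The remaining mixed case is one odd side and one path $xwy$, with some third side. The side $N_1$ together with $xwy$ is 2-connected and nonbipartite, so it has $(x,y)$-paths of both parities. The residue $2$ is excluded by the path $xwy$. The third side then forbids the remaining options and yields the contradiction.

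Finally, the last sentence of the lemma follows from (i). If $xy\in E(G)$, every side contains the edge $xy$, and a side with $v(N)=3$ is a triangle. If both components have at least $2$ vertices, then every side has $v(N)\ge4$.
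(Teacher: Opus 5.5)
\textbf{Part (i): the closure-and-count strategy has a genuine gap.}

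First, your claim that $\mathcal R(N;x,y)$ contains only one residue of its parity class is unsupported in exactly the case where you need it. Suppose every $(x,y)$-path through the other component(s) has the opposite parity to those in $N$. Then such a path forbids no residue of $N$'s class. If $\mathcal R(N;x,y)$ is $\{0,2\}$ or $\{1,3\}$, no $(x,y)$-path of any length can be added to $N$ while keeping it bipartite and $\mathcal C_{0\bmod4}$-free. So there is nothing to feed to Lemma~\ref{lem:bipartite}.

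Second, even when a closure $N'=N\cup P$ exists, Lemma~\ref{lem:bipartite} only guarantees \emph{some} three disjoint edges, and all of them may meet $\{x,y\}\cup V(P)$. For example, take $P=xp_1p_2y$, with $x,y$ each having a single neighbour $w,w'$ in $N$ and $\deg_G(w)=\deg_G(w')=2$. Then the output may be $xw,\,p_1p_2,\,yw'$. If $P$ is a long genuine path $Q$, all three edges may lie on $Q$. Lengthening the ear only adds more spoiled vertices.

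The missing idea is not to close $N$ up at all. Because $N+xy$ is 2-connected, the blocks of $N$ form a chain $B_1,\dots,B_k$ from $x$ to $y$ with connecting vertices $z_0=x,z_1,\dots,z_k=y$. Every vertex of $B_i$ other than $z_{i-1},z_i$ has the same degree in $B_i$ as in $G$. If some $B_i$ is 2-connected, it is already a bipartite $\mathcal C_{0\bmod4}$-free subgraph of $G$, so Lemma~\ref{lem:bipartite} applies to it directly. Only $z_{i-1},z_i$ can spoil the three disjoint edges it provides, so one edge survives. If every block is a bridge, $N$ is a path with at least two interior vertices.

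\textbf{Part (ii).} The decisive step is left as ``should be possible'', and the parity claim is misderived. Lemma~\ref{lem:parity} applied to $N_1\cup Q_3$ gives $(x,y)$-paths of both parities in that graph, but one of them may be $Q_3$ itself. Apply it instead to $N_1+xy$; the lemma needs no $\mathcal C_{0\bmod4}$-freeness. The even $(x,y)$-path it provides cannot be the edge $xy$, so it lies in $N_1$.

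With this, no case analysis is needed. Every side contains an even $(x,y)$-path: by (i) and this argument when $v(N)\ge4$, and the path $xwy$ when $v(N)=3$. Three components therefore give three internally disjoint even $(x,y)$-paths. Two of them have the same residue modulo 4, and their union is a cycle of length divisible by 4.

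Your derivation of the final assertion from (i) is fine.
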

\begin{proof}
For a side $N$ at $\{x,y\}$, the blocks of $N$ form
a chain $B_1,\ldots,B_k$ from $x$ to $y$,
since $N+xy$ is 2-connected.
The connecting vertices are denoted by
$z_0=x,z_1,\ldots,z_k=y$, where
\[
 V(B_i)\cap V(B_{i+1})=\{z_i\}
 \qquad(1\le i<k).
\]
In particular,
\[
 \deg_{B_i}(v)=\deg_G(v)
 \qquad
 \bigl(v\in V(B_i)\setminus\{z_{i-1},z_i\}\bigr).
\]

(i) If $N$ is bipartite and $v(N)\ge4$, then every
block $B_i$ is bipartite.
If some $B_i$ is 2-connected, Lemma~\ref{lem:bipartite}
gives three pairwise vertex-disjoint edges whose endpoints
have degree 2 in $B_i$.
At least one of these edges avoids $z_{i-1},z_i$,
so both its endpoints have degree 2 in $G$,
a contradiction.
Otherwise, all blocks of $N$ are bridges.
Thus $N$ is an $(x,y)$-path with at least two internal
vertices, again giving adjacent vertices of degree 2 in $G$.
This proves (i).

(ii) If $v(N)\ge4$, then (i) gives a nonbipartite
2-connected block $B_i$.
By Lemma~\ref{lem:parity}, there are two
$(z_{i-1},z_i)$-paths in $B_i$ of opposite parities.
Fixed paths through the remaining blocks extend them
to two $(x,y)$-paths in $N$ of opposite parities.
Hence $N$ contains an even $(x,y)$-path.
If $v(N)=3$, its unique vertex outside $\{x,y\}$
supplies an $(x,y)$-path of length 2.

Thus every side contains an even $(x,y)$-path.
Paths chosen from distinct sides are internally
vertex-disjoint, so three components of $G-\{x,y\}$
would give an even theta.
Since $\{x,y\}$ is a cut, $G-\{x,y\}$ therefore has
exactly two components.

If both components have at least two vertices,
then (i) shows that the cut is good.
If $xy\in E(G)$, every side with only one vertex
outside the cut is a triangle.
Together with (i), this proves the final assertion.
\end{proof}

The next lemma identifies the side beyond a maximal good cut. We give a proof following the approach of
\cite[Claim~5 in the proof of Theorem~1]{GLSTVZ}.

\begin{lemma}\label{lem:terminal-side}
If $G$ is a 2-connected $\mathcal C_{0\bmod4}$-free graph
with no adjacent vertices of degree 2 throughout its
switching orbit, $S_0$ is a good cut of $G$,
$B_0$ is a component of $G-S_0$, and $S$ is a good cut
disjoint from $V(B_0)$ chosen to maximize the order
of the component $B$ of $G-S$ containing $B_0$,
then $G-B$ is either a triangle with terminal set $S$
or a copy $T(x',y')$ with $\{x',y'\}=S$.
\end{lemma}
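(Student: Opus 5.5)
We write $S=\{x,y\}$, $B$ for the component of $G-S$ containing $B_0$ as in the statement, and $A$ for the other component of $G-S$; by Lemma~\ref{lem:cuts}(ii) there are exactly two. Let $N=G[V(A)\cup S]$ be the side opposite $B$. Since $S$ is good, $N$ contains an odd cycle. The plan is to show that the maximality of $v(B)$ forces $N$ to have no good cut of its own "inside" it. Then we pin down $N$ from its small structure, using the hypotheses of no adjacent degree-2 vertices and no even theta, together with the residue constraints coming from the $(x,y)$-paths through $B$.

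\emph{Step 1: residues through $B$.} The side $M=G[V(B)\cup S]$ has at least four vertices, since $B\supseteq B_0$ and $S_0$ being good forces $v(B_0)\ge 1$ with an odd cycle around it. As in the proof of Lemma~\ref{lem:cuts}(ii), $M$ has a nonbipartite 2-connected block, so $\mathcal R(M;x,y)$ contains a mixed-parity pair. Every $(x,y)$-path in $N$ then avoids two residues: the negatives of an even and of an odd residue. Moreover, $N$ contains no even theta with ends $x,y$ together with the even path through $M$.

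\emph{Step 2: $N$ has no good cut separating it further.} Suppose $S'=\{x',y'\}\neq S$ is a 2-vertex cut of $G$ with $S'\subseteq V(N)$ whose component containing $B$ is strictly larger. If $S'$ is good and disjoint from $V(B_0)$, it contradicts maximality. Applying Lemma~\ref{lem:cuts} to every 2-cut of $G$ lying in $N$ shows that such a cut is good whenever both sides have at least two vertices or $x'y'\in E(G)$. Hence every 2-cut inside $N$ cuts off only a single vertex and is a nonadjacent pair. Equivalently, $N+xy$ is "almost 3-connected": its only 2-cuts separate degree-2 vertices. We also use the switching-orbit hypothesis here. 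Switching at $S$ may change which vertices end up adjacent, but the no-adjacent-degree-2 property still holds there. This lets us assume the degree-2 vertices of $N$ are isolated from each other and from $x,y$ when convenient.

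\emph{Step 3: identify $N$.} Suppress the degree-2 vertices of $N+xy$ to obtain a 3-connected multigraph, or a small one. Since $G$ is planar by Lemma~\ref{lem:planar} and contains no even theta, we argue that this reduced graph is $K_3$ or $K_4$. A larger 3-connected planar graph contains many thetas, and with subdivisions of length at most $2$ one cannot avoid both an even theta and a $C_{0\bmod 4}$ cycle. The $K_4$ case with edge $xy$ virtual, and with subdivision lengths constrained by Step 1, should yield exactly $T(x',y')$: the hexagon with one chord $bc$, and terminals at opposite vertices. The triangle case yields $N$ a triangle, i.e.\ $A$ is a single vertex adjacent to both $x,y$ (plus the possible edge $xy$, which lies in $G-B$ and makes $G-B$ a triangle). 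For small configurations, a direct residue check is needed: the paths in $T(x,y)$ between $x$ and $y$ have lengths $3,3,5,5\equiv 1,3$, compatible with Step 1 forbidding only $\{0,2\}$ complements.

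\emph{Main obstacle.} Step 3 is the hard part: ruling out larger 3-connected skeletons, and all subdivision patterns of $K_4$ other than $T$. We would first try to bound the degrees. A vertex of degree $\ge4$ in the skeleton gives two thetas sharing a path, and a parity count in the style of Lemma~\ref{lem:triangle-star} then produces an even theta or a $0\bmod4$ cycle. After that, a finite case check on the subdivision lengths of $K_4$ remains, following \cite[Claim~5]{GLSTVZ}.
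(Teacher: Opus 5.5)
\textbf{Gap: Step~3 is asserted, not proved.} Your Steps~1 and~2 are sound. They match the opening of the paper's proof: its Claim~1, the deduction that $G-B$ is 2-connected, and that $G-B$ is a triangle when it has no even cycle. The gap is the assertion in Step~3 that the suppressed skeleton of $N+xy$ must be $K_3$ or $K_4$.

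Your justification is that a larger 3-connected planar graph with subdivision lengths at most $2$ cannot avoid both an even theta and a $\mathcal C_{0\bmod4}$ cycle. This is false as stated. $R_9$ is 2-connected and $\mathcal C_{0\bmod4}$-free, and has no adjacent degree-2 vertices. Its skeleton is the wheel on five vertices with four edges subdivided once (Case~2 of Lemma~\ref{lem:nogood}). Its hub has degree $4$, so your proposed degree bound fails as well.

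Whatever excludes larger skeletons must therefore come from the virtual edge $xy$. The side $M$ supplies $(x,y)$-paths of two consecutive residues, which forces $\mathcal R(N;x,y)$ to be a single mixed-parity pair. You never say how this constraint is used for skeletons other than $K_4$. Even granting maximum degree $3$, you would still have to rule out every cubic 3-connected planar skeleton (prism, cube, and so on). That is an infinite family, and you give no bound on the size of the skeleton.

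The $K_4$ case is also left as ``should yield'' and deferred to \cite{GLSTVZ}, Claim~5. That is precisely the argument that has to be supplied. Separately, your remark that switching lets you assume degree-2 vertices are not adjacent to $x,y$ is unjustified. It is false for $T(x,y)$ itself, where $a$ and $d$ are degree-2 neighbours of $x$.

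\textbf{How the paper avoids this.} The paper never passes to a skeleton. After Claim~1, if $H=G-B$ has an even cycle, it chooses an even cycle $C\subseteq H$ that maximizes the component $W$ of $G-C$ containing $B$, and then minimizes $\ell(C)$. It then argues as follows.
\begin{itemize}
\item It proves that $G-C$ is connected (Claim~2).
\item It bounds the attachment set $A=N_G(W)\cap V(C)$ by $|A|\le3$ (Claim~3). This uses fan separators, Claim~1, planarity, and the three-adjustable-paths residue argument.
\item It deduces $\ell(C)=6$ with exactly one chord.
\item It shows $|A|=2$ (Claim~4), and reads off $T(x',y')$ with $\{x',y'\}=S$.
\end{itemize}
These local arguments about a single extremal even cycle are what replace your missing global exclusion of large skeletons.
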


\begin{proof}
For $S=\{x,y\}$, the two sides $H=G-B$ and
$J=G[V(B)\cup S]$ both contain an odd cycle.

\smallskip
\noindent\textit{Claim 1. The only good cut contained in $V(H)$ is $S$.}

Any other such cut avoids $B_0$ and leaves $B$ intact.
Its component containing $B_0$ also contains a vertex of $S$,
contrary to maximality.

The blocks of $H$ form an $x$--$y$ chain. The two chain ends
of a nonbipartite block give a good cut by
Lemma~\ref{lem:cuts}, so Claim~1 forces this block to be $H$.
Thus $H$ is $2$-connected. If $H$ has no even cycle, it is an
odd cycle, since any additional ear creates a theta and hence
an even cycle. Only $x,y$ have neighbors outside $H$, so
absence of adjacent $2$-degree vertices gives $H=K_3$.

Following~\cite{GLSTVZ}, an \emph{adjustable $(u,v)$-path}
consists of an odd cycle and two vertex-disjoint paths from $u,v$ to distinct vertices of the cycle, each meeting
the cycle only at its endpoint; either path may be trivial.
Each side at a good cut supplies
an adjustable path between its roots, by its block chain
and Menger's theorem. An adjustable $C$-ear meets $C$ only
at its ends and uses no edge of $C$; ears may be chords.
The notation $C[a,b]$ refers to a fixed orientation of $C$,
and span parity means the parity of either arc between the ends.

Among even cycles $C\subseteq H$, the chosen one maximizes
the component $W$ of $G-C$ containing $B$, then minimizes
$\ell(C)$. Hereafter $A=N_G(W)\cap V(C)$.
No even cycle disjoint from $W$ omits a vertex of $A$,
and no even cycle has its vertex set properly contained
in $V(C)$. Hence $C$ has at most one chord, of even span.
An odd-span chord gives a smaller even cycle; two chords
give a theta omitting a cycle vertex. For alternating ends,
one omits a nonempty open arc; otherwise one chord shortcuts
the cycle while retaining the other.
Two disjoint $S$--$C$ paths in $H$, combined with a suitable
root path in $J$, give an even $C$-ear through $W$.
If $xy\in E(C)$, the even root path avoids that edge.

\smallskip
\noindent\textit{Claim 2. $G-C$ is connected.}

For a component $U\ne W$, its attachment set is
$T=N_G(U)\cap V(C)$. If $|T|\ge3$, a tripod through $U$
at any three vertices of $T$ forces $A$ into that triple:
otherwise its other two cycle arcs give a theta omitting
an attachment. Thus $|T|=3$. If $A=T$, the three sector
cycles must all be odd, although their lengths sum to
$\ell(C)$ plus twice the total branch length. Hence $|A|=2$.
Its sides contain $J$ and at least four vertices, respectively,
so Lemma~\ref{lem:cuts} makes $A$ good.
Claim~1 gives $A=S$ and $W=B$, with $T=\{x,y,z\}$.

If $xy\notin E(C)$, connectedness of $H-S$ gives an ear $P$
between the open $(x,y)$-arcs, avoiding $U$, which attaches
there only at $z$. On $C-y$, its end interval overlaps that
of the tripod's $(x,z)$-path; the nested or alternating
order of their ends gives a theta avoiding $y$.
Thus $xy\in E(C)$. The other $(x,z,y)$-arc has length at
least $5$, with, say, its $(x,z)$-segment of length at least
$3$. Absence of adjacent $2$-degree vertices supplies an
internal vertex incident with an edge outside $C$.
This edge extends to a further $C$-ear $P$ outside $W\cup U$.
The path $C-x$ and the tripod's $(z,y)$-path apply when its
other end is $y$; otherwise $C-y$ and the $(x,z)$-path apply.
Overlapping end intervals again give a theta avoiding an attachment.

Consequently every $U\ne W$ has two attachments $s,t$.
The set $\{s,t\}$ differs from $S$, since equality would give at least
three components after deleting $S$.
Unless $U=\{u\}$ and $st\notin E(G)$,
Lemma~\ref{lem:cuts} makes $\{s,t\}$ good, contrary to
Claim~1. Thus $E=sut$ is even. Its ends have odd distance
on $C$, since otherwise $C\cup E$ is an even theta.
Lemma~\ref{lem:cuts} supplies a crossing ear $Q$, with ends
in the order $s,r,t,q$. An interior in another component
outside $W$ would make $Q$ even. Both ears then have odd
span, so the cycles using both ears and opposite arc pairs
are even. Their lengths sum to $2\pmod4$, a contradiction.
Thus two vertex-disjoint even $C$-ears cannot alternate.

If $Q=rq$ is a chord, a theta omitting any point of the four
open arcs gives $A\subseteq\{s,r,t,q\}$.
Their arc parities are $(0,1,1,0)$ or $(1,0,0,1)$.
For an even ear $R$ through $W$, ends $r,q$ give alternating
even ears,
and ends $s,t$ give an even theta with $E$ and a route
through $Q$. For consecutive ends, say $s,r$, an even
intervening arc gives an even theta with $C$. Otherwise the routes
through $q,Q$ and through $E,t$ are even and form an even
theta with $R$.

Hence $Q-C\subseteq W$.
If $S\subseteq V(C)$, then $W=B$, $A=S$, and an even
root path in $J$ crosses $E$, again giving alternating even ears.
Otherwise an odd cycle $O\subseteq J$ meets $C$ at most
once. No three vertex-disjoint $O$--$C$ paths exist, since
each meets $S$.
There is an adjustable $C$-ear $R$ containing $O$ with end
$r$. If $O\cap C=\{c\}$, a path through $W$ joins $r$ to
$O-c$, retaining $c$; when $r=c$, another attachment replaces
$r$ in this construction. If $O$ avoids $C$, all edges within
$C$ are deleted from $G[V(W)\cup V(C)]$, and $C-r$ is
identified to $t_0$. No vertex separates $O$ from
$\{r,t_0\}$: a separator in $W$ would be a cut vertex of
$G$; deleting $r$ leaves a route to $C-r$, and deleting
$t_0$ leaves a route through $W$ to $r$.
Menger's theorem supplies the two arms of $R$.

If $R$ ends at $q$, its even path crosses $E$.
Otherwise a path through $W$ joins $q$ to $R-C$, first at
$z$. If $z$ lies on the $r$-arm, including its end on $O$,
their union contains an $(r,q)$-ear $K$ crossing $E$ and an
adjustable $(z,h)$-path from the remainder of $R$, otherwise
disjoint from $C\cup E\cup K$.
The ear $K$ is odd. Unless $h\in\{s,t\}$, the adjustable
path and a suitable segment of $K$ give an even ear crossing
$E$. For $h=s$ (the other case is symmetric), the adjustable
ears from $s$ to $r,q$ force $C[s,r],C[q,s]$ odd and the
other two arcs even. The odd segment of $K$ and its adjacent
odd arc give an even $(s,z)$-path; the remaining segment,
the other even arc, and $E$ give a second. The adjustable
$(z,s)$-path supplies a third, an even theta.
If $z$ lies off the $r$-arm, their union gives an adjustable
$(r,q)$-path containing $O$. Any third intersection with
$C$ gives three disjoint $O$--$C$ paths. Its even path
therefore crosses $E$, proving Claim~2.

\smallskip
The alternating classes $X,Y$ of $C$ each contain at most
two attachments: a tripod at three same-class attachments
has two branches of the same parity, giving an even ear
of even span. Both classes occur, by the even ear through
$W$. If exactly one root $x$ lies on $C$, all other
attachments are opposite to $x$: a path from $y$ through
$H-C$ to such an attachment combines with $J$ into an
adjustable ear. If both roots lie on $C$, Claim~2 gives $A=S$.

For a same-class pair $A_0\subseteq A$, no root belongs
to $A_0$. The graph $F$ consists of $H-C$, $A_0$, and only
the edges from $A_0$ to $H-C$.
Each attachment reaches $S'=S\setminus V(C)$ without the
other. Two disjoint $(S',A_0)$-paths would combine with $J$
into an even ear of even span (and are impossible if $|S'|=1$).
Menger gives a separator $a\notin V(C)$ maximizing the set
$R_a$ reachable from $S'\setminus\{a\}$ in $F-a$.
There is an $a$--$A_0$ $2$-fan in $F-R_a$; otherwise
another separator $a'\ne a$ gives
$R_{a'}\supseteq R_a\cup\{a\}$.

\smallskip
\noindent\textit{Claim 3. $|A|\le3$.}

If both classes contain two attachments, their separators
$a,b$ are distinct; equality would give a cut vertex.
The cut $\{a,b\}$ separates $B$ from $C$ and is good by
Lemma~\ref{lem:cuts}. Claim~1 gives $\{a,b\}=S$, say
$a=x,b=y$. Their fans are disjoint, as an intersection
bypasses a separator. They give odd $C$-ears $P_X,P_Y$
through $x,y$. Alternating ends, together with $C$ and a
root path in $J$, yield a $K_{3,3}$-subdivision, contradicting
planarity. Otherwise their ends occur as $a_1,a_2,b_1,b_2$.
The odd cycles $P_X\cup C[a_1,a_2]$, $P_Y\cup C[b_1,b_2]$,
the arc $C[a_2,b_1]$, and $J$ give three adjustable paths
meeting cyclically only at $x,b_1,y$. Each supplies two
consecutive length residues modulo $4$; their threefold
sum covers all residues, giving a forbidden cycle.

At most five vertices of $C$ are attachments or chord ends;
all others have degree $2$. Thus $\ell(C)\le10$.
Equality forces the five $2$-degree vertices to alternate
with the rest, putting $A$ in one class. Hence $\ell(C)=6$.
It has a chord: otherwise absence of adjacent $2$-degree
vertices forces three alternating attachments, again in one class.

\smallskip
\noindent\textit{Claim 4. $|A|=2$.}

If $|A|=3$, the cycle may be labeled
$C=x_1y_1x_2y_2x_3y_3x_1$ with $A\cap Y=\{y_3\}$.
The fan separator $a$ for $A\cap X$ and $y_3$ form a good
cut. Claim~1 gives $S=\{a,y_3\}$, with $a=x$.
If $A\cap X=\{x_1,x_3\}$, preventing $\{x_1,x_3\}$
from being a new good cut forces the chord to be $y_1y_3$
or $y_2y_3$, leaving adjacent $2$-degree vertices.
Thus, after reflection, $A\cap X=\{x_1,x_2\}$.
Similarly, the cut $\{x_2,y_3\}$ forces the chord to be
$x_1x_3$ or $y_1y_2$.

The fan paths $P_i$ from $x$ to $x_i$ have lengths $p_i$
with $p_1+p_2$ odd, since otherwise they form an even theta
with $C$. A chord $x_1x_3$ gives three
adjustable paths cyclically at $x,x_1,y_3$, using the
triangle $x_1y_3x_3x_1$, the odd cycle
$P_1\cup P_2\cup x_1y_1x_2$, and $J$.
For the chord $y_1y_2$, odd $p_1$ gives an even theta
consisting of $P_1+x_1y_3$, $P_2+x_2y_1y_2x_3y_3$,
and an even $(x,y_3)$-path in $J$.
For even $p_1$, its three paths are $P_1$,
$P_2+x_2y_2y_1x_1$, and an odd root path in $J$ followed
by $y_3x_1$. Both cases are impossible, proving Claim~4.

Finally, $A$ is good, so Claim~1 gives $A=S$ and $W=B$.
Thus $H$ is the six-cycle with its unique chord, say $y_1y_3$.
Its $X$-terminal is $x_1$, since otherwise $\{y_1,y_3\}$
cuts off the triangle $y_1x_1y_3y_1$ and is a new good cut.
Its $Y$-terminal is $y_2$, since either other choice leaves
adjacent $2$-degree vertices. Hence $H$ is the required copy of $T$.
\end{proof}

\begin{lemma}\label{lem:structure}
If a 2-connected $\mathcal C_{0\bmod4}$-free graph $G$
has a good cut and no graph switching-equivalent to $G$
has adjacent vertices of degree 2, then $G$ contains
induced subgraphs $Q_1,Q_2$ and vertex-disjoint paths
$P_1,P_2$ with the following properties.
\begin{enumerate}[label=\rm(\roman*)]
\item
For $i\in\{1,2\}$, the subgraph $Q_i$ is a triangle
with specified terminals $u_i,v_i$, or a copy
$T(x'_i,y'_i)$ with $\{x'_i,y'_i\}=\{u_i,v_i\}$.
The paths $P_1,P_2$ join $u_1,u_2$ and $v_1,v_2$,
respectively, and their interiors avoid $Q_1\cup Q_2$.
A path of length 0 identifies its ends.
These are the only identifications between $Q_1,Q_2$.

\item
The union $Q_1\cup Q_2\cup P_1\cup P_2$ is a spanning
subgraph of $G$.
Every edge of $G$ outside this subgraph joins $P_1$ and $P_2$.
The set $F$ of all edges of $G$ joining $P_1$ and $P_2$
satisfies $|F|\le2$.
\end{enumerate}
\end{lemma}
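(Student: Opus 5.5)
We apply Lemma~\ref{lem:terminal-side} twice, from opposite ends. Fix a good cut $S_0$ of $G$ and let $B_0$ be a component of $G-S_0$. Among good cuts disjoint from $V(B_0)$ (the cut $S_0$ itself qualifies), choose $S^{(1)}=\{u_1,v_1\}$ maximizing the component $B^{(1)}$ of $G-S^{(1)}$ containing $B_0$. Lemma~\ref{lem:terminal-side} then gives $Q_1:=G-B^{(1)}$, which is a triangle with terminals $u_1,v_1$ or a copy of $T$ with terminal set $S^{(1)}$. Since $S^{(1)}$ is a good cut and $Q_1$ is a side of it, the other side $J_1=G[V(B^{(1)})\cup S^{(1)}]$ contains an odd cycle.

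For the second application, take $B_0'$ to be the set of internal vertices of $Q_1$, that is, the component $Q_1-S^{(1)}$ of $G-S^{(1)}$. The cut $S^{(1)}$ is disjoint from $B_0'$, so we may choose a good cut $S^{(2)}=\{u_2,v_2\}$ disjoint from $B_0'$ that maximizes the component containing $B_0'$. The lemma gives $Q_2:=G-B^{(2)}$, again a triangle or a copy of $T$. By construction $Q_2$ lies in $J_1$.

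Next we show that $Q_1$ and $Q_2$ meet only in identified terminals. Both maximizing cuts satisfy Claim~1 of Lemma~\ref{lem:terminal-side}: the only good cut inside $V(Q_i)$ is $S^{(i)}$. If an internal vertex of $Q_2$ lay in $Q_1$, then $Q_2$ would contain all of $B_0'$. A direct inspection of the two possible shapes (triangle or $T$) shows this forces $S^{(1)}$ to be a good cut inside $V(Q_2)$ other than $S^{(2)}$, or else $Q_1=Q_2$. The second alternative contradicts the maximality of $B^{(2)}$, which must contain $B_0'$ together with a vertex of $S^{(1)}$.

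Let $M=G-(V(Q_1\cup Q_2)\setminus(S^{(1)}\cup S^{(2)}))$ be the middle part. Then $M$ together with the terminals forms a $2$-connected-like piece that is glued to the rest only at $u_1,v_1,u_2,v_2$.

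The core step is to show that $M$ consists of two paths $P_1$ (from $u_1$ to $u_2$) and $P_2$ (from $v_1$ to $v_2$), after a suitable pairing of the terminals, plus at most two cross edges. The approach is as follows.

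\begin{enumerate}
\item Every side beyond $Q_1$ and beyond $Q_2$ supplies an adjustable path between its terminals; for $T$ this is its odd $6$-cycle-with-chord structure, and for a triangle it is immediate.
\item Any odd cycle in $M$ would therefore create a good cut strictly between $S^{(1)}$ and $S^{(2)}$: take the chain ends of a nonbipartite block, as in the opening of the proof of Lemma~\ref{lem:terminal-side}. This contradicts the maximality of $B^{(1)}$. Hence $M$ is bipartite.
\item Moreover, no $2$-vertex cut separating $Q_1$ from $Q_2$ has a side of order at least $4$ inside $M$, by Lemma~\ref{lem:cuts}(i) and the absence of adjacent degree-2 vertices.
\item Using Menger's theorem, take two disjoint $S^{(1)}$--$S^{(2)}$ paths $P_1,P_2$ in $M$.
\item Every further vertex of $M$ lies on an $(P_1\cup P_2)$-ear. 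An ear with both ends on the same $P_i$, combined with the adjustable paths through $Q_1$ and $Q_2$, yields two consecutive residues of the cycle lengths and hence a forbidden cycle. This is the residue-sum argument from Claim~3 of Lemma~\ref{lem:terminal-side}. So every ear joins $P_1$ to $P_2$.
\item An ear of length at least $2$ creates either a good cut or a theta. Since $M$ is bipartite, that theta is even once it is combined with an adjustable path, so such ears are excluded. Therefore every remaining edge is a single edge between $P_1$ and $P_2$, and the union $Q_1\cup Q_2\cup P_1\cup P_2$ spans $G$.
\end{enumerate}

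Finally, we show $|F|\le2$ by the same residue argument. Three cross edges $e_1,e_2,e_3$ between $P_1$ and $P_2$, each pair together with the path segments, give three cycles. Closing these up with the adjustable paths through $Q_1$ or $Q_2$ forces every residue to appear, which is impossible. The only exceptions are configurations in which a cross edge lies inside a $4$-cycle, and those are already forbidden.

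We expect the main obstacle to be the middle-structure step, and in particular ruling out ears of length at least $2$ and a third cross edge. Here we will try first to reuse the ``three adjustable paths meeting cyclically'' residue argument and the planarity contradiction from Claim~3, treating the switching invariance to normalise the orientation of each $T$ copy.
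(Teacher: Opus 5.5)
Your second application of Lemma~\ref{lem:terminal-side}, with $B_0'=Q_1-S^{(1)}$, is a legitimate variant of the paper's symmetric choice on the two sides of $S_0$. Two parts of it need fixing. The intersection claim follows simply because each vertex of $S^{(1)}$ has a neighbour in $B_0'$, which misses $Q_2$. The case $S^{(1)}=S^{(2)}$ must be excluded by the terminal-path cycle of length $2+2$, $1+3$ or $4+4$.

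The first real gap is your Step~2, which is false. The middle part need not be bipartite, and an odd cycle there contradicts no maximality, because Claim~1 only forbids other good cuts inside $V(Q_i)$. Consider $L_{13}$, which satisfies the hypotheses: deleting an adjacent degree-2 pair from a graph in its orbit would leave a $\mathcal C_{0\bmod4}$-free graph with $11$ vertices and $16$ edges, i.e.\ $D=-2$. There $Q_1,Q_2$ are the two copies of $T$. The middle part contains the triangle $xpq$ formed by the $F$-edges $xp,xq$, and $\{x,p\}$, $\{x,q\}$ are good cuts between $Q_1$ and $Q_2$. So Step~6, which rests on bipartiteness, has no foundation.

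Step~5 also overclaims. The fixed residue of the total length $t$ of a disjoint linking pair (from $0\notin t+A_1+A_2$) only gives $\ell(L)\equiv\ell(R)$ when an ear $R$ replaces a segment $L$ of one $P_i$. Then $L\cup R$ has length $2\ell(R)$, which is forbidden only for even $R$; a chord of $P_1$ over a segment of length $5$ is not excluded. The paper handles this in two ways. It takes $P_1,P_2$ of minimum total length, which makes them induced and disposes of chords and odd shortcuts; your Menger step imposes no such choice. For a component outside the union, it produces an \emph{even} ear: by tripod parity when there are at least three attachments, and by Lemma~\ref{lem:cuts}(ii) when there are two. This ear either closes an even theta with even routes through $Q_1,Q_2$ or gives $\ell(L\cup R)\equiv0$.

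The more serious gap is that $|F|\le2$ cannot follow from residues and $4$-cycles alone. Take triangles $u_1v_1r_1$ and $u_2v_2r_2$, paths $P_1=u_1aa'u_2$ and $P_2=v_1b'bb''v_2$, and $F=\{u_1v_1,ab,u_2v_2\}$. This graph is a Hamiltonian $11$-cycle with three chords. It has the good cut $\{u_1,v_1\}$ and no two adjacent vertices of degree $2$. Its cycle lengths are $3,3,5,6,6,7,9,10,10,11$, so it is $\mathcal C_{0\bmod4}$-free with $|F|=3$.

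It is excluded only by the hypothesis on the whole switching orbit. Switching the component $\{v_1,r_1,b'\}$ at $\{u_1,b\}$ makes $u_1,b'$ adjacent vertices of degree $2$. This is exactly the paper's argument:
\begin{enumerate}
\item Minimality of $\ell(P_1)+\ell(P_2)$ and the absence of $4$-cycles force the $F$-edges into the same order along both paths.
\item Three consecutive $F$-edges give a cycle $C_1$ of length $5$ or $6$ whose path segments have interior vertices of degree $2$.
\item The switch at the $2$-cut $\{a_1,b_2\}$ then produces an adjacent degree-$2$ pair.
\end{enumerate}
Your plan uses switching only to normalise the orientation of the copies of $T$. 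It therefore omits the one step where the switching-orbit hypothesis, rather than a condition on $G$ itself, is indispensable.
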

\begin{proof}
For a good cut $S_0$, the two components of $G-S_0$
are denoted by $B_0,D_0$.
Among good cuts disjoint from $V(B_0)$, a cut $S_1$
maximizes the order of the component containing $B_0$.
A cut $S_2$ is chosen analogously with $B_0,D_0$
interchanged.
The other components at these cuts are denoted by
$H_1,H_2$, respectively.
By Lemma~\ref{lem:terminal-side}, the induced subgraphs
\[
 Q_i=G[V(H_i)\cup S_i]\qquad(i=1,2)
\]
are triangles or copies of $T$ with terminal set $S_i$. Each vertex of $S_0$ has a neighbor in both $B_0,D_0$.
Consequently,
\[
 V(H_1)\subseteq V(D_0),\qquad
 V(H_2)\subseteq V(B_0),\qquad
 V(Q_1)\cap V(Q_2)=S_1\cap S_2.
\]
If $S_1=S_2$, terminal paths in $Q_1,Q_2$ of lengths
$(2,2)$, $(1,3)$ or $(4,4)$, according to their types,
form a cycle of length divisible by 4.
Thus $|S_1\cap S_2|\le1$.

By 2-connectivity, there are two vertex-disjoint paths
joining $S_1$ to $S_2$.
When these sets intersect, one path is trivial at their
common vertex.
Among all such pairs, the paths $P_1,P_2$ have minimum
total length.
The terminal labels are chosen so that
\[
 S_i=\{u_i,v_i\},\qquad
 P_1:u_1\longrightarrow u_2,\qquad
 P_2:v_1\longrightarrow v_2.
\]
Both paths are induced, and their interiors avoid
$Q_1\cup Q_2$.
Indeed, a path passing through $Q_i-S_i$ would use both
vertices of $S_i$, leaving none for the other path.

For $i=1,2$, the terminal residue set is
\[
 A_i=\mathcal R(Q_i;u_i,v_i)=
 \begin{cases}
 \{1,2\},&Q_i\cong K_3;\\
 \{0,3\},&Q_i\cong T.
 \end{cases}
\]
Every pair of vertex-disjoint paths joining $S_1$ to $S_2$,
with total length $t$, therefore satisfies
\(
 0\notin t+A_1+A_2.
\)
Hence
\begin{equation}\label{eq:structure-link-residue}
 t\equiv
 \begin{cases}
 3\pmod4,&\text{if $Q_1,Q_2$ have the same type};\\
 1\pmod4,&\text{otherwise}.
 \end{cases}
\end{equation}

If the union $Q_1\cup Q_2\cup P_1\cup P_2$ is not spanning,
a component $H$ outside this union has a neighbor set
$A\subseteq V(P_1\cup P_2)$ with $|A|\ge2$.
For $|A|\ge3$, a minimal tree joining three vertices of
$A$ through $H$ has three internally disjoint paths
from a vertex of $H$ to them.
Two of these paths have the same parity.
Their union is an even path with distinct ends in $A$
and interior in $H$.
For $|A|=2$, such an even path exists by the argument
in the proof of Lemma~\ref{lem:cuts}(ii).
In either case, this path is denoted by $R$.

If the ends of $R$ lie on different paths $P_1,P_2$,
the routes through $Q_1$ and $Q_2$ can each be chosen
even, since each $A_i$ contains both parities.
Together with $R$, these routes form an even theta,
a contradiction.
If both ends lie on the same path, its segment $L$
between them can be replaced by $R$.
By \eqref{eq:structure-link-residue},
\[
 \ell(L)\equiv\ell(R)\pmod4.
\]
Since $R$ is even, the cycle $L\cup R$ has length
divisible by 4, again a contradiction.
Thus $Q_1\cup Q_2\cup P_1\cup P_2$ is spanning.

Every vertex of $Q_i-S_i$ has all its neighbors in $Q_i$.
Since $P_1,P_2$ are induced, every edge outside the spanning
subgraph joins $P_1$ and $P_2$.
The set of all edges joining these paths is denoted by $F$.
Both paths are oriented from $Q_1$ towards $Q_2$, and
\(
 s=\ell(P_1)+\ell(P_2).
\)
If $ab,cd\in F$, with $a$ preceding $c$ on $P_1$
and $d$ preceding $b$ on $P_2$, then
\[
 L=\ell(P_1[a,c])+\ell(P_2[d,b])\ge2.
\]
For $L=2$, these segments and the two edges form a 4-cycle.
For $L>2$, the paths
\[
 P_1[u_1,a]\cup ab\cup P_2[b,v_2],
 \qquad
 P_2[v_1,d]\cup dc\cup P_1[c,u_2]
\]
are vertex-disjoint and have total length $s-L+2<s$.
Both cases are impossible.
Thus the edges of $F$ have the same weak order along
the two paths.

If $|F|\ge3$, three distinct edges $a_ib_i\in F$
$(i=1,2,3)$ are chosen in this order along both paths
so that
\[
 \ell(P_1[a_1,a_3])+\ell(P_2[b_1,b_3])
\]
is minimum.
Exactly these three edges join the two displayed segments.
Otherwise, three consecutive edges among four would
give a smaller sum.

For $i=1,2$, the subgraph
\[
 C_i=P_1[a_i,a_{i+1}]
       \cup P_2[b_i,b_{i+1}]
       \cup\{a_ib_i,a_{i+1}b_{i+1}\}
\]
is a cycle.
If both $C_1,C_2$ are triangles, the outer edges
$a_1b_1,a_3b_3$ and the intervening segments form a 4-cycle.
After reversing both paths if necessary, we may therefore
assume that $\ell(C_1)\ge5$.

All internal vertices of the two path segments in $C_1$
have degree 2 in $G$.
If $a_1=a_2$ or $b_1=b_2$, the other segment has length
at least 3 and contains adjacent vertices of degree 2.
Otherwise, the absence of such vertices gives
\[
 1\le\ell(P_1[a_1,a_2])\le2,\qquad
 1\le\ell(P_2[b_1,b_2])\le2.
\]
At least one of these lengths is 2, since $\ell(C_1)\ge5$.

The ordering of the edges in $F$ and the choice of the
three edges imply that $\{a_1,b_2\}$ is a 2-vertex cut.
The component containing
$Q_1-\{a_1,b_2\}$ is denoted by $U$.
The only neighbor of $a_1$ outside $U$ is its successor
$a^+$ on $P_1$.
The only neighbor of $b_2$ in $U$ is its predecessor
$b^-$ on $P_2$.
In particular, $a_1b_2\notin E(G)$.
A switch at $\{a_1,b_2\}$ on $U$ produces a graph $G'$
with
\[
 N_{G'}(a_1)=\{a^+,b^-\}.
\]
At least one of $a^+,b^-$ is an internal vertex of a
segment of length 2 and still has degree 2 in $G'$.
Thus $G'$ has adjacent vertices of degree 2,
contrary to the hypothesis.
Consequently, $|F|\le2$.
\end{proof}

\begin{corollary}\label{cor:structure-counts}
For the decomposition in Lemma~\ref{lem:structure},
the parameters are given below, where
$s=\ell(P_1)+\ell(P_2)$ and $Q_1,Q_2$ may be interchanged.
\[
\begin{array}{c|c|c|c}
\text{types of }Q_1,Q_2&v(G)&e(G)&s\pmod4\\ \hline
K_3,K_3&s+4&s+6&3\\
K_3,T&s+7&\le s+11&1\\
T,T&s+10&\le s+16&3
\end{array}
\]
If $Q_1,Q_2$ have different types and $s=1$, then
$e(G)=11$.
\end{corollary}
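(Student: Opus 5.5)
We simply count vertices and edges in the decomposition of Lemma~\ref{lem:structure}, and read off $s\bmod 4$ from the residue condition \eqref{eq:structure-link-residue} established in that proof.

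\textbf{Vertex count.} By part (i), the only identifications among $Q_1,Q_2,P_1,P_2$ are at path ends and through paths of length $0$. By part (ii), the union is spanning. Hence the vertices are those of $Q_1$ and $Q_2$ together with the interior vertices of $P_1,P_2$. A path of length $\ell\ge1$ contributes $\ell-1$ interior vertices. A path of length $0$ contributes nothing, but it identifies one vertex of $S_1$ with one of $S_2$, which is equivalent to contributing $\ell-1=-1$. Thus in every case
\[
v(G)=v(Q_1)+v(Q_2)+s-2,
\]
with $v(K_3)=3$ and $v(T)=6$. This gives $s+4$, $s+7$ and $s+10$ for the three type combinations.

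\textbf{Edge count.} Part (ii) says every edge lies in some $Q_i$, in some $P_j$, or in $F$. The $Q_i$ are induced, share at most one vertex, and hence share no edge. Both paths are induced with interiors avoiding $Q_1\cup Q_2$. The remaining point is whether a path edge could coincide with an edge of $Q_i$. This cannot happen: an edge of $P_j$ inside $Q_i$ would join $u_i$ to $v_i$, using both vertices of $S_i$ on one path. Therefore
\[
e(G)=e(Q_1)+e(Q_2)+s+|F|\le e(Q_1)+e(Q_2)+s+2,
\]
with $e(K_3)=3$ and $e(T)=7$. This yields $\le s+8$, $\le s+12$ and $\le s+16$.

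To reach the stated bounds $s+6$ and $s+11$, we must show $F=\emptyset$ in the $(K_3,K_3)$ case and $|F|\le1$ in the mixed case.

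\textbf{Sharpening the bound on $F$.} This is the main obstacle. Take an edge $ab\in F$ with $a\in P_1$ and $b\in P_2$, neither being a common endpoint. Rerouting through $ab$ gives another pair of disjoint $S_1$--$S_2$ paths, namely $P_1[u_1,a]\cup ab\cup P_2[b,v_2]$ together with the complementary pair. Let $L=\ell(P_1[a,\cdot])+\ell(P_2[\cdot,b])$ be the relevant segment lengths. Minimality of $s$ forces $L$ to be small, and $t\equiv s$ is forced by \eqref{eq:structure-link-residue}. This should constrain $a,b$ to lie near $Q_1$ or $Q_2$.

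Combine this with cycles formed by $ab$, the path segments, and a terminal route in $Q_i$. The residue sets $A_i=\{1,2\}$ or $\{0,3\}$ are available here. The aim is to produce a cycle of length $0\bmod4$ or an even theta.

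In the $(K_3,K_3)$ case, each $A_i$ contains a short route of length $1$ or $2$. I expect a single chord already to yield a $0\bmod4$ cycle: check each position of $a,b$ relative to the triangles, using that $s\equiv3$. For two chords in the mixed case, the same weak-order argument from the lemma applies, together with the parities in $A_1+A_2$.

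\textbf{The case $s=1$.} Here one path is trivial, so $Q_1$ and $Q_2$ share a vertex, and the other path is a single edge. There are no interior path vertices, so $F$ consists only of edges between terminals. The single connecting edge is already counted in $s$. Hence $v(G)=8$ and $e(G)=3+7+1=11$, with no room for further edges.
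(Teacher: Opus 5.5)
Your vertex count and the congruences are fine, but the edge count double-counts, and this double count creates your ``main obstacle.'' In Lemma~\ref{lem:structure}(ii), $F$ is the set of \emph{all} edges of $G$ joining $P_1$ and $P_2$. When $Q_i$ is a triangle, its terminal edge $u_iv_i$ joins $u_i\in V(P_1)$ to $v_i\in V(P_2)$. It therefore lies in $F$ as well as in $E(Q_i)$. Hence your identity $e(G)=e(Q_1)+e(Q_2)+s+|F|$ is false whenever a triangle occurs. For two triangles it gives $s+8$, whereas the true value is $s+6$.

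You then try to prove a sharpening: $F=\varnothing$, respectively $|F|\le1$, for the edges lying outside $Q_1\cup Q_2$. That step is only a heuristic sketch (``I expect'', ``should constrain''), so as written the first two rows are not established. No such argument is needed. Count each $Q_i$ without its terminal edge:
\begin{itemize}
\item a triangle contributes one new vertex and $2$ edges;
\item a copy of $T$ contributes four new vertices and $7$ edges, and none of these lies in $F$, since its terminals are nonadjacent;
\item the two paths contribute $s+2$ vertices and $s$ edges.
\end{itemize}
Adding $|F|\le2$ gives the bounds $s+6$, $s+11$ and $s+16$ directly. For two triangles, $F\supseteq\{u_1v_1,u_2v_2\}$ forces $|F|=2$, which gives the equality $e(G)=s+6$.

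In the case $s=1$ you implicitly use the correct accounting, which contradicts your general formula: under that formula, the triangle's terminal edge in $F$ would push the count to at least $12$. Make the argument explicit. With the trivial path at $z$ and the edge $pq$, we have $F\subseteq\{zp,zq\}$. One of these edges is the triangle's terminal edge, already counted in $e(K_3)=3$. The other joins the terminals of the induced copy of $T$, so it is absent. ``No room for further edges'' is not by itself a reason.
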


\begin{proof}
The congruences follow from the residue calculation
in the proof of Lemma~\ref{lem:structure}.
The paths $P_1,P_2$ contain $s+2$ vertices and $s$ edges.
Apart from the edges in $F$, a triangle contributes
1 additional vertex and 2 edges, while a copy of $T$
contributes 4 additional vertices and 7 edges.
Together with $|F|\le2$, these counts give the table.
For two triangles, their terminal edges give $|F|=2$.

In the mixed case $s=1$, the paths are a trivial path
at $z$ and an edge $pq$.
Thus $F\subseteq\{zp,zq\}$.
One of these edges is the triangle's terminal edge.
The other joins the terminals of the induced copy of $T$
and is therefore absent.
Hence
\(
 e(G)=3+7+1=11.
\)
\end{proof}

The following analysis treats graphs without a good cut
and is also used in Section~\ref{sec:two-connected}.

\begin{lemma}\label{lem:core}
If a 2-connected $\mathcal C_{0\bmod4}$-free graph $G$ has no adjacent vertices of degree 2 and no good cut, then suppression of its $2$-degree vertices produces a simple 3-connected planar graph $K$. Each edge of $K$ corresponds to a path of length 1 or 2 in $G$.
\end{lemma}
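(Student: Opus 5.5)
Since $G$ has no adjacent vertices of degree 2, every thread of $G$ has length 1 or 2. Let $K$ be the multigraph obtained by suppressing all threads, i.e.\ all $2$-degree vertices. I would first dispose of the degenerate case where $G$ is a cycle. A cycle with no two adjacent $2$-degree vertices would have to be very short, which forces $G=K_3$. That graph is excluded here, or it is treated as the trivial case $K=K_3$ if the statement allows it.

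Otherwise $\delta(K)\ge3$, $K$ is loopless, and $K$ is 2-connected because $G$ is. Each edge of $K$ then corresponds to a thread of length 1 or 2, so the second sentence of the statement is immediate. Planarity of $K$ follows from Lemma~\ref{lem:planar}, since a topological minor of a planar graph is planar. It remains to show that $K$ is simple and 3-connected.

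\textbf{Simplicity.} Suppose two threads $P,P'$ join the same pair $x,y$. Each has length $1$ or $2$, and at most one has length $1$. If both have length 2, then $P\cup P'$ is a $4$-cycle, which is forbidden. So, up to relabelling, $P=xy$ and $P'=xwy$, and $P\cup P'$ is a triangle. Since $\deg x,\deg y\ge3$ in $G$ and $\deg w=2$, the set $\{x,y\}$ is a 2-vertex cut of $G$: the vertex $w$ forms one component, and a nonempty remainder exists because $\deg x\ge3$ and $G$ is not a triangle. Now $xy\in E(G)$, so Lemma~\ref{lem:cuts} makes $\{x,y\}$ a good cut. This contradicts the hypothesis, so $K$ is simple.

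\textbf{3-connectivity.} This is the main step. Suppose $\{x,y\}$ is a 2-vertex cut of $K$. The plan is to lift it to a good cut of $G$. Since $K$ has minimum degree at least 3 and is simple, each component of $K-\{x,y\}$ has at least one vertex. A component consisting of a single vertex $u$ must send all of its at least three edges to $\{x,y\}$, which requires parallel edges. Hence every component has at least two vertices. In $G$, the set $\{x,y\}$ is still a 2-vertex cut, since threads joining the sides would be edges of $K$ joining the sides. Each component of $G-\{x,y\}$ contains a corresponding component of $K-\{x,y\}$, together with thread interiors, so each has at least two vertices.

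One subtlety remains. A thread $xwy$ joining $x$ to $y$ directly would give a component $\{w\}$ of size 1. Such a thread corresponds to an edge $xy$ of $K$ together with at least two further components, which Lemma~\ref{lem:cuts}(ii) rules out. Alternatively, it gives exactly two components, one of which is $\{w\}$, while $K-\{x,y\}$ would then have only one component, a contradiction. So all components of $G-\{x,y\}$ have at least two vertices, and Lemma~\ref{lem:cuts} gives that $\{x,y\}$ is a good cut of $G$, again a contradiction.

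A second case is a 2-cut of $K$ that lifts in $G$ to a cut using a subdivision vertex. This case does not arise, since we work with branch vertices only. We also need $v(K)\ge4$. Otherwise $K$ is a simple graph on at most 3 vertices with $\delta\ge3$, which is impossible.

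The main obstacle I expect is the bookkeeping that every component of $G-\{x,y\}$ has at least two vertices, particularly when a length-2 thread joins $x$ and $y$. This is handled by counting components as above and invoking Lemma~\ref{lem:cuts}(ii).
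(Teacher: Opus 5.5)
Your proof is correct and follows essentially the same route as the paper. Threads have length 1 or 2. Simplicity follows by excluding a $4$-cycle and using the edge $xy$ with Lemma~\ref{lem:cuts} to obtain a good cut. Three-connectivity follows by lifting a $2$-cut of $K$ to $G$ and combining Lemma~\ref{lem:cuts}(ii) with its final assertion, and planarity comes from Lemma~\ref{lem:planar}.

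One small correction: no cycle, $K_3$ included, satisfies the hypothesis, since consecutive vertices of a cycle are adjacent vertices of degree 2. So $G$ is simply not a cycle, and your hedging about $K_3$ is unnecessary.
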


\begin{proof}
Since $G$ has no adjacent vertices of degree 2, it is not a cycle, and each thread has length 1 or 2. The 2-connectivity of $G$ ensures that each thread has distinct ends of degree at least 3. Thus the multigraph $K$ is connected and loopless, has no cut vertex, and satisfies $\delta(K)\ge3$.

If two edges of $K$ are parallel, their corresponding threads cannot both have length 1, since $G$ is simple, or both have length 2, since $G$ contains no 4-cycle. They are therefore $xy$ and $xuy$ for some vertex $u$.
Since $G\ne K_3$, the pair $\{x,y\}$ is a 2-vertex cut of $G$.
The edge $xy$ makes this cut good by
Lemma~\ref{lem:cuts}, a contradiction.
Hence $K$ is simple and $v(K)\ge4$.

If $\{x,y\}$ were a 2-vertex cut of $K$,
every component of $K-\{x,y\}$ would have at least 2 vertices, since $K$ is simple and $\delta(K)\ge3$. Their expansions lie in distinct components of $G-\{x,y\}$, each with at least 2 vertices. Lemma~\ref{lem:cuts} implies that these are its only 2 components and that $\{x,y\}$ is a good cut, a contradiction. Thus $K$ is 3-connected.
The graph $G$ is planar by   Lemma~\ref{lem:planar}, and suppression preserves planarity. Therefore $K$ is planar.
\end{proof}

In a simple 3-connected plane graph, face boundaries are induced cycles, and two distinct face boundaries intersect
in at most one vertex or one edge
\cite[Propositions~4.2.6 and~4.2.7]{Diestel}
\cite[Lemma~3]{FHRV}.

\begin{lemma}\label{lem:faces}
If a 2-connected plane $\mathcal C_{0\bmod4}$-free graph $G$
has no adjacent vertices of degree 2 and no good cut, then
\begin{enumerate}[label=\rm(\roman*)]
\item
$G$ has at most one triangular face.
\item
Any two distinct pentagonal face boundaries intersect
in one vertex or a path of length 2.
\item
No three pentagonal face boundaries have singleton
pairwise intersections.
\item
If three pentagonal face boundaries intersect pairwise
in paths of length 2, then $G$ contains a triangle-star.
\end{enumerate}
Consequently, if $G$ is locally reduced throughout its
switching orbit, the pairwise intersections of any three
distinct pentagonal face boundaries are not all of
the same type.
\end{lemma}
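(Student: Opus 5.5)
The plan is to work inside the simple 3-connected plane graph $K$ supplied by Lemma~\ref{lem:core}. Faces of $G$ correspond to faces of $K$, and each edge of $K$ expands to a thread of length 1 or 2. Two distinct face boundaries of $G$ therefore meet in at most one vertex or in one expanded edge, that is, a path of length 1 or 2. The basic tool throughout is that two faces sharing a path $L$ give a theta made of $L$ and the two complementary arcs. No even theta is allowed, and no cycle may have length divisible by 4. I would also use that every good-cut-free $\{x,y\}$ separation forces the conclusions of Lemma~\ref{lem:cuts}: if removing two vertices separates a side with at least 4 vertices, or if $xy$ is an edge, we obtain a good cut, which is excluded.

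For (i), suppose there are two triangular faces. By 3-connectivity of $K$ they share at most an edge. If they share an edge $xy$, the other two vertices together with $x,y$ form a $K_4$ minus an edge, which contains a 4-cycle. So they share at most a vertex. I would then take two disjoint paths between the triangles, or a path from the shared vertex. Each triangle gives both residues 1 and 2 between any two of its vertices, and a route through two triangles realizes four consecutive-type sums, so some cycle has length divisible by 4.

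For (ii), let two pentagonal faces $F_1,F_2$ share a path $L$ of length $1$. The outer arcs then have length 4 each, and $F_1\triangle F_2$ is an 8-cycle, which is forbidden. If instead they share a path of length 2, the theta has paths of lengths $2,3,3$ and the cycles have lengths $5,5,6$, which is allowed. Two pentagons sharing an edge are thus impossible, leaving a single vertex or a length-2 path.

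For (iii), suppose three pentagons meet pairwise in single vertices $a,b,c$. The union of their arcs between these vertices builds a cycle through $a,b,c$, and each pentagon offers two arcs between its two marked vertices, of lengths $\{1,4\}$ or $\{2,3\}$. Summing a choice from each pentagon should always hit residue 0. Where the distances give only $\{2,3\}$ per pair, the sums $6,\dots,9$ already include $8$. The obstacle is the split distributions; there I would also use the enclosed region and 3-connectivity of $K$ to rule out the remaining geometries.

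For (iv), suppose the pairwise intersections are paths $a u b$, $b v c$, $c w a$ of length 2. Planarity and 3-connectivity should force these three paths to share a common central vertex $z$, so the paths are $zua$, $zvb$, $zwc$ and $z$ has degree 3. The outer arcs then complete a triangle $abc$, because each pentagon has exactly one remaining edge. Checking that $u,v,w$ have degree 2 and $\deg z=3$ yields a triangle-star.

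For the final sentence, suppose all three intersections are of the same type. Part (iii) excludes the singleton type. The length-2 type yields a triangle-star by (iv), and a locally reduced graph contains none. By (ii) no other type exists.

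I expect the main obstacle to be the residue bookkeeping in (iii) and the planarity argument in (iv) forcing the three length-2 paths to share a common central vertex.
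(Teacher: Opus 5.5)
\textbf{Assessment.} Your shared-edge case in (ii) and the final deduction are fine. Parts (i)--(iii), however, have genuine gaps, and the key step of (iv) is asserted rather than proved.

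\textbf{Part (i).} The residue count is wrong. Two triangles give $\{1,2\}+\{1,2\}=\{0,2,3\}$, which is only three residues, not four. So one connecting path of length $p$ (shared vertex) only forces $p\equiv3\pmod 4$. Likewise, two disjoint connecting paths of total length $t$ (disjoint triangles) only force $t\equiv3\pmod 4$. Neither gives a contradiction.

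The missing ingredient is the following. For odd cycles $C,D$ with $\ell(C)\equiv\ell(D)\pmod 4$, the two even arc-sums add to $\ell(C)+\ell(D)\equiv2$, so $\{0,2\}\subseteq\mathcal R(C;a,b)+\mathcal R(D;c,d)$. Hence two disjoint such faces cannot be joined by three disjoint paths, since two of the paths have even total length. The 3-connectivity of $K$ supplies three such paths.

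For triangles $xabx$ and $xcdx$ sharing $x$, you need two disjoint paths $P\colon a\to c$ and $Q\colon b\to d$ in the 2-connected graph $K-x$. Each is forced to have length $\equiv3\pmod 4$, and then $P\cup Q\cup\{ab,cd\}$ is a cycle of length $\equiv0\pmod 4$.

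\textbf{Part (ii).} You never exclude two \emph{disjoint} pentagonal faces; you only rule out a shared edge. The disjoint case needs exactly the three-path argument above.

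\textbf{Part (iii), distinct vertices.} There is no ``split distribution'' obstacle. Arc lengths $\{1,4\}$ and $\{2,3\}$ give residue pairs $\{0,1\}$ and $\{2,3\}$, both of the form $\{r_i,r_i+1\}$. Therefore $\sum_{i=1}^{3}\{r_i,r_i+1\}=\mathbb Z/4\mathbb Z$ immediately.

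\textbf{Part (iii), common vertex.} The real omission is the case where the three singleton intersections coincide. If two coincide, all three do, so this means three pentagons through one vertex $x$. Then your cycle through three marked vertices does not exist, and this is the hardest part of the lemma. The paper handles it as follows.
\begin{itemize}
\item It deletes $x$ from $K$ and takes the boundary $\Gamma$ of the merged face of $K-x$. This yields paths $Q_i$ joining consecutive boundaries $C_i$ and $C_{i+1}$.
\item It uses the $\{0,2\}$ identity, which forbids an even path between $C_i-x$ and $C_j-x$. This shows that every $Q_i$ and every arc $A_i$ of $C_i$ avoiding $x$ is odd.
\item Two same-parity arcs from $x$ to the points $b_i$ then complete an even theta with the resulting cycle.
\end{itemize}

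\textbf{Part (iv).} The phrase ``planarity and 3-connectivity should force a common centre'' needs the actual argument.
\begin{itemize}
\item The threads $C_1\cap C_2$ and $C_1\cap C_3$ each contain 3 of the 5 vertices of $C_1$, so they meet.
\item They share no edge, and do not meet at an internal vertex, because an edge or a degree-2 vertex lies on only two faces.
\item They do not share both ends, since that would give a 4-cycle. So they share exactly one end $z$.
\item If $C_2\cap C_3$ avoided $z$, its ends would be $a,b$, giving the 4-cycle $zuabz$. Your initial labelling $aub,bvc,cwa$ is exactly this excluded configuration.
\item Finally, $\deg_G(z)=3$ follows from the rotation at $z$, in which each pair of $zu,zv,zw$ is consecutive.
\end{itemize}
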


\begin{proof}
For the embedding inherited from $G$, the graph $K$
in Lemma~\ref{lem:core} is a simple 3-connected plane graph.
Each edge of $K$ expands to a thread of length 1 or 2.
The preceding face-intersection property therefore gives
\[
 C\cap D=\varnothing
 \quad\text{or}\quad
 C\cap D\text{ is a path of length }0,1,\text{ or }2
\]
for distinct face boundaries $C,D$ of $G$.
A common path of length 2 is a thread.

For odd cycles $C,D$ with
$\ell(C)\equiv\ell(D)\pmod4$ and distinct vertices
$a,b\in V(C)$, $c,d\in V(D)$, we have
\begin{equation}\label{eq:face-arc-sums}
 \{0,2\}\subseteq
 \mathcal R(C;a,b)+\mathcal R(D;c,d).
\end{equation}
Indeed, the two even arc sums have total length
$\ell(C)+\ell(D)\equiv2\pmod4$.
Thus two disjoint such cycles cannot be joined by
3 vertex-disjoint paths, since two of these paths
have an even total length.
The same identity excludes an even path between
$C-x$ and $D-x$ whose interior avoids $C\cup D$
when $V(C)\cap V(D)=\{x\}$.

(i) Two triangular faces sharing an edge form a 4-cycle. A triangular boundary contains no thread of length 2, since its suppression would produce parallel edges in $K$. Thus triangular boundaries are also triangles in $K$.

If two such boundaries are disjoint, 3-connectivity of $K$ supplies 3 vertex-disjoint paths between them. Their expansions contradict the preceding observation. 
If the boundaries are $xabx$ and $xcdx$, the graph $K-x$ is 2-connected. Menger's theorem supplies two vertex-disjoint paths
between $\{a,b\}$ and $\{c,d\}$.
After relabelling, their expansions are an $(a,c)$-path $P$ and a $(b,d)$-path $Q$, with interiors outside the triangles.
For $p=\ell(P)$, the two triangles and $P$ contain cycles of lengths $p+2,p+3,p+4$.
Hence $p\equiv3\pmod4$, and similarly
$\ell(Q)\equiv3\pmod4$.
But $P\cup Q\cup\{ab,cd\}$ is then a cycle of length $p+\ell(Q)+2\equiv0\pmod4$.

(ii) Two pentagonal boundaries sharing one edge form an 8-cycle. The argument for disjoint triangles applies equally
to disjoint pentagonal boundaries.
The intersection property above leaves only
a common vertex or a common thread of length 2.

(iii) For three pentagonal boundaries $C_1,C_2,C_3$ with singleton pairwise intersections, the intersection vertices are either all distinct or all equal.
In the first case, the two arcs between the intersection vertices on each $C_i$ have residue set $\{r_i,r_i+1\}$ for some $r_i$.
Independent choices of these arcs form cycles with residues
\[
 \sum_{i=1}^{3}\{r_i,r_i+1\}
 =r_1+r_2+r_3+\{0,1,2,3\}
 =\{0,1,2,3\},
\]
a contradiction.

In the second case, their common vertex $x$ belongs to $K$, since a $2$-degree vertex is incident with only 2 faces. Deletion of $x$ merges its incident faces into a face
of the 2-connected plane graph $K-x$.
Its boundary is a cycle $\Gamma$.
The corresponding boundaries of $C_1,C_2,C_3$ in $K$, with $x$ deleted, are vertex-disjoint nontrivial subpaths of $\Gamma$.

The three intervening arcs of $\Gamma$ expand to paths $Q_1,Q_2,Q_3$ whose interiors avoid all three boundaries. Their ends are labelled so that $Q_i$ joins $b_i\in V(C_i)$ to $a_{i+1}\in V(C_{i+1})$, with indices taken modulo 3. Each $Q_i$ is odd by the observation following
\eqref{eq:face-arc-sums}.
For the $(a_i,b_i)$-arc $A_i$ of $C_i$ avoiding $x$, an even length would make
$Q_{i-1}\cup A_i\cup Q_i$ an even path between
the other two boundaries, again a contradiction. Thus all $A_i,Q_i$ are odd.

Their union is a cycle in which each segment
$Q_i\cup A_{i+1}$ between $b_i,b_{i+1}$ is even. The three arcs from $x$ to $b_i$ in $C_i$ avoiding $a_i$ meet this cycle only at their respective ends. Two of these arcs have the same parity. Their union and the two intervening routes on the cycle
form an even theta, a contradiction.

(iv) The shared threads $C_1\cap C_2$ and $C_1\cap C_3$ have no common edge, since an edge is incident with only 2 faces.
Their vertex sets intersect because each contains 3 vertices of the pentagon $C_1$.
A common vertex cannot be internal to either thread, since it would have degree 2 and belong to 3 faces. Two common ends would give a 4-cycle. Thus the threads have the forms
\[
 C_1\cap C_2=zua,\qquad C_1\cap C_3=zvb,
\]
and the remaining edge of $C_1$ is $ab$.

If $C_2\cap C_3$ avoids $z$, its ends must be $a,b$. The remaining edge of $C_2$ is then $zb$, giving the 4-cycle $zuabz$.
Hence
\[
 C_2\cap C_3=zwc.
\]
The vertices $u,v,w$ are distinct and have degree 2. The vertices $a,b,c$ are distinct, since equality would create a 4-cycle.
None is an internal vertex of another shared thread, since that vertex would belong to 3 faces.
At $z$, the three faces make each pair of
$zu,zv,zw$ consecutive in the cyclic order.
Therefore $\deg_G(z)=3$. The remaining facial edges are $ab,bc,ca$, so these vertices and edges form a triangle-star.

The final assertion follows from (iii) and (iv), since a locally reduced graph contains no triangle-star.
\end{proof}
For Lemmas~\ref{lem:pentagons} and~\ref{lem:nogood},
the graph $G\ne K_3$ is
2-connected and $\mathcal C_{0\bmod4}$-free, is locally
reduced throughout its switching orbit, and has no good cut.
For a fixed plane embedding, the numbers of vertices,
edges and faces are abbreviated to $v,e,f$, and the
deficits to $D,d$.

The graph $K$ in Lemma~\ref{lem:core} inherits a plane embedding from $G$.
Its \emph{plane dual} $K^*$ has one vertex for each face of $K$, including the unbounded face. Each edge $e\in E(K)$ corresponds to an edge $e^*\in E(K^*)$ whose ends represent the two faces incident with $e$. Each dual vertex lies in its corresponding face, and $e^*$ crosses $e$ once and no other edge of $K$.
The graph $K^*$ is simple and 3-connected;
see~\cite{Diestel}. A loop or parallel pair in $K^*$ would correspond to a bond of $K$ of size at most 2, which 3-connectivity excludes.
For each $a\in E(K^*)$, its weight $w(a)\in\{1,2\}$ is the length of the corresponding thread in $G$. For $F\subseteq E(K^*)$ and $x\in V(K^*)$, the total weight
and weighted degree are
\[
 w(F)=\sum_{a\in F}w(a),\qquad
 d_w(x)=\sum_{a\ni x}w(a).
\]
The weighted degrees are the face lengths of $G$.

A \emph{bond} is an inclusion-minimal nonempty edge cut.
For $\varnothing\ne X\subsetneq V(K^*)$, its edge cut is
\[
 \delta_{K^*}(X)=\{xy\in E(K^*):x\in X,\ y\notin X\}.
\]
This is a bond exactly when $K^*[X]$ and $K^*-X$ are connected;
see~\cite[Section~1.9]{Diestel}.
By planar cycle--cut duality, a bond of $K^*$ corresponds to
a cycle of $K$; see~\cite[Proposition~4.6.1]{Diestel}.
Its expansion in $G$ has length equal to the bond weight,
which is therefore nonzero modulo 4.
The cut weights satisfy
\begin{equation}\label{eq:dual-cut-weight}
 w\bigl(\delta_{K^*}(X)\bigr)
 =\sum_{x\in X}d_w(x)-2w\bigl(E(K^*[X])\bigr).
\end{equation}
For every $xy\in E(K^*)$, the cut $\delta_{K^*}(\{x,y\})$ is
a bond by 3-connectivity, and its weight is
\begin{equation}\label{eq:dual-pair-weight}
 d_w(x)+d_w(y)-2w(xy).
\end{equation}

\begin{lemma}\label{lem:pentagons}
Let $G$ be a 2-connected plane $\mathcal C_{0\bmod4}$-free graph. If $G$ is locally reduced throughout its switching orbit and has no good cut, then $G$ has at most 4 pentagonal faces and satisfies $d(G)\ge2$.
\end{lemma}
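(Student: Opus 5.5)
We plan to combine Euler's formula with a face-length count, using Lemma~\ref{lem:faces} to limit the short faces. Since $G$ is $\mathcal C_{0\bmod4}$-free, no face has length divisible by 4. In particular there are no faces of length 4 or 8, and every face has length 3, 5, 6, 7, or at least 9.

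\emph{Step 1: at most 4 pentagonal faces.} Let $C_1,\dots,C_m$ be the pentagonal face boundaries. By Lemma~\ref{lem:faces}(ii), every pair meets in a single vertex or in a thread of length 2. Colour each pair of the complete graph $K_m$ red or blue according to this type. By the final assertion of Lemma~\ref{lem:faces}, there is no monochromatic triangle. Since $R(3,3)=6$, this gives $m\le5$.

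To exclude $m=5$, note that the only two-colouring of $K_5$ without a monochromatic triangle is two complementary $5$-cycles. The red $5$-cycle then consists of five pentagons, cyclically sharing length-2 threads. We would show this is impossible with a short local argument in the dual $K^*$. The pentagonal faces sharing threads are adjacent dual vertices of weighted degree 5 joined by weight-2 edges. Five such vertices forming a dual $5$-cycle give, by \eqref{eq:dual-cut-weight}, a cut of weight $25-2\cdot10=5$. Combining this with the pair bonds \eqref{eq:dual-pair-weight} of weight $5+5-4=6$, and with the blue singleton intersections, we expect a bond of weight divisible by 4, or a 4-cycle in $G$. If this local argument resists, we would instead show directly that the blue intersections force all five pentagons around a common configuration producing an even theta. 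We expect this exclusion of $m=5$ to be the main obstacle.

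\emph{Step 2: $d(G)\ge2$.} Let $f_k$ be the number of faces of length $k$. Then $2e=\sum_k kf_k$, and by Lemma~\ref{lem:faces}(i), $f_3\le1$; Step~1 gives $f_5\le4$. Every other face has length at least 6, so
\[
 2e\ge 6f-3f_3-f_5\ge 6f-7 .
\]
Euler's formula $f=e-v+2$ then gives $2e\ge6e-6v+5$, that is, $4e\le6v-5$. This is only $2e\le3v-2.5$, i.e.\ $d\ge1.5$, so $d\ge2$ by integrality, because $d=3v-1-2e$ is an integer.

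Checking: $2e\le 3v-5/2$ gives $d=3v-1-2e\ge 3/2$, hence $d\ge2$. The bounds $f_3\le1$ and $f_5\le4$ are thus exactly what is needed, which explains the threshold 4 in the statement.
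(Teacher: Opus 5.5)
You have not actually excluded $m=5$, and everything else rests on it: Step~2 needs $f_5\le4$, so as written the lemma is not proved. The parts you did carry out are correct and match the paper: the reduction to $m\le5$ via $R(3,3)=6$, the observation that $m=5$ forces two complementary $5$-cycles, and the Euler count in Step~2. (You apply integrality to $d$ rather than to $e$, which is equivalent.)

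The bonds you compute give no contradiction. The cut $\delta_{K^*}(S)$ has weight $5$ and the pair bonds have weight $6$, and neither is $0\bmod4$. The missing idea is to take $X$ to be four consecutive vertices of the dual $5$-cycle. Blue pairs share no edge, so they are nonadjacent in $K^*$, and $K^*[S]$ is exactly the red cycle with all edge weights $2$. Then \eqref{eq:dual-cut-weight} gives
\[
w\bigl(\delta_{K^*}(X)\bigr)=4\cdot5-2\cdot3\cdot2=8.
\]
Once $\delta_{K^*}(X)$ is known to be a bond, this yields an $8$-cycle in $G$.

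For the bond property you need both $K^*[X]$ and $K^*-X$ to be connected. The first is a path. For the second:
\begin{itemize}
\item Each $x\in S$ has $d_w(x)=5$, with weight $4$ already on its two cycle edges. So it has exactly one further dual edge, of weight $1$. Hence $|\delta_{K^*}(S)|=5$ and $K^*-S\ne\varnothing$.
\item By 3-connectivity of $K^*$, every component of $K^*-S$ has at least three neighbours in $S$. Otherwise at most two vertices would separate it from the rest of $S$.
\item Two components would therefore need at least six edges of $\delta_{K^*}(S)$. So $K^*-S$ is connected.
\item The fifth vertex of $S$ has its outgoing edge into $K^*-S$, so $K^*-X$ is connected.
\end{itemize}
By cycle--bond duality, $\delta_{K^*}(X)$ then corresponds to a cycle of $K$ whose expansion in $G$ has length $8$, which is forbidden. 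Your fallback about an even theta around a common configuration is too vague to replace this argument.
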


\begin{proof}
If $G$ has at least 5 pentagonal faces, any 5 of them
define a red--blue edge coloring of $K_5$.
Red edges correspond to shared paths of length 2,
and blue edges to singleton intersections.
By Lemma~\ref{lem:faces}, neither color contains a triangle.
Any vertex with 3 neighbors of one color would yield
a triangle of the other color.
Hence every vertex has degree 2 in each color,
and each color forms a 5-cycle.

For the corresponding set $S\subseteq V(K^*)$, we have
\[
 |S|=5,\qquad K^*[S]\cong C_5,\qquad
 w(a)=2\quad(a\in E(K^*[S])),\qquad
 d_w(x)=5\quad(x\in S).
\]
Each vertex of $S$ is therefore incident with exactly
one edge of $\delta_{K^*}(S)$, whose weight is 1.
In particular,
\[
 K^*-S\ne\varnothing,\qquad |\delta_{K^*}(S)|=5.
\]
By 3-connectivity, every component of $K^*-S$ has at least
3 neighbors in $S$.
Thus $K^*-S$ is connected, since otherwise
$|\delta_{K^*}(S)|\ge6$.

For a set $X$ of 4 consecutive vertices of $K^*[S]$,
the graph $K^*[X]$ is a path.
The graph $K^*-X$ is also connected, since the remaining
vertex of $S$ has a neighbor in $K^*-S$.
Hence $\delta_{K^*}(X)$ is a bond.
However, \eqref{eq:dual-cut-weight} gives
\[
 w\bigl(\delta_{K^*}(X)\bigr)
 =\sum_{x\in X}d_w(x)-2w\bigl(E(K^*[X])\bigr)
 =4\cdot5-2\cdot3\cdot2=8,
\]
contrary to $\mathcal C_{0\bmod4}$-freeness.

The numbers $f_3,f_5$ of triangular and pentagonal faces
satisfy $f_3\le1$ and $f_5\le4$.
Since every other face has length at least 6,
\[
 2e\ge6f-3f_3-f_5\ge6f-7.
\]
The integrality of $e$ yields $e\ge3f-3$.
Euler's formula therefore gives
\[
 d(G)=3v-1-2e=e-3f+5\ge2.
\]
\end{proof}

\section{The general case}\label{sec:completeness}

\begin{lemma}\label{lem:nogood}
Let $G$ be a 2-connected
$\mathcal C_{0\bmod4}$-free graph of order $\ge 4$. If $G$ is locally reduced throughout its switching orbit,
has no good cut and satisfies $D(G)\le11$,
then $G\cong R_9$.
\end{lemma}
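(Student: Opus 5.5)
We combine the identity $D(G)=v(G)-13+6d(G)$ with Lemma~\ref{lem:pentagons}. Since $G$ is locally reduced throughout its switching orbit, has no good cut and (being of order $\ge4$) is not $K_3$, Lemma~\ref{lem:pentagons} gives $d(G)\ge2$. Hence
\[
 D(G)\ge v(G)-13+12=v(G)-1,
\]
and $D(G)\le11$ forces $v(G)\le12$. Moreover, $d(G)\ge3$ would give $D(G)\ge v(G)+5$, so $v(G)\le6$. The plan is therefore to show first that $d(G)=2$. We would then use the sharp face count in the proof of Lemma~\ref{lem:pentagons} to pin down the face structure. Finally, we would identify $G$ among the finitely many candidates of order at most $12$.

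\textbf{Step 1: small orders.} We dispose of the small cases. By Lemma~\ref{lem:core}, suppressing the $2$-degree vertices gives a simple $3$-connected planar graph $K$, so $v(K)\ge4$ and $K\supseteq$ a subdivision of $K_4$. For $d(G)\ge3$ we need $v(G)\le6$. Then $K$ is $K_4$, a $5$-vertex $3$-connected planar graph, or an octahedron-type graph on $6$ vertices, with a few threads of length $2$. We would check these directly for a $4$-cycle, or for a $2$-path creating one. For example, $K_4$ contains $C_4$, and subdividing edges of $K_4$ produces cycles of length $4$ or $8$ quickly. With so few vertices, the freeness condition fails in every case. This step is routine.

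\textbf{Step 2: $d(G)=2$.} Euler's formula gives $d=e-3f+5$, and $2e=\sum_F \ell(F)$. Write $\sum_F(\ell(F)-6)=2e-6f=2d-10=-6$. Lemma~\ref{lem:faces} allows at most one triangular face (contributing $-3$), and Lemma~\ref{lem:pentagons} allows at most four pentagonal faces (each contributing $-1$). Faces of length $4$ or $8$ are forbidden. Thus $-3f_3-f_5+\sum_{\ell\ge 6}(\ell-6)=-6$. The possibilities are:
\begin{itemize}
\item $f_3=1$, with $f_5=3$ and all other faces hexagons, or with $f_5=4$ and exactly one heptagon;
\item $f_3=0$, which would require $f_5\ge6$, impossible.
\end{itemize}
So $G$ has exactly one triangle, three or four pentagons, and otherwise hexagons plus possibly one heptagon. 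Counting with $v\le12$ keeps $f=e-v+2$ small; $e=(3v-3)/2$ forces $v$ odd, so $v\in\{7,9,11\}$.

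\textbf{Step 3: identifying $R_9$.} Lemma~\ref{lem:faces}(ii)–(iv) constrains how the pentagons meet: pairwise in a vertex or a $2$-thread, with no monochromatic triple. We use the dual $K^*$ with weights, together with the bond condition (weights nonzero mod $4$, including \eqref{eq:dual-pair-weight} for adjacent faces). For instance, the triangle adjacent to a pentagon across an edge gives pair weight $3+5-2=6$, which is allowed, while across a $2$-thread the weight is $4$, which is forbidden. We then enumerate the small configurations for $v=7,9,11$. The expected outcome is that only $R_9$ survives:
\begin{itemize}
\item $v=9$, $e=12$, $f=5$: one triangle $hcd$, three pentagons and the hexagonal outer face, exactly as in the figure;
\item $v=7$: this would be $S_7$-like, but $S_7$ is itself a triangle-star and so is excluded by local reducedness;
\item $v=11$: this should be ruled out by the bond-weight constraints, or by producing a good cut or adjacent degree-$2$ vertices after a switch.
\end{itemize}

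The main obstacle is Step 3, specifically the $v=11$ case and confirming uniqueness for $v=9$. The plan there is to run the dual-weight cut computation \eqref{eq:dual-cut-weight} on unions of the triangle and pentagon faces, producing a bond of weight divisible by $4$.
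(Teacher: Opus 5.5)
Your reduction is sound and arrives at the same finite list as the paper: $d=2$, $v=2f-1$ with $4\le f\le 6$, and face lengths $(3,5,5,5)$, $(3,5,5,5,6)$, $(3,5,5,5,5,7)$ or $(3,5,5,5,6,6)$. Your Step 1 enumeration is unnecessary. Euler's formula gives $v=2f+d-3$ and hence $D=2f+7d-16$, so $f=v(K^*)\ge4$ forces $d=2$ and $f\le6$ at once. The genuine gap is Step 3. All the content of the lemma lies there, and you only describe an expected outcome. Observing that $R_9$ has face vector $(3,5,5,5,6)$ does not show it is the only such graph. Calling the $f=4$ case ``$S_7$-like'' does not show that a triangle-star is present, and ``should be ruled out'' for $f=6$ is not an argument.

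What is missing is two local facts in the weighted dual, followed by a short case analysis.
\begin{enumerate}
\item Since $d_w(t)=3$ and $\delta(K^*)\ge3$, we get $\deg_{K^*}(t)=3$, and all edges at $t$ have weight $1$.
\item Every edge $xy$ of $K^*$ joining two pentagon vertices has weight $2$. Otherwise $\delta_{K^*}(\{x,y\})$ is a bond of weight $5+5-2=8$ by \eqref{eq:dual-pair-weight}.
\end{enumerate}
With these, the cases go as follows.
\begin{itemize}
\item For $f=4$, $K^*\cong K_4$, so the three pentagons pairwise share $2$-threads. Lemma~\ref{lem:faces}(iv) then gives a triangle-star.
\item For $f=5$, let $R$ count the edges among the pentagons $p,q,r$, and let $\varepsilon$ indicate $ta\in E(K^*)$. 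Summing their weighted degrees gives $15=4R+(3-\varepsilon)+(6-\varepsilon)$, hence $\varepsilon=1$ and $R=2$. The degree conditions then pin $K^*$ down to a wheel with hub the hexagon and prescribed weights. Its dual is $R_9$; this is the uniqueness you need.
\item For $(3,5,5,5,5,7)$, the heptagon vertex $b$ is not adjacent to $t$, since that bond would have weight $3+7-2=8$. It is joined to pentagons only by weight-$1$ edges, since otherwise $7+5-4=8$. Hence $7=d_w(b)=\deg_{K^*}(b)\le4$, a contradiction.
\item For $(3,5,5,5,6,6)$, a similar count together with Lemma~\ref{lem:faces} forces exactly one edge among the pentagons. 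One then exhibits a three-vertex set $X\ni t$ with $K^*[X]$ and $K^*-X$ connected and $w(\delta_{K^*}(X))=14-2\cdot3=8$ via \eqref{eq:dual-cut-weight}.
\end{itemize}
Until these cases are carried out, the proposal does not prove the lemma.
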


\begin{proof}
Since $K^*$ is simple and 3-connected, $f=v(K^*)\ge4$.
Euler's formula and the definitions of $D,d$ give
\[
 e=3f+d-5,\qquad v=2f+d-3,\qquad D=2f+7d-16.
\]
By Lemma~\ref{lem:pentagons}, $d\ge2$.
Together with $D\le11$ and $f\ge4$, these identities imply
\[
 d=2,\qquad 4\le f\le6,\qquad
 e=3f-3,\qquad v=2f-1.
\]

If $G$ had no triangular face, Lemma~\ref{lem:pentagons}
would give
\[
 2e\ge6f-4>6f-6=2e.
\]
Hence $G$ has a unique triangular face by
Lemma~\ref{lem:faces}.
The remaining face lengths are at least 5 and sum to
$6f-9$, so the possibilities are
\[
\begin{array}{c|l}
f&\text{face lengths}\\ \hline
4&(3,5,5,5)\\
5&(3,5,5,5,6)\\
6&(3,5,5,5,5,7)\ \text{or}\ (3,5,5,5,6,6).
\end{array}
\]
The vertex of $K^*$ corresponding to the triangular face
is denoted by $t$.
Since $d_w(t)=3$ and $\delta(K^*)\ge3$,
\[
 \deg_{K^*}(t)=3,\qquad
 w(tx)=1\quad(x\in N_{K^*}(t)).
\]
For every $xy\in E(K^*)$ with $d_w(x)=d_w(y)=5$,
we have $w(xy)=2$.
Otherwise, \eqref{eq:dual-pair-weight} would give
a bond of weight $5+5-2=8$.

\medskip\noindent
\emph{Case 1. $f=4$.}
The graph $K^*$ is isomorphic to $K_4$, and all three
edges of $K^*-t$ have weight 2.
The three pentagonal face boundaries therefore intersect
pairwise in paths of length 2.
By Lemma~\ref{lem:faces}, $G$ contains a triangle-star,
contrary to local reduction.

\medskip\noindent
\emph{Case 2. $f=5$.}
The vertices of $K^*-t$ are labelled $p,q,r,a$ so that
\[
 d_w(p)=d_w(q)=d_w(r)=5,\qquad d_w(a)=6.
\]
The parameters
\[
 R=|E(K^*[\{p,q,r\}])|,\qquad
 \varepsilon=\mathbf1_{\{ta\in E(K^*)\}}
\]
satisfy
\[
 15=4R+(3-\varepsilon)+(6-\varepsilon),
 \qquad 4R=6+2\varepsilon.
\]
Thus $\varepsilon=1$ and $R=2$.
The labels $p,q,r$ are chosen so that
\[
 E(K^*[\{p,q,r\}])=\{pq,qr\}.
\]
Since
\[
 N_{K^*}(p)\subseteq\{q,t,a\},\qquad
 N_{K^*}(r)\subseteq\{q,t,a\},\qquad
 \delta(K^*)\ge3,
\]
the weighted degrees of $p,r$ give
\[
 w(tp)=w(tr)=1,\qquad w(ap)=w(ar)=2.
\]
Consequently, $N_{K^*}(t)=\{a,p,r\}$, and $d_w(q)=5$
forces $w(aq)=1$.
Thus $K^*$ is a wheel with hub $a$ and rim $tpqrt$, with
\[
\begin{aligned}
 (w(tp),w(pq),w(qr),w(rt))&=(1,2,2,1),\\
 (w(at),w(ap),w(aq),w(ar))&=(1,2,1,2).
\end{aligned}
\]
In the dual wheel $K$, the rim vertices $A,B,C,D$
correspond respectively to the faces $apq,aqr,art,atp$,
and the hub $h$ corresponds to the rim face of $K^*$.
The four edges subdivided in $G$ are $BC,DA,hA,hB$.
Hence $G\cong R_9$.

\medskip\noindent
\emph{Case 3. The face lengths are $(3,5,5,5,5,7)$.}
The vertex of weighted degree 7 is denoted by $b$.
An edge $tb$ would have weight 1 and give a bond of
weight $3+7-2=8$, so $tb\notin E(K^*)$.
Every neighbor $x$ of $b$ therefore has $d_w(x)=5$.
By \eqref{eq:dual-pair-weight}, $w(bx)=1$, since
weight 2 would give $7+5-4=8$.
It follows that
\[
 7=d_w(b)=\deg_{K^*}(b)\le4,
\]
a contradiction.

\medskip\noindent
\emph{Case 4. The face lengths are $(3,5,5,5,6,6)$.}
The vertices of $K^*-t$ are labelled $p,q,r,a,b$ so that
\[
 d_w(p)=d_w(q)=d_w(r)=5,\qquad
 d_w(a)=d_w(b)=6.
\]
If $ab\in E(K^*)$, then $w(ab)=1$, since weight 2 would
give $6+6-4=8$ in \eqref{eq:dual-pair-weight}.
The parameters
\[
 R=|E(K^*[\{p,q,r\}])|,\qquad
 s=|N_{K^*}(t)\cap\{p,q,r\}|,\qquad
 z=\mathbf1_{\{ab\in E(K^*)\}}
\]
satisfy $R\in\{1,2\}$ by Lemma~\ref{lem:faces}.
Moreover, $\deg_{K^*}(t)=3$ gives $s\ge1$.
The total weight $A$ of the edges joining $\{p,q,r\}$
to $\{a,b\}$ satisfies
\[
 15=4R+s+A,\qquad
 12=3-s+2z+A.
\]
Hence
\[
 s=3-2R+z.
\]
Since $s\ge1$ and $z\le1$, we obtain $R=1$ and $s=1+z$.
The labels $p,q,r$ are chosen so that
\[
 E(K^*[\{p,q,r\}])=\{pq\}.
\]
The inclusions $N_{K^*}(r)\subseteq\{t,a,b\}$ and
$\deg_{K^*}(r)\ge3$, together with $d_w(r)=5$, give
\[
 N_{K^*}(r)=\{t,a,b\},\qquad
 w(tr)=1,\qquad w(ra)=w(rb)=2.
\]

If $z=0$, then $s=1$ and $N_{K^*}(t)=\{r,a,b\}$.
For each $x\in\{p,q\}$, both $xa,xb$ are present,
since the edge $pq$ contributes only weight 2
to $d_w(x)=5$.
At $a$, we have
\[
 w(ap)+w(aq)=6-w(at)-w(ar)=3.
\]
After interchanging $p,q$ if necessary, $w(aq)=2$.

If $z=1$, then $s=2$, and the labels may be chosen
so that $N_{K^*}(t)=\{r,p,a\}$.
Since $tq\notin E(K^*)$ and $w(pq)=2$,
both $qa,qb$ are present.

In either case, the following table specifies a set $X$
and a spanning tree of $K^*-X$.
\[
\begin{array}{c|c|c|c}
z&X&E(K^*[X])&\text{spanning tree of }K^*-X\\ \hline
0&\{t,a,q\}&\{ta,aq\}&\{bp,br\}\\
1&\{t,r,b\}&\{tr,rb\}&\{aq,pq\}
\end{array}
\]
Both $K^*[X]$ and $K^*-X$ are connected,
so $\delta_{K^*}(X)$ is a bond.
In each row,
\[
 \sum_{x\in X}d_w(x)=14,\qquad
 w(E(K^*[X]))=3.
\]
Equation~\eqref{eq:dual-cut-weight} now gives
\[
 w(\delta_{K^*}(X))=14-2\cdot3=8,
\]
a contradiction.

\medskip\noindent
Only Case 2 remains, and hence $G\cong R_9$.
\end{proof}

\begin{lemma}\label{lem:reduce}
If $G$ is a 2-connected $\mathcal C_{0\bmod4}$-free graph
with $D(G)\le11$, then $G$ is switching-equivalent to one of
$K_3,S_7,L_8,R_9,L_{13}$, or to a graph containing adjacent
vertices of degree 2.
Unless $G=K_3$, every such pair forms the internal vertices
of a 3-edge ear with distinct ends.
\end{lemma}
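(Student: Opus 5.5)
The plan is to split according to what the switching orbit of $G$ contains, and to read off each case from the structural lemmas already proved. In every case we use the identity $D=v-13+6d$.

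\emph{Setting aside the easy outcomes.} If some graph in the switching orbit has adjacent vertices of degree 2, the first assertion holds, and only the ``moreover'' part needs checking; we treat it at the end. Suppose instead that some graph $G'$ in the orbit contains a triangle-star. Whitney switches preserve $v$, $e$, 2-connectivity and $\mathcal C_{0\bmod4}$-freeness. Deleting the triangle-star gives $H$, and Lemma~\ref{lem:triangle-star} says that either $H=K_3$ or $d(G')\ge4$. In the second case $D(G')=v-13+6d\ge v+11>11$, which is impossible. Hence $H=K_3$ and $G'\cong S_7$. We may now assume that $G$ is locally reduced throughout its switching orbit. If $G$ has no good cut, then Lemma~\ref{lem:nogood} gives $G\cong R_9$ when $v\ge4$. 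When $v\le3$, $G=K_3$, since a 2-connected simple graph on at most 3 vertices is $K_3$.

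\emph{The good-cut case.} Apply Lemma~\ref{lem:structure} and Corollary~\ref{cor:structure-counts}, and feed each row of the table into $D\le11$.
\begin{itemize}
\item For two triangles, $d=s-1$ and $D=7s-15$. With $s\equiv3\pmod4$ this forces $s=3$, so $v=7$ and $e=9$.
\item In the mixed case, $d\ge s-2$ gives $D\ge7s-18$. Then $s=1$, and the corollary gives $e=11$ and $v=8$.
\item For two copies of $T$, $d\ge s-3$ gives $D\ge7s-21$, hence $s=3$ and $v=13$. Moreover $e\le19$, and $e=19$ is needed, because $e=18$ already gives $D=12$. So $|F|=2$.
\end{itemize}
In each case the configurations are few: a split of $s$ into $\ell(P_1)+\ell(P_2)$ together with the admissible edge set $F$, whose edges are weakly ordered along the paths and create no $4$-cycle. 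We enumerate them, discarding every configuration that has adjacent degree-2 vertices, since those belong to the already settled outcome. The survivors should be recognised as follows.
\begin{itemize}
\item The two-triangle survivors are copies of $S_7$ up to switching. For example, with lengths $(0,3)$ the two edges of $F$ join the shared vertex to both interior vertices of $P_2$.
\item The mixed survivor is $L_8$, or its other terminal orientation, which is switching-equivalent to $L_8$ by Remark~\ref{rem:l8-switch}.
\item The $T,T$ survivors are exactly constructions A, B and C, with path lengths $(0,3)$, $(1,2)$ and $(1,2)$ respectively. Proposition~\ref{prop:switch} then gives switching-equivalence to $L_{13}$.
\end{itemize}
I expect this enumeration to be the main obstacle. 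The bookkeeping of where the two $F$-edges can sit, especially for $T,T$ with $(\ell(P_1),\ell(P_2))\in\{(0,3),(1,2)\}$, must use the forbidden $4$-cycles and the residue congruence \eqref{eq:structure-link-residue}, and it must not miss any configuration.

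\emph{The ``moreover'' part.} Let $G\ne K_3$ contain adjacent degree-2 vertices. If $G$ is a cycle $C_n$, then $D=7n-19\le11$ forces $n=3$, which is excluded. So the thread through the pair has distinct ends of degree at least 3, and it is an ear of some length $k\ge3$. Removing its interior leaves a connected $\mathcal C_{0\bmod4}$-free graph $H'$. By Lemma~\ref{lem:sound}, $D(G)=D(H')+7k-19$, and $D(H')\ge0$ by \eqref{eq:bound}; hence $k\le4$. To exclude $k=4$: then $D(H')\le2$. Since $D$ is additive over blocks and a bridge already contributes $7$, every block of $H'$ is one of $K_3$, $L_8$ or $L_{13}$ up to switching, using the classification just obtained together with the minimality of $D$. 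In these few graphs one checks directly that $0\in\mathcal R(H';a,b)$ for all distinct $a,b$. This is the second place where care is needed. Lemma~\ref{lem:sound} then rules out adding an ear of length 4, so $k=3$, and the pair forms the internal vertices of a 3-edge ear with distinct ends, as claimed.
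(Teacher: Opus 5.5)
The reduction via the orbit, triangle-star deletion, Lemma~\ref{lem:nogood} and the table of Corollary~\ref{cor:structure-counts} is the paper's route, and your mixed and $T,T$ cases match it. The first error is in the two-triangle case, which has no survivors at all; nothing there is switching-equivalent to $S_7$. By Corollary~\ref{cor:structure-counts}, $F$ consists exactly of the terminal edges $u_1v_1,u_2v_2$. So your $(0,3)$ example with $F=\{xp,xq\}$ is not admissible: it needs four $P_1$--$P_2$ edges and contains the $4$-cycle $xv_1pqx$. The genuine $(0,3)$ configuration has the adjacent degree-2 pair $p,q$. The $(1,2)$ configuration $Q_i=u_iv_ir_iu_i$, $P_1=u_1u_2$, $P_2=v_1pv_2$ has no adjacent degree-2 vertices, so your filter keeps it. Yet it is not switching-equivalent to $S_7$. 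The only 2-vertex cuts of $S_7$ are $\{z,a\},\{z,b\},\{z,c\}$, and every switch of $S_7$ returns a copy of $S_7$. This graph is not isomorphic to $S_7$, since its degree-2 vertices $r_1,r_2,p$ have no common neighbour. The missing step is the switch at $\{u_2,v_1\}$ on the component with vertex set $\{v_2,r_2,p\}$. It gives $N(u_2)=\{u_1,p\}$ and $N(p)=\{u_2,v_2\}$, contradicting local reduction throughout the orbit. $S_7$ itself has no good cut and arises only from your triangle-star step.

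The second error is that you read the last sentence as saying the maximal thread through the pair has length exactly $3$, and that strengthening is false. Your key claim, $0\in\mathcal R(H';a,b)$ for all distinct $a,b$ in $K_3$, $L_8$, $L_{13}$, already fails for $K_3$, where $\mathcal R(K_3;a,b)=\{1,2\}$. Concretely, adding a 4-edge ear between two vertices of $K_3$ gives a 2-connected graph whose cycles have lengths $3,5,6$ only. It has $v=6$ and $e=7=\lfloor 95/12\rfloor$, i.e.\ $D=11$, and it contains a thread of length $4$. The same happens for $L_8$ with a 4-edge ear between $r,s$, where $\mathcal R(L_8;r,s)=\{1,2\}$. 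Your appeal to ``minimality of $D$'' to pin down the blocks of $H'$ is also unsupported. The statement only asserts the following: if $u,v$ are adjacent of degree 2 with other neighbours $a,b$, then $a\ne b$, so $auvb$ is a 3-edge ear relative to $G'-\{u,v\}$. This is immediate, because $a=b$ would be a cut vertex, and it already follows from your remark that the thread has distinct ends. Drop the $k\le3$ argument entirely.
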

\begin{proof}
The assertion holds for $G=K_3$.
For adjacent $2$-degree vertices $u,v$ in a graph
$G'\ne K_3$ switching-equivalent to $G$, their other
neighbors $a,b$ satisfy
\[
 N_{G'}(u)=\{a,v\},\qquad N_{G'}(v)=\{u,b\}.
\]
If $a=b$, this vertex is a cut vertex of $G'$,
contrary to 2-connectivity.
Thus $auvb$ is a 3-edge ear with distinct ends.
It remains to consider $G\ne K_3$ whose switching orbit
contains no graph with adjacent vertices of degree 2.

If a graph $G'$ in this orbit contains a triangle-star,
Lemma~\ref{lem:triangle-star} gives a 2-connected
$\mathcal C_{0\bmod4}$-free graph $H$ with
\[
 d(H)=d(G),\qquad D(H)=D(G)-4\le7.
\]
For $H\ne K_3$, the same lemma gives $d(H)\ge4$, whence
\[
 D(H)=v(H)-13+6d(H)\ge v(H)+11>7,
\]
a contradiction.
Hence $H=K_3$, and restoring the deleted triangle-star
gives $G'\cong S_7$.
Only the case in which $G$ is locally reduced throughout
its switching orbit remains.

If $G$ has no good cut, Lemma~\ref{lem:nogood} gives
$G\cong R_9$.
Otherwise, the subgraphs $Q_1,Q_2$ and paths $P_1,P_2$
are as in Lemma~\ref{lem:structure}.
The paths join $u_1,u_2$ and $v_1,v_2$, respectively,
and are labelled so that $\ell(P_1)\le\ell(P_2)$.
For $s=\ell(P_1)+\ell(P_2)$,
Corollary~\ref{cor:structure-counts} gives
\[
\begin{array}{c|c|c|c}
\text{types of }Q_1,Q_2&D(G)&d(G)&s\pmod4\\ \hline
K_3,K_3&7s-15&s-1&3\\
K_3,T&\ge7s-18&\ge s-2&1\\
T,T&\ge7s-21&\ge s-3&3
\end{array}
\]
The bound $D(G)\le11$ therefore forces $s=3,1,3$,
respectively.

If $Q_1\cong Q_2\cong K_3$, then $u_1v_1$ and $u_2v_2$
are the only edges joining $P_1$ and $P_2$.
The path lengths are $(0,3)$ or $(1,2)$.
For $(0,3)$, the internal vertices of $P_2$ form
an adjacent $2$-degree pair.
For $(1,2)$, the subgraphs and paths have the form
\[
 Q_i=u_iv_ir_iu_i\quad(i=1,2),\qquad
 P_1=u_1u_2,\qquad P_2=v_1pv_2.
\]
The components of $G-\{u_2,v_1\}$ have vertex sets
$\{v_2,r_2,p\}$ and $\{u_1,r_1\}$.
A switch on the first component at $\{u_2,v_1\}$
produces a graph $G'$ with
\[
 E(G')=
 \bigl(E(G)\setminus\{u_2v_2,u_2r_2,v_1p\}\bigr)
 \cup\{v_1v_2,v_1r_2,u_2p\}.
\]
In particular,
\(
 N_{G'}(u_2)=\{u_1,p\}\) and \(
 N_{G'}(p)=\{u_2,v_2\}.
\)
Both possibilities contradict the assumption
on the switching orbit.

If $Q_1\cong K_3$ and $Q_2\cong T$, then $s=1$ and
$e(G)=11$ by Corollary~\ref{cor:structure-counts}.
The vertices can be labelled so that
\[
 Q_1=xrsx,\qquad P_1=x,\qquad P_2=ry,
\]
where $x,y$ are the terminals of $Q_2$.
These subgraphs account for all 11 edges of $G$.
For $Q_2=T(x',y')$, the correspondence is either
\(
 (x',y')=(x,y)\) or \(
 (x',y')=(y,x).
\)
Both graphs are switching-equivalent to $L_8$ by
Remark~\ref{rem:l8-switch}.

Finally, if $Q_1\cong Q_2\cong T$, then $s=3$ and
\[
 v(G)=13,\qquad D(G)=6d(G).
\]
Since $d(G)\ge0$ by \eqref{eq:bound-b} and
$d(G)=38-2e(G)$ is even, $D(G)\le11$ forces
\[
 d(G)=0,\qquad e(G)=19.
\]
The subgraph $Q_1\cup Q_2\cup P_1\cup P_2$
has $2\cdot7+3=17$ edges.
Thus exactly two additional edges join $P_1$ and $P_2$.
For $i\in\{1,2\}$, the copy $Q_i=T(x_i,y_i)$ satisfies
\[
 (u_i,v_i)\in\{(x_i,y_i),(y_i,x_i)\}.
\]

If $(\ell(P_1),\ell(P_2))=(0,3)$, then
\[
 P_1=x,\qquad P_2=v_1pqv_2,\qquad u_1=u_2=x.
\]
For each $i$, the edge $xv_i$ would form a 4-cycle
with a terminal path of length 3 in $Q_i$.
The two additional edges are therefore $xp,xq$,
giving construction~A.

If $(\ell(P_1),\ell(P_2))=(1,2)$, then
\(
 P_1=u_1u_2\) and \( P_2=v_1pv_2.
\)
The same obstruction excludes $u_1v_1,u_2v_2$.
The remaining candidate edges are
\[
 e_1=u_1p,\qquad e_2=u_1v_2,\qquad
 e_3=u_2v_1,\qquad e_4=u_2p.
\]
The pairs $\{e_1,e_3\}$ and $\{e_2,e_4\}$ give
the 4-cycles
\(
 u_1u_2v_1pu_1\) and \( u_1u_2pv_2u_1,
\)
respectively.
The pair $\{e_2,e_3\}$, together with a terminal path
of length 3 in each $Q_i$, gives an 8-cycle.
Hence the two additional edges form one of
\[
 \{e_1,e_2\},\qquad \{e_3,e_4\},\qquad \{e_1,e_4\}.
\]
The first two pairs give construction~B, up to
interchanging $Q_1,Q_2$, and the last gives
construction~C.
Consequently, $G\in\mathcal Z$, and
Proposition~\ref{prop:switch} implies that $G$ is
switching-equivalent to $L_{13}$.
\end{proof}

The class $\mathcal G$ in Theorem~\ref{thm:main} is given by
the following construction.
\begin{definition}\label{def:class-G}
The class $\mathcal G$ consists of the graphs obtained from
$K_1,K_2,K_3,S_7,L_8,R_9,L_{13}$ by finitely many applications
of the following operations. Every input must already have
been constructed. An operation with 2 inputs uses disjoint
copies before the stated identification.
\begin{enumerate}[label=\textbf{G\arabic*.},ref=G\arabic*]
\item\label{rule:G1}
The 2 inputs are joined by identifying one vertex of each,
with no other identifications.

\item\label{rule:G2}
A Whitney switch is performed in a 2-connected block.

\item\label{rule:G3}
For distinct vertices $a,b\in V(H)$ of an input $H$ with
$1\notin\mathcal R(H;a,b)$, a 3-edge ear is added between them.
\end{enumerate}
The output $H$ of every operation must satisfy $D(H)\le11$.
\end{definition}

\begin{lemma}\label{lem:bookkeeping}
Let $G$ be a nonempty $\mathcal C_{0\bmod4}$-free graph.
\begin{enumerate}[label=\rm(\roman*)]
\item
If $G$ has $c$ components, then $D(G)\ge19(c-1)$.
If $G$ is connected, then its block set $\mathfrak B(G)$
satisfies
\[
 \sum_{B\in\mathfrak B(G)}(v(B)-1)=v(G)-1,\qquad
 D(G)=\sum_{B\in\mathfrak B(G)}D(B).
\]
Consequently, every extremal graph is connected,
and all its blocks are extremal.

\item
If $G$ is 2-connected, $D(G)\le11$, and
$uv\in E(G)$ satisfies $\deg_G(u)=\deg_G(v)=2$,
then $H=G-\{u,v\}$ is connected and extremal, with
\(
 D(H)=D(G)-2.
\)
\end{enumerate}
\end{lemma}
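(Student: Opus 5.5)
For (i), I will argue directly from the definition $D(G)=19(v(G)-1)-12e(G)$ and the general bound $D\ge0$ of Theorem~\ref{thm:known-general}, applied to each nonempty $\mathcal C_{0\bmod4}$-free piece.

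First, let $G$ have components $G_1,\dots,G_c$. Since $v(G)=\sum v(G_j)$ and $e(G)=\sum e(G_j)$, we get
\[
 D(G)=\sum_{j}D(G_j)+19(c-1).
\]
Each $G_j$ is nonempty and $\mathcal C_{0\bmod4}$-free, so $D(G_j)\ge0$, and hence $D(G)\ge19(c-1)$.

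Next, for connected $G$, I will prove the identity $\sum_B(v(B)-1)=v(G)-1$ by induction on the number of blocks. I remove a terminal block $B$; it shares exactly one vertex with the union of the remaining blocks, which is again connected. Equivalently, one can count incidences in the block-cut tree. Since every edge lies in exactly one block, $e(G)=\sum_B e(B)$. Substituting both identities into the definition of $D$ gives $D(G)=\sum_B D(B)$.

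Now suppose $G$ is extremal, so $D(G)\le11$. Then $19(c-1)\le11$ forces $c=1$, so $G$ is connected. Every block is a nonempty $\mathcal C_{0\bmod4}$-free graph, so $D(B)\ge0$ for each block. The block sum then gives $D(B)\le D(G)\le11$ for every block $B$, and so every block is extremal by the equivalence noted in Section~\ref{sec:preliminaries}.

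For (ii), let $a$ and $b$ be the other neighbours of $u$ and $v$. If $G=K_3$, then $u$ and $v$ cannot both have degree $2$ unless the third vertex also does. In that case $H=K_1$, which is connected with $D(K_1)=0=D(K_3)-2$, so the claim holds.

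Otherwise, the argument in the first paragraph of the proof of Lemma~\ref{lem:reduce} applies. If $a=b$, then $a$ would be a cut vertex of $G$, so $a\ne b$, and $auvb$ is a 3-edge ear of $G$ with distinct ends attached to $H$. Lemma~\ref{lem:sound} with $k=3$ then gives
\[
 D(G)-D(H)=7\cdot3-19=2,
\]
so $D(H)=D(G)-2$.

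It remains to show $H$ is connected. Any component of $H$ avoiding both $a$ and $b$ would also be a component of $G$. Since $G$ is connected, every component of $H$ therefore contains $a$ or $b$. If $H$ had two components, one containing $a$ and one containing $b$, then $G$ would be $H$ plus a single path joining them. In that case every edge of that path would be a bridge of $G$, contradicting 2-connectivity. Hence $H$ is connected. As a subgraph of $G$ it is $\mathcal C_{0\bmod4}$-free, and $D(H)\le11-2\le11$, so $H$ is extremal.

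The only delicate points are the degenerate case $G=K_3$ and making sure the ear has distinct ends; I expect no serious obstacle.
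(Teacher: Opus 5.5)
Your proof is correct, and part (i) is the same argument as the paper's: additivity of $D$ over components and over blocks, combined with $D\ge0$ for each piece.

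In (ii) you take a different, more hands-on route to the connectedness of $H$:
\begin{itemize}
\item you split off $G=K_3$;
\item you use 2-connectivity to get $a\ne b$ and invoke Lemma~\ref{lem:sound} for the deficit change;
\item you then rule out a second component of $H$, because the edges of $auvb$ would be bridges.
\end{itemize}
The paper instead counts directly that deleting $u,v$ removes 2 vertices and 3 edges. This count holds whether or not $a=b$, so it needs neither the case split nor Lemma~\ref{lem:sound}, and it gives $0\le D(H)=D(G)-2\le 9$. Connectedness then follows from part (i), since a disconnected $H$ would have $D(H)\ge 19$.

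The paper's version is shorter and shows that this conclusion needs only $D(G)\le 11$, not 2-connectivity. Yours is equally valid, but it spends the 2-connectivity hypothesis on a step the deficit bound already handles.
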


\begin{proof}
(i) For the components $G_1,\ldots,G_c$,
\eqref{eq:bound} gives
\[
 D(G)=19(c-1)+\sum_{i=1}^cD(G_i)\ge19(c-1).
\]
For connected $G$, the block-cut tree and the partition
of $E(G)$ into block edge sets give the two identities.
Since $D(B)\ge0$ for every block, $D(G)\le11$ forces
$c=1$ and $D(B)\le11$ for every $B\in\mathfrak B(G)$.

(ii) The graph $H$ is nonempty and
$\mathcal C_{0\bmod4}$-free.
The deletion removes 2 vertices and 3 edges, so
\[
 0\le D(H)=D(G)-38+36=D(G)-2\le9.
\]
Together with (i), this shows that $H$ is connected
and extremal.
\end{proof}

\begin{proof}[Proof of Theorem~\ref{thm:main}]
The seeds are connected and $\mathcal C_{0\bmod4}$-free,
and their deficits are at most 11 by
Table~\ref{tab:seeds} and $D(K_1)=0$.
Every operation preserves connectedness.
The operation G1 also preserves $\mathcal C_{0\bmod4}$-freeness,
because every cycle lies in one input.
For G2 this follows from the preservation properties of
Whitney switches, and for G3 from
Lemma~\ref{lem:sound} and $1\notin\mathcal R(H;a,b)$.

For an output $G$ with one input $H$ or 2 inputs $H_1,H_2$,
the vertex and edge counts give
\[
 D(G)=
 \begin{cases}
 D(H_1)+D(H_2),&\text{under G1};\\
 D(H),&\text{under G2};\\
 D(H)+2,&\text{under G3}.
 \end{cases}
\]
The condition on each output therefore gives
$0\le D(G)\le11$ for every $G\in\mathcal G$, which is
equivalent to extremality.

For the converse, membership in $\mathcal G$ is proved
by induction on $v(G)$ for an extremal graph $G$.
The case $G=K_1$ is a seed.
By Lemma~\ref{lem:bookkeeping}, the graph $G$ is connected
and all its blocks are extremal.
If $|\mathfrak B(G)|\ge2$, induction supplies a construction
of each block. Their attachment according to the block-cut
tree reconstructs $G$.
Every intermediate graph $H$ satisfies
\[
 D(H)=\sum_{B\in\mathfrak B(H)}D(B)
 \le\sum_{B\in\mathfrak B(G)}D(B)=D(G)\le11,
\]
so each application of G1 is permitted.
If the unique block is a bridge, then $G=K_2$ is a seed.

For a 2-connected graph $G$,  Lemma~\ref{lem:reduce}
supplies a switching-equivalent graph $G'$ that is a seed or contains adjacent vertices of degree 2. If $G'$ is a seed, inverse switches construct $G$ by G2. Otherwise, Lemma~\ref{lem:bookkeeping} supplies a connected
extremal graph $H$ with
\[
 v(H)=v(G)-2,\qquad D(H)=D(G)-2.
\]
Induction gives $H\in\mathcal G$.
The deleted path is a 3-edge ear with distinct ends $a,b$.
An $(a,b)$-path in $H$ of length $1\bmod4$ would complete a forbidden cycle with this ear, so $1\notin\mathcal R(H;a,b)$.
The operation G3 therefore reconstructs $G'$, with $D(G')=D(G)\le11$.
The graph $G$ is recovered by reversing the switches. Since the deficit remains equal to $D(G)\le11$ throughout, each application of G2 is permitted.
\end{proof}

\begin{corollary}\label{cor:block-order}
Let $G$ be a nonempty extremal $\mathcal C_{0\bmod4}$-free graph.
Then every block of $G$ has order at most $23$, and every block of even order has order at most $18$.
\end{corollary}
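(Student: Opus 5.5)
By Lemma~\ref{lem:bookkeeping}(i), every block $B$ of $G$ is extremal, so $D(B)\le11$. Bridges and $K_1$ are trivial, so we take $B$ to be 2-connected. The plan is to combine $D(B)\le11$ with the identity
\[
 D(B)=v(B)-13+6d(B)
\]
and with $d(B)\ge0$ from \eqref{eq:bound-b}. Since $d(B)\ge0$, we get $v(B)=D(B)+13-6d(B)\le D(B)+13\le 24$.

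To reach $23$ we need a parity argument. The quantity
\[
 d(B)=3v(B)-1-2e(B)
\]
has the parity of $v(B)-1$. If $v(B)$ is even, then $d(B)$ is odd, so $d(B)\ge1$. This gives
\[
 v(B)\le D(B)+13-6\le 11+7=18.
\]
If $v(B)$ is odd, then $d(B)$ is even. Either $d(B)\ge2$, giving $v(B)\le 11+13-12=12$, or $d(B)=0$, giving $v(B)=D(B)+13\le24$. Because $v(B)$ is odd and $D(B)=v(B)-13$, we also have $D(B)$ even, so $D(B)\le10$ and $v(B)\le23$.

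For the odd bound we can also argue directly: $v(B)=24$ is even, so the even case already excludes it. Hence every block has order at most $23$, and every block of even order has order at most $18$.

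The only subtle step is remembering that $d(B)\equiv v(B)-1\pmod 2$. That parity fact alone forces $d(B)\ge1$ for even order, and the rest is arithmetic with $D(B)\le11$. No structural result beyond Lemma~\ref{lem:bookkeeping} and Theorem~\ref{thm2} is needed.
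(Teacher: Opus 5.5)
Your proof is correct and follows the paper's argument. It combines $D(B)\le 11$ from Lemma~\ref{lem:bookkeeping}, $d(B)\ge 0$ from \eqref{eq:bound-b}, the identity $v(B)=D(B)+13-6d(B)$, and the parity $d(B)\equiv v(B)-1\pmod 2$, which forces $d(B)\ge 1$ for even order; your extra case split for odd order is unnecessary, since $v(B)\le 24$ together with oddness already gives $v(B)\le 23$, as the paper observes.
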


\begin{proof}
Let $B$ be a block of $G$. The assertion is clear if $B=K_2$.
Otherwise, $B$ is 2-connected. The additivity and
nonnegativity of $D$ give
\[
0\le D(B)\le D(G)\le11,
\]
while the 2-connected bound gives $d(B)\ge0$. Hence
\[
v(B)=D(B)+13-6d(B)\le24.
\]
If $v(B)$ is odd, then $v(B)\le23$.
If $v(B)$ is even, then $d(B)=3v(B)-1-2e(B)$ is odd,
so $d(B)\ge1$ and consequently $v(B)\le18$.
\end{proof}

\section{The 2-connected case}\label{sec:two-connected}

\begin{lemma}\label{lem:primitive-b}
Let $G$ be a 2-connected $\mathcal C_{0\bmod4}$-free graph.
If $d(G)\le1$, then $G$ is switching-equivalent to
$L_8$, $L_{13}$, or a graph containing adjacent vertices
of degree 2.
In the latter case, every such pair forms the internal
vertices of a 3-edge ear with distinct ends.
\end{lemma}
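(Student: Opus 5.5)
The plan is to follow the proof of Lemma~\ref{lem:reduce} closely, with the bound $d(G)\le1$ replacing $D(G)\le11$. If some graph $G'$ switching-equivalent to $G$ has adjacent vertices $u,v$ of degree 2, then $G'\ne K_3$, because $d(K_3)=2$ and switches preserve both $v$ and $e$. The argument at the start of Lemma~\ref{lem:reduce} then shows that $auvb$ is a 3-edge ear with distinct ends. So we may assume that the whole switching orbit of $G$ is free of adjacent $2$-degree pairs.

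\emph{Triangle-stars.} If some $G'$ in the orbit contains a triangle-star, Lemma~\ref{lem:triangle-star} gives either $H=K_3$ or $d(G)\ge4$. In the first case $G'\cong S_7$, so $d(G)=2$. Both cases contradict $d(G)\le1$, and hence $G$ is locally reduced throughout its switching orbit.

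\emph{No good cut.} If $G$ has no good cut, then $G\ne K_3$ since $d(K_3)=2$, and Lemma~\ref{lem:pentagons} gives $d(G)\ge2$. This is again a contradiction, so $G$ has a good cut.

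\emph{Good cut.} Lemma~\ref{lem:structure} and Corollary~\ref{cor:structure-counts} now apply. The table there gives $d(G)=s-1$, $d(G)\ge s-2$ and $d(G)\ge s-3$ for the three type combinations, with $s\equiv3,1,3\pmod4$ respectively. Together with $d(G)\le1$, this forces:
\begin{itemize}
\item two triangles with $s=3$ and $d(G)=2$, which is impossible;
\item mixed types with $s=1$, or $s=5$ with $d(G)\le1$ requiring $e(G)\ge s+11$;
\item two copies of $T$ with $s=3$.
\end{itemize}
For mixed types with $s=5$ we have $v(G)=12$, so $d(G)=35-2e(G)$ is odd, and $d(G)=1$ requires $e(G)=17=s+12$. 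This exceeds the bound $e(G)\le s+11$ of Corollary~\ref{cor:structure-counts}, so this case cannot occur. For mixed types with $s=1$, Corollary~\ref{cor:structure-counts} gives $e(G)=11$. The labelling argument of Lemma~\ref{lem:reduce} and Remark~\ref{rem:l8-switch} then show that $G$ is switching-equivalent to $L_8$, with $d=1$, as required. For two copies of $T$ we have $v(G)=13$, so $d(G)$ is even and therefore $d(G)=0$ and $e(G)=19$. The last part of the proof of Lemma~\ref{lem:reduce} then puts $G$ in $\mathcal Z$, and Proposition~\ref{prop:switch} gives switching-equivalence to $L_{13}$.

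\emph{Main obstacle.} The step needing most care is checking that the case analysis of Lemma~\ref{lem:reduce} never used the $D$-bound beyond the derived values of $s$. The parity argument above excludes the mixed case with $s=5$ using only the corollary's edge bound, so I expect this to hold. Once it is confirmed, the remaining configurations are exactly those already treated, and the lemma follows.
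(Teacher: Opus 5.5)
Your proof is correct and follows the paper's route: the same reduction to a switching orbit with no adjacent $2$-degree pairs, the same exclusion of triangle-stars and of the no-good-cut case via Lemma~\ref{lem:pentagons}, and the same use of Corollary~\ref{cor:structure-counts} to force $s=1$ or $s=3$. Your separate treatment of $s=5$ is unnecessary, since $d(G)\ge s-2$ together with $d(G)\le1$ already gives $s\le3$. The only real difference is at the end: instead of re-checking the internals of Lemma~\ref{lem:reduce}, the paper notes that $v(G)\in\{8,13\}$, hence $D(G)=v(G)-13+6d(G)\le6$, and then applies Lemma~\ref{lem:reduce} as a black box, discarding $K_3,S_7,R_9$ because they have $d=2$. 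This removes the ``main obstacle'' you identify.
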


\begin{proof}
Since switches preserve $d$ and $d(K_3)=2$, every graph
in the switching orbit of $G$ is distinct from $K_3$.
For adjacent $2$-degree vertices in any such graph,
coincident remaining neighbors would give a cut vertex.
Thus each pair forms the internal vertices of a 3-edge
ear with distinct ends.
It remains to consider an orbit containing no such pair.

A triangle-star in a graph in this orbit would give,
by Lemma~\ref{lem:triangle-star}, a graph $H$ with
$d(H)=d(G)$ and either $H=K_3$ or $d(G)\ge4$.
Since $d(K_3)=2$, both alternatives contradict $d(G)\le1$.
Hence $G$ is locally reduced throughout its switching orbit.

If $G$ had no good cut, Lemma~\ref{lem:pentagons}
would give $d(G)\ge2$.
Thus Lemma~\ref{lem:structure} supplies the subgraphs
$Q_1,Q_2$ and paths $P_1,P_2$.
For $s=\ell(P_1)+\ell(P_2)$,
Corollary~\ref{cor:structure-counts} and $d(G)\le1$ give
\[
\begin{array}{c|c}
\text{types of }Q_1,Q_2&\text{conditions on }s\\ \hline
K_3,K_3&s\le2,\quad s\equiv3\pmod4\\
K_3,T&s\le3,\quad s\equiv1\pmod4\\
T,T&s\le4,\quad s\equiv3\pmod4
\end{array}
\]
The first row is impossible.
The remaining rows give $s=1$ and $s=3$, respectively,
and hence
\[
 v(G)\in\{8,13\},\qquad
 D(G)=v(G)-13+6d(G)\le6.
\]
Lemma~\ref{lem:reduce} now applies.
The absence of adjacent $2$-degree vertices throughout
the switching orbit excludes its last alternative.
Moreover,
\[
 d(K_3)=d(S_7)=d(R_9)=2
\]
by Table~\ref{tab:seeds}.
Thus $G$ is switching-equivalent to $L_8$ or $L_{13}$.
\end{proof}

A \emph{rooted graph} $(H;s,t)$ has an ordered pair of
distinct vertices $s,t$; a rooted copy preserves this order.
A \emph{cap} $S_7(s,t)$ is a rooted copy of $S_7$ with
$s$ at an internal vertex of an arm and $t$ at the centre.
Its internal vertex set is $V(S_7)\setminus\{s,t\}$.
The underlying graph is $F_7$ of Chu, Park and Ryu~\cite{CPR},
but our roots differ from those in their figure.
For this root pair,
\begin{equation}\label{eq:seven-cap-data}
 v(S_7)=7,\qquad e(S_7)=9,\qquad d(S_7)=2,\qquad
 \mathcal R(S_7;s,t)=\{0,1\}.
\end{equation}
Indeed, the $(s,t)$-paths have lengths $1,4,5$: besides $st$,
each uses the edge from $s$ to the triangle, 1 or 2 triangle
edges, and another arm.

\begin{definition}\label{def:class-B}
The class $\mathcal B$ consists of the graphs obtained from
$L_8,L_{13}$ by finitely many applications of B1--B4 below.
Each input must already have been constructed; two-input
operations use disjoint copies before the stated identification.
\begin{enumerate}[label=\textbf{B\arabic*.},ref=B\arabic*]
\item\label{rule:B1}
A Whitney switch is performed.

\item\label{rule:B2}
An input $H$ is extended by a 3-edge ear with ends $a,b$, where
\[
 a,b\in V(H),\qquad a\ne b,\qquad 1\notin\mathcal R(H;a,b).
\]

\item\label{rule:B3}
Two inputs $B_1,B_2$ have root pairs $(v_1,x),(v_2,y)$ satisfying
\[
 d(B_1)=d(B_2)=1,~ v_1x\in E(B_1),~v_2y\in E(B_2),
\text{ and } 
 \mathcal R(B_1;v_1,x)=\mathcal R(B_2;v_2,y)=\{1,2\}.
\]
The vertices $v_1,v_2$ are identified, and a 3-edge ear
is added between $x,y$.

\item\label{rule:B4}
Two inputs $K,B_1$ satisfy $d(K)=d(B_1)=1$.
The graph $K$ contains a cap $S_7(v,y)$ with
\[
 I=V(S_7(v,y))\setminus\{v,y\},\qquad
 \{e\in E(K):e\cap I\ne\varnothing\}\subseteq E(S_7(v,y)).
\]
The graph $B_2=K-I$ is 2-connected, and the root pairs
$(v_1,x)$ in $B_1$ and $(v,y)$ in $B_2$ satisfy
\[
 v_1x\in E(B_1),\quad vy\in E(B_2),\qquad
 \mathcal R(B_1;v_1,x)=\mathcal R(B_2;v,y)=\{1,2\}.
\]
In the disjoint union of $B_1,B_2$, the vertices $v_1,v$
are identified, and a 3-edge ear is added between $x,y$.
\end{enumerate}
\end{definition}

\begin{lemma}\label{lem:b-soundness}
Rules B1--B4 preserve 2-connectivity and
$\mathcal C_{0\bmod4}$-freeness.
Rules B1 and B2 preserve $d$.
The outputs $G$ of B3 and B4 satisfy $d(G)=0$ and $d(G)=1$,
respectively.
Consequently, every graph in $\mathcal B$ is bound-tight.
\end{lemma}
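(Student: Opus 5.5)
The plan is to verify each rule separately, then conclude by induction on the number of operations, starting from the seeds $L_8$ and $L_{13}$. Both seeds are 2-connected and $\mathcal C_{0\bmod4}$-free, with $d(L_8)=1$ and $d(L_{13})=0$ by Table~\ref{tab:seeds}. For any 2-connected graph, $d\in\{0,1\}$ is the same as equality in \eqref{eq:bound-b}. So it suffices to show that each rule preserves 2-connectivity and $\mathcal C_{0\bmod4}$-freeness and produces an output with $d\in\{0,1\}$.

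\textbf{Rules B1 and B2.} For B1, Whitney switches preserve 2-connectivity and $\mathcal C_{0\bmod4}$-freeness, as recalled in Section~\ref{sec:preliminaries}. They also preserve vertex and edge counts, hence $d$. For B2, adding an ear with distinct ends to a 2-connected graph keeps it 2-connected. Lemma~\ref{lem:sound} with $k=3$ gives freeness from $-3\equiv1\notin\mathcal R(H;a,b)$, and $d$ changes by $k-3=0$.

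\textbf{Rule B3.} Let $G$ be obtained by identifying $v_1,v_2$ into $v$ and adding the ear $xaby$.
\begin{itemize}
\item \emph{2-connectivity.} $B_1\cup B_2$ has the single cut vertex $v$, and the ear joins $B_1-v$ to $B_2-v$. Any single vertex deletion therefore leaves the graph connected.
\item \emph{Count of $d$.} Identification changes $d$ by $-3$ (one vertex lost, no edges), giving $d=1+1-3=-1$. The ear then adds $k-3=0$. So far this gives $-1$, which cannot be right, so the count must be redone carefully: $d(B_1\cup B_2 \text{ glued})=3(v_1+v_2-1)-1-2(e_1+e_2)=d(B_1)+d(B_2)-2=0$, and the ear contributes $0$. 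Hence $d(G)=0$.
\item \emph{Freeness.} Every new cycle uses the ear. It consists of the ear, an $(x,v)$-path in $B_1$ and a $(v,y)$-path in $B_2$, because $v$ is a cut vertex of the glued graph separating $x$ from $y$. Its length residue lies in $3+\{1,2\}+\{1,2\}=\{1,2,3\}$, so it is never $0$.
\end{itemize}

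\textbf{Rule B4.} This is the main obstacle. I would count $d$ first. $K$ has $d(K)=1$, and deleting the five internal vertices $I$ of the cap removes $5$ vertices and, by the edge-containment hypothesis, all $9-1=8$ cap edges other than $vy$ (note $vy\in E(B_2)$ is kept). This gives $d(B_2)=1-15+16=2$. Gluing as in B3 then gives $d=d(B_1)+d(B_2)-2=1$, and the ear adds $0$, so $d(G)=1$.

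2-connectivity follows exactly as in B3, since $B_2$ is assumed 2-connected. Freeness of new cycles is the same residue computation as in B3, using $\mathcal R(B_2;v,y)=\{1,2\}$ and the fact that cycles inside $B_1$ or $B_2$ are cycles of $B_1$ or $K$. Here $B_2\subseteq K$, so $B_2$ is free.

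The delicate point is whether the hypotheses are consistent with my reading of which cap edges survive. I would check the counts against \eqref{eq:seven-cap-data}, $d(S_7)=2$, and confirm that only $vy$ among the cap edges is retained. If instead $vy$ is not a cap edge, the count shifts, and I would recompute it from the statement's edge-containment condition.
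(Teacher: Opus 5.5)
Your proposal is correct and follows the paper's proof essentially step for step. It uses the same residue computation $3+\{1,2\}+\{1,2\}=\{1,2,3\}$ for cycles through the ear, the same identity $d(G)=d(B_1)+d(B_2)-2$, and the same counts $v(B_2)=v(K)-5$, $e(B_2)=e(K)-8$ giving $d(B_2)=2$ in B4.

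Two fixes for the write-up. First, delete the false start $1+1-3=-1$; the error was that the disjoint union already has deficit $d(B_1)+d(B_2)+1$. Second, settle your closing hedge directly: $v$ sits at an internal arm vertex adjacent to the centre $y$, so $vy$ is a cap edge, and exactly the other $8$ cap edges meet $I$.
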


\begin{proof}
The assertions for B1 follow from the preservation
properties of Whitney switches.
For B2, the addition of an ear preserves 2-connectivity,
while Lemma~\ref{lem:sound} gives
$\mathcal C_{0\bmod4}$-freeness and $d(G)=d(H)$.

For B3 and B4, the graphs $B_1,B_2$ are 2-connected
and $\mathcal C_{0\bmod4}$-free.
After the stated identification,
\[
 V(B_1)\cap V(B_2)=\{v\},\qquad
 \mathcal R(B_1;v,x)=\mathcal R(B_2;v,y)=\{1,2\}.
\]
The output $G$ consists of $B_1\cup B_2$ and a 3-edge
ear $P$ with ends $x,y$.
Every cycle $C$ contained in neither $B_1$ nor $B_2$
has the form $P\cup Q_1\cup Q_2$.
Here $Q_1$ is a $(v,x)$-path in $B_1$, and $Q_2$
is a $(v,y)$-path in $B_2$.
Thus
\[
 \ell(C)\bmod4\in3+\{1,2\}+\{1,2\}=\{1,2,3\},
\]
so $G$ is $\mathcal C_{0\bmod4}$-free.

The graph $G-v$ is connected through $P$.
For $z\in V(G)\setminus\{v\}$, the surviving parts of
$B_1,B_2$ remain connected through $v$, and every
component of $P-z$ meets their union.
Hence $G$ is 2-connected.
Moreover,
\(
 v(G)=v(B_1)+v(B_2)+1\) \text{ and } \(
 e(G)=e(B_1)+e(B_2)+3,
\)
which gives
\begin{equation}\label{eq:rooted-join-deficit}
 d(G)=d(B_1)+d(B_2)-2.
\end{equation}
For B3, this yields $d(G)=0$.

In B4, the graph $K$ is the union of $B_2$ and the specified copy of $S_7(v,y)$.
Their intersection consists of the edge $vy$ and its ends.
Consequently,
\[
 v(B_2)=v(K)-5,\qquad e(B_2)=e(K)-8,\qquad
 d(B_2)=d(K)+1=2.
\]
Since $d(B_1)=1$, equation~\eqref{eq:rooted-join-deficit}
yields $d(G)=1$.

For a 2-connected $\mathcal C_{0\bmod4}$-free graph $H$,
the definition of $d$ and the integrality of $e(H)$ give
\[
 d(H)\in\{0,1\}
 \quad\Longleftrightarrow\quad
 e(H)=\left\lfloor\frac{3v(H)-1}{2}\right\rfloor.
\]
Since $d(L_8)=1$ and $d(L_{13})=0$, the last assertion
follows by induction on the number of operations.
\end{proof}

Figure~\ref{fig:construction-examples} gives two graphs of order 10
obtained from $L_8$. On the left, G1 attaches a triangle at $x$.
On the right, B2 adds the ear $xuvy$. Every $(x,y)$-path in
$L_8$ lies in $T$ or in the subgraph on $\{x,y,r,s\}$.
The paths in $T$ have lengths 3 and 4, and those in the latter
subgraph have lengths 2 and 3. Thus
$\mathcal R(L_8;x,y)=\{0,2,3\}$, as required by B2.
Both graphs have 14 edges and $D=3$, so they attain the general bound.
The graph on the left has cut vertex $x$; the graph on the right
is 2-connected and has $d=1$.

\begin{figure}[htbp]
\centering
\begin{tikzpicture}[
  x=.92cm,y=.92cm,line width=.65pt,
  every node/.style={font=\small},
  vertex/.style={circle,fill=black,inner sep=0pt,minimum size=3.2pt},
  every label/.append style={font=\small,inner sep=1pt},
  label distance=2pt
]
\foreach \j/\dx in {A/0,B/6.2}{
  \begin{scope}[shift={(\dx,0)}]
    \coordinate (\j-x) at (0,2.4);
    \coordinate (\j-a) at (-1.5,1.95);
    \coordinate (\j-b) at (-1.5,.45);
    \coordinate (\j-y) at (0,0);
    \coordinate (\j-c) at (-.6,.8);
    \coordinate (\j-d) at (-.6,1.7);
    \coordinate (\j-r) at (1.05,.75);
    \coordinate (\j-s) at (1.5,2.4);
    \draw (\j-x)--(\j-a)--(\j-b)--(\j-y)
          --(\j-c)--(\j-d)--cycle (\j-b)--(\j-c)
          (\j-x)--(\j-r)--(\j-s)--cycle (\j-r)--(\j-y);
    \foreach \z in {a,b,c,d,r,s}
      \node[vertex] at (\j-\z) {};
    \node[vertex,label=above:$x$] at (\j-x) {};
    \node[vertex,label=below:$y$] at (\j-y) {};
  \end{scope}
}
\coordinate (A-u) at (-.7,3.6);
\coordinate (A-v) at (.7,3.6);
\draw[dashed] (A-x)--(A-u)--(A-v)--cycle;
\node[vertex,label=above:$u$] at (A-u) {};
\node[vertex,label=above:$v$] at (A-v) {};
\node at (0,-.75) {\textbf{(a)} G1: attach a triangle};
\coordinate (B-u) at (4,2.4);
\coordinate (B-v) at (4,0);
\draw[dashed] (B-x)--(B-u)--(B-v)--(B-y);
\node[vertex,label=above:$u$] at (B-u) {};
\node[vertex,label=below:$v$] at (B-v) {};
\node at (5.9,-.75) {\textbf{(b)} B2: add the ear $xuvy$};
\end{tikzpicture}
\caption{Two constructions from $L_8$. }
\label{fig:construction-examples}
\end{figure}
\FloatBarrier

\begin{lemma}\label{lem:cap}
Let $B$ be a 2-connected $\mathcal C_{0\bmod4}$-free graph,
and let $a,b\in V(B)$ be distinct.
\begin{enumerate}[label=\rm(\roman*)]
\item
If $\mathcal R(B;a,b)$ is a mixed-parity pair, then
\[
 d(B)\ge
 \begin{cases}
  1,&\mathcal R(B;a,b)=\{1,2\};\\
  2,&\mathcal R(B;a,b)\in\{\{0,1\},\{2,3\}\};\\
  3,&\mathcal R(B;a,b)=\{0,3\}.
 \end{cases}
\]
Moreover, if $\mathcal R(B;a,b)=\{1,2\}$ and
$ab\notin E(B)$, then $d(B)\ge3$.
Consequently, if $d(B)=1$ and $\mathcal R(B;a,b)$
is a mixed-parity pair, then
\[
 \mathcal R(B;a,b)=\{1,2\},
 \qquad ab\in E(B).
\]
\item
If $d(B)=0$, then $0\in\mathcal R(B;a,b)$.
\end{enumerate}
\end{lemma}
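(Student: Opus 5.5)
The plan is to prove every lower bound in (i) the same way. From copies of $B$ I build a larger 2-connected $\mathcal C_{0\bmod4}$-free graph $G$ whose deficit is an explicit function of $d(B)$. Then \eqref{eq:bound-b}, in the form $d(G)\ge0$, gives the bound. The basic gadget is the rooted join from the proof of Lemma~\ref{lem:b-soundness}. Take two disjoint copies $(B^1;a_1,b_1)$ and $(B^2;a_2,b_2)$ of $(B;a,b)$, identify $a_1$ with $a_2$, and join $b_1,b_2$ by an ear of length $m\ge1$. Every new cycle has length in $m+\mathcal R(B;a,b)+\mathcal R(B;a,b)$. The argument for B3 gives 2-connectivity, and counting vertices and edges gives
\[
 d(G)=2d(B)-2+(m-3).
\]
If $\mathcal R(B;a,b)=\{1,2\}$, then $\{1,2\}+\{1,2\}=\{2,3,0\}$, so $m=3$ is admissible. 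This is exactly B3 and yields $d(B)\ge1$. If $\mathcal R(B;a,b)\in\{\{0,1\},\{2,3\}\}$, then $R+R=\{0,1,2\}$, so a single edge ($m=1$) is admissible. This yields $2d(B)-4\ge0$, that is, $d(B)\ge2$.

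For the refinement with $\mathcal R(B;a,b)=\{1,2\}$ and $ab\notin E(B)$, I add the edge $ab$. By Lemma~\ref{lem:sound} with $k=1$ (the count $d(G')-d(G)=k-3$ is still valid), the graph $B+ab$ is $\mathcal C_{0\bmod4}$-free, since $-1\equiv3\notin\{1,2\}$. It is 2-connected, it satisfies $d(B+ab)=d(B)-2$, and it still has $\mathcal R(B+ab;a,b)=\{1,2\}$ because the only new $(a,b)$-path is the edge itself. Applying the first bound to $B+ab$ gives $d(B)-2\ge1$. The ``consequently'' clause is then bookkeeping: if $d(B)=1$, every mixed pair other than $\{1,2\}$ is excluded, and within $\{1,2\}$ the case $ab\notin E(B)$ is excluded.

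The case $\mathcal R(B;a,b)=\{0,3\}$ is the main obstacle. Here $R+R=\{0,2,3\}$, so only $m\equiv3$ is admissible, and the join gives only $d(B)\ge1$. Single ears of length $2$ or $3$ are no better. First I would try iterating the gadget. The join $G$ with $m=3$ has $d(G)=2d(B)-2$, and I would compute a residue set of $G$ between suitable roots, for instance between $a$ and an internal vertex of the new ear. If this set is $\{0,1\}$ or $\{2,3\}$, the bound already proved gives $2d(B)-2\ge2$, and I would aim for a pair giving $d(G)\ge4$, hence $d(B)\ge3$. Failing that, I would combine a copy of $B$ with a cap $S_7(s,t)$, whose residue set is $\{0,1\}$ by \eqref{eq:seven-cap-data}, so that the residue sums avoid $0$ more economically.

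For (ii), suppose $d(B)=0$ and $0\notin\mathcal R(B;a,b)$. Then $B$ is not bipartite, since otherwise $d(B)\ge5$. Lemma~\ref{lem:parity} then gives a residue of each parity, so $\mathcal R(B;a,b)$ is $\{1,2\}$, $\{2,3\}$ or $\{1,2,3\}$. Part (i) excludes the first two. For $\{1,2,3\}$ I would remove one path class by passing to a subgraph, or else glue a second copy so that the forbidden sums force $d\ge1$. I expect this case to need an ad hoc argument.
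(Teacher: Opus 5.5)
Your rooted-join (doubling) gadget is correct for the pairs $\{1,2\}$, $\{0,1\}$ and $\{2,3\}$, and adding the edge $ab$ correctly gives the refinement $d(B)\ge3$. This is a valid alternative to the paper, which glues a cap $S_7(s,t)$ in parallel at $(a,b)$ and reduces the other pairs to $\{1,2\}$ by single ears. However, two cases are left open, and both are needed.

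\emph{First gap: the pair $\{0,3\}$ in (i).} The bound $d(B)\ge3$ for $\mathcal R(B;a,b)=\{0,3\}$ is not proved. Without it the ``consequently'' clause is not established either: excluding $\{0,3\}$ when $d(B)=1$ already needs $d(B)\ge2$. Proposition~\ref{prop:ear-reduction} uses the full bound $d\ge3$. The missing idea is to re-root at a new vertex after adding an ear.
\begin{itemize}
\item Attach a 2-edge ear $azb$. This is admissible because $-2\equiv2\notin\{0,3\}$, and it lowers $d$ by $1$.
\item Take the new root pair $(a,z)$. Its paths are the edge $az$ and the old $(a,b)$-paths followed by $bz$, so $\mathcal R(B\cup azb;a,z)=\{1\}\cup(\{0,3\}+1)=\{0,1\}$.
\item Your own $\{0,1\}$ bound then gives $d(B)-1\ge2$.
\end{itemize}
Your remark that single ears ``are no better'' only considers the old roots $a,b$. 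The iterated $m=3$ join you sketch does not obviously help: every root pair among $a,b_1,b_2$ and the two ear vertices has a residue set of size at least $3$, never a mixed pair.

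\emph{Second gap: the case $\mathcal R(B;a,b)=\{1,2,3\}$ in (ii).} This case is the heart of (ii) and is left open. Gluing a second copy cannot reach it, since $L+L=\mathbb Z/4\mathbb Z$ forbids every ear length. It is also unclear how passing to a subgraph would keep control of $d$. The paper's argument runs as follows.
\begin{itemize}
\item First, $ab\notin E(B)$, since this edge and a residue-$3$ path would close a forbidden cycle.
\item If $B-\{a,b\}$ is connected, identify $a$ and $b$ into one vertex. Merge the resulting parallel pairs; there are $q\le1$ of them because $B$ has no 4-cycle. The result is 2-connected. Its cycles lift to cycles of $B$ or to $(a,b)$-paths, so it is $\mathcal C_{0\bmod4}$-free because $0\notin L$. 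But its deficit is $d(B)-3+2q<0$, contradicting \eqref{eq:bound-b}.
\item If $\{a,b\}$ is a cut, the residue constraints force exactly two sides: $A$ with $\mathcal R(A;a,b)=\{1,3\}$ and $C$ with $\mathcal R(C;a,b)=\{2\}$. Lemma~\ref{lem:parity}, applied to $A+ab$ and to $C\cup azb$ for a new vertex $z$, makes both sides bipartite. Lemma~\ref{lem:bipartite-bound} then gives $d(B)=d(A)+d(C)-5\ge4$, a contradiction.
\end{itemize}
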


\begin{proof}
Throughout the proof, $L$ denotes $\mathcal R(B;a,b)$.

(i) For $L=\{1,2\}$, the graph $H$ is obtained from $B$
and a disjoint cap $S_7(s,t)$ by identifying $s=a$
and $t=b$.
Any duplicated edge $ab$ is retained only once.
The graph $H$ is 2-connected.
Every cycle contained in neither $B$ nor the cap
consists of one $(a,b)$-path from each.
Its residue belongs to
\[
 L+\mathcal R(S_7;s,t)
 =\{1,2\}+\{0,1\}
 =\{1,2,3\}.
\]
Hence $H$ is $\mathcal C_{0\bmod4}$-free.
For $\epsilon=\mathbf1_{\{ab\in E(B)\}}$, the identities
\[
 v(H)=v(B)+5,\qquad e(H)=e(B)+9-\epsilon
\]
give
\[
 0\le d(H)=d(B)-3+2\epsilon.
\]
This proves both bounds for $L=\{1,2\}$.

The remaining bounds follow from the ear additions
in the following table, where $B'=B\cup P$ and all
internal vertices of $P$ are new.
Each addition preserves 2-connectivity.
Since $-\ell(P)\notin L$ in every row,
Lemma~\ref{lem:sound} gives
$\mathcal C_{0\bmod4}$-freeness and the stated change in $d$.
\[
\begin{array}{c|c|c|c|c}
 L&P&\text{root pair}&\text{residue set}&d(B')-d(B)\\ \hline
 \{0,1\}&azb&(a,z)&\{1,2\}&-1\\
 \{2,3\}&auvb&(u,v)&\{0,1\}&0\\
 \{0,3\}&azb&(a,z)&\{0,1\}&-1
\end{array}
\]
A path between the new roots is either their common edge
or an old $(a,b)$-path together with the corresponding
ear edges.
Thus the residue sets in the table are
$\{1\}\cup(L+1)$ for $P=azb$ and
$\{1\}\cup(L+2)$ for $P=auvb$.

The first row gives $d(B)-1\ge1$ for $L=\{0,1\}$.
This bound and the second row give $d(B)\ge2$
for $L=\{2,3\}$.
The third row then gives $d(B)-1\ge2$
for $L=\{0,3\}$.

(ii) Suppose that $d(B)=0$ and $0\notin L$.
The deficit identity gives
\[
 v(B)=D(B)+13\ge13.
\]
By Lemma~\ref{lem:bipartite-bound}, the graph $B$
is nonbipartite.
Lemma~\ref{lem:parity} implies that $L$ contains
both parities, while part (i) excludes $|L|=2$.
Consequently,
\[
 L=\{1,2,3\},\qquad ab\notin E(B).
\]
Indeed, the edge $ab$ and an $(a,b)$-path of residue 3
would form a forbidden cycle.

If $B-\{a,b\}$ is connected, the graph $H$ is obtained
by identifying $a,b$ as a single vertex $c$
and merging parallel edges.
Since $B$ contains no 4-cycle,
\[
 q=|N_B(a)\cap N_B(b)|\le1.
\]
Therefore
\[
 v(H)=v(B)-1\ge12,\qquad e(H)=e(B)-q,
\]
and
\[
 d(H)=d(B)-3+2q=-3+2q<0.
\]
The graph $H-c=B-\{a,b\}$ is connected.
For $z\in V(H)\setminus\{c\}$, the graph $H-z$
is obtained by identifying $a,b$ in the connected
graph $B-z$ and merging parallel edges.
Hence $H$ is 2-connected.
Every cycle of $H$ lifts to a cycle of $B$
or an $(a,b)$-path of the same length.
Since $0\notin L$, the graph $H$ is
$\mathcal C_{0\bmod4}$-free.
This contradicts \eqref{eq:bound-b}.

If $\{a,b\}$ is a 2-vertex cut, every side contains
an $(a,b)$-path, and every such path lies in one side.
The residues 1 and 3 must occur in the same side $A$,
since paths with these residues in different sides
would form a forbidden cycle.
Every other side has residue set $\{2\}$.
There is exactly one such side $C$, since two would
give a forbidden cycle of residue $2+2$.
The same obstruction excludes residue 2 from $A$.
Thus
\[
 \mathcal R(A;a,b)=\{1,3\},\qquad
 \mathcal R(C;a,b)=\{2\}.
\]

For a new vertex $z$, the graphs $A+ab$ and $C\cup azb$
are 2-connected.
Their $(a,b)$-paths are all odd and all even,
respectively.
Lemma~\ref{lem:parity} therefore implies that
$A$ and $C$ are bipartite.

Since $ab\notin E(B)$, an odd $(a,b)$-path in $A$
has length at least 3.
Hence $v(A)\ge4$, and Lemma~\ref{lem:bipartite-bound}
gives $d(A)\ge5$.
The side $C$ is either a 2-edge path, with $d(C)=4$,
or has order at least 4, with $d(C)\ge5$.
Since the two sides share only $a,b$ and have
disjoint edge sets,
\[
 d(B)=d(A)+d(C)-5\ge5+4-5=4,
\]
contrary to $d(B)=0$.
\end{proof}

\begin{proposition}\label{prop:ear-reduction}
Let $G$ be a 2-connected $\mathcal C_{0\bmod4}$-free graph. If $G$ is bound-tight and contains adjacent vertices of degree 2, then $G$ is obtained by B2, B3 or B4
from bound-tight graphs of order less than $v(G)$.
\end{proposition}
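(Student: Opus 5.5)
Fix adjacent vertices $u,v$ of degree 2 in $G$. Since $G$ is bound-tight we have $d(G)\le1$, so $G\ne K_3$. As in Lemma~\ref{lem:primitive-b}, $u,v$ are the internal vertices of a 3-edge ear $P=auvb$ with $a\ne b$. Put $H=G-\{u,v\}$. By Lemma~\ref{lem:sound}, $d(H)=d(G)$. Moreover
\[
 1\notin \mathcal R(H;a,b),
\]
because $P$ together with an $(a,b)$-path of length $1\bmod 4$ would be a forbidden cycle. The plan splits according to whether $H$ is 2-connected.

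\emph{Case 1: $H$ is 2-connected.} Then $H$ is $\mathcal C_{0\bmod4}$-free and bound-tight (Lemma~\ref{lem:b-soundness} gives $d(H)\in\{0,1\}\iff$ tightness). Thus $G$ arises from $H$ by B2.

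\emph{Case 2: $H$ is not 2-connected.} Since $G$ is 2-connected, $H$ is connected and its blocks form a chain $B_1,\dots,B_k$ with $k\ge2$. The connecting vertices are $z_0=a,z_1,\dots,z_k=b$. Counting vertices and edges gives
\[
 d(H)=\sum_{i=1}^k d(B_i)-2(k-1)\le1 .
\]
Every cycle through $P$ uses one $(z_{i-1},z_i)$-path in each block. Hence freeness is equivalent to
\[
 1\notin R_1+\cdots+R_k,\qquad R_i=\mathcal R(B_i;z_{i-1},z_i).
\]
For each block we collect lower bounds on $d(B_i)$ in terms of $R_i$:
\begin{itemize}
\item a bridge has $d=3$ and $R_i=\{1\}$;
\item a bipartite 2-connected block has $d\ge5$ (by the bound $d\ge5$ derived from Lemma~\ref{lem:bipartite-bound}) and a single-parity set;
\item a nonbipartite block has a mixed-parity $R_i$ (Lemma~\ref{lem:parity}). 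By Lemma~\ref{lem:cap}(i), $|R_i|=2$ forces $d\ge1$, with $d=1$ only for $R_i=\{1,2\}$ and $z_{i-1}z_i\in E(B_i)$, and $d=2$ only for $\{1,2\}$, $\{0,1\}$ or $\{2,3\}$;
\item if $|R_i|\ge3$, then $d(B_i)$ is $0$ or larger, and by Lemma~\ref{lem:cap}(ii), $d(B_i)=0$ forces $0\in R_i$.
\end{itemize}
I then combine the budget $\sum d(B_i)\le 2k-1$ with sumset growth: a sum of sets each of size $\ge2$ in $\mathbb Z_4$ grows by at least one residue per summand until it is all of $\mathbb Z_4$. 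Blocks of large $d$ (bridges, bipartite blocks) are paid for by the budget, while blocks with small $d$ have large residue sets. I expect this to force $k=2$ with both blocks nonbipartite and $R_1=R_2=\{1,2\}$. Indeed, $\{1,2\}+\{1,2\}=\{2,3,0\}$ avoids $1$, whereas a set of size $3$ added to a set of size $2$ is already all of $\mathbb Z_4$. The remaining possibilities are $(d(B_1),d(B_2))\in\{(1,1),(1,2),(2,1)\}$.

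In the case $(1,1)$, Lemma~\ref{lem:cap}(i) gives the root edges, and $G$ is exactly a B3 output from $B_1,B_2$ (with $v_1=v_2=z_1$, $x=a$, $y=b$), so $d(G)=0$. In the case $d(B_1)=1$, $d(B_2)=2$ (so $d(G)=1$), I need $z_1b\in E(B_2)$, excluding the $ab\notin E$ subcase. Lemma~\ref{lem:cap}(i) gives $d\ge3$ when $R=\{1,2\}$ and the root edge is absent, so this holds. I then form $K$ by gluing a cap $S_7(z_1,b)$ onto $B_2$ along the edge $z_1b$, exactly as in the proof of Lemma~\ref{lem:cap}(i). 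That computation gives $d(K)=d(B_2)-3+2=1$, and shows $K$ is 2-connected and $\mathcal C_{0\bmod4}$-free. The cap's internal vertices meet only cap edges, and $K-I=B_2$. So $K$ is bound-tight, and $G$ is the B4 output from $K$ and $B_1$.

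Orders: $v(H)<v(G)$, $v(B_i)<v(G)$, and $v(K)=v(B_2)+5$. Since $v(B_1)\ge3$, we get $v(G)=v(B_1)+v(B_2)+1\ge v(B_2)+4$. This leaves the possibility $v(K)=v(G)+1$ when $B_1$ is a triangle. But a triangle has $d=2$, not $1$, so $v(B_1)\ge4$ and $v(K)\le v(G)$. I will need to double-check that equality is excluded, using that $d(B_1)=1$ forces odd order $\ge8$ by Lemma~\ref{lem:primitive-b}-type bounds.

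The main obstacle is the case elimination in Case 2. I must rule out $k\ge3$, bridges, bipartite blocks, and blocks with $|R_i|\ge3$ but small $d$ via a clean inequality. I would first try to prove a per-block bound of the form
\[
 d(B_i)\ge 2-\bigl(|R_i|-1\bigr),
\]
adjusted for bridges, and then sum it against the sumset lower bound $|R_1+\cdots+R_k|\ge\min\bigl(4,\sum(|R_i|-1)+1\bigr)$.
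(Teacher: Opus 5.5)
Your reductions in the cases you actually reach are correct and match the paper. If $H$ is 2-connected you get B2. For two nonbipartite blocks with $(d_1,d_2)=(1,1)$, adjacent roots and residue sets $\{1,2\}$ you get B3, and for $(1,2)$ the cap $S_7(z_1,b)$ gives B4. The gap is the step everything rests on: that in Case~2 only $k=2$ with two nonbipartite blocks and $R_1=R_2=\{1,2\}$ survives. You state this only as an expectation, and the route you propose cannot prove it.

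\emph{The sumset claim is false.} Your growth claim fails in $\mathbb Z/4\mathbb Z$ for same-parity sets, for example $\{1,3\}+\{1,3\}=\{0,2\}$, and bipartite blocks can have such residue sets.

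\emph{No counting argument can finish.} Summing $d(B_i)\ge 3-|R_i|$ against $\sum d(B_i)\le 2k-1$ gives only $\sum(|R_i|-1)\ge1$, i.e.\ $|R_1+\cdots+R_k|\ge2$. What you need is that $1$ lies in the sum. Two configurations that must be excluded can have sumsets of size $3$, so counting cannot see them:
\begin{itemize}
\item a nonbipartite block $B$ with $d(B)=0$ plus one bridge, which fits the budget $0+3\le3$;
\item two nonbipartite blocks with $d=1$ plus one bridge, where $\{1,2\}+\{1,2\}+\{1\}=\{0,1,3\}$.
\end{itemize}

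The paper instead argues with specific residues, using the identity $d(G)=2+\sum_i\bigl(d(B_i)-2\bigr)$. Bridges contribute $1$ to this sum and other bipartite blocks at least $3$.
\begin{itemize}
\item There is at least one nonbipartite block, since otherwise $d(G)\ge3$.
\item There are at most two, because three sets each containing a mixed-parity pair sum to all of $\mathbb Z/4\mathbb Z$.
\item With one nonbipartite block $B$, the budget forces either $H=B$, or $d(B)=0$ with a single bridge. In the latter case Lemma~\ref{lem:cap}(ii) gives $0\in\mathcal R(B)$, hence $1\in\mathcal R(H;a,b)$, a contradiction.
\item With two nonbipartite blocks, each residue set must be exactly a mixed-parity pair, since a set of size at least $3$ plus a $2$-set is everything. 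Lemma~\ref{lem:cap}(i) then gives $d_i\ge1$, so at most one bridge and no 2-connected bipartite block remain. This leaves $(1,1,0)$, $(1,2,0)$ and $(1,1,1)$.
\item The case $(1,1,1)$ dies by the explicit sum $\{1,2\}+\{1,2\}+\{1\}$ above. In $(1,2,0)$ you must still check $\{1,2\}+\{0,1\}\ni1$ and $\{1,2\}+\{2,3\}\ni1$ to force $R_2=\{1,2\}$.
\end{itemize}
The bound $d_i\ge1$ is genuinely needed. With two nonbipartite blocks of deficit $0$, the budget alone would allow a 2-connected bipartite block.

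There is also a small slip in your order count. $d(B_1)=1$ forces $v(B_1)$ to be \emph{even}, not odd. Together with $D(B_1)=v(B_1)-7\ge0$ this gives $v(B_1)\ge8$, which makes $v(K)=v(B_2)+5<v(G)$ strict.
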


\begin{proof}
The adjacent $2$-degree vertices are denoted by $p,q$, with
\[
 N_G(p)=\{a,q\},\qquad N_G(q)=\{p,b\}.
\]
Since $d(G)\in\{0,1\}$ and $d(K_3)=2$, the graph $G$
is distinct from $K_3$.
If $a=b$, then $a$ is a cut vertex of $G$.
Hence $a\ne b$.

The graph $H=G-\{p,q\}$ is connected, since $q$
is a leaf of the connected graph $G-p$.
The 2-connectivity of $G$ implies that the block-cut tree
of $H$ is a path, with $a,b$ in its end blocks.
The blocks are indexed in order as $B_1,\ldots,B_m$, with
\[
 x_1=a,\qquad y_m=b,\qquad
 V(B_i)\cap V(B_{i+1})=\{y_i\}=\{x_{i+1}\}
 \quad(1\le i<m).
\]
The deletion removes 2 vertices and 3 edges,
so $d(H)=d(G)$.
For $L_i=\mathcal R(B_i;x_i,y_i)$, the block identities
and concatenation of block paths give
\begin{align}
 d(G)&=2+\sum_{i=1}^m\bigl(d(B_i)-2\bigr),
 \label{eq:ear-deficit}\\
 \mathcal R(H;a,b)&=\sum_{i=1}^m L_i,
 \qquad 1\notin\mathcal R(H;a,b).
 \label{eq:ear-residues}
\end{align}
The residue 1 is excluded because the corresponding path
and $apqb$ would form a forbidden cycle.

For bipartite blocks, the identity $d(K_2)=3$ and
Lemma~\ref{lem:bipartite-bound} give
\[
\begin{aligned}
 d(B_i)-2&=1 &&(B_i=K_2),\\
 d(B_i)-2&\ge3 &&(B_i\ne K_2).
\end{aligned}
\]
Every nonbipartite block supplies a mixed-parity pair
in $L_i$ by Lemma~\ref{lem:parity}.
For any three mixed-parity pairs $A_1,A_2,A_3$,
\[
 |A_1+A_2|\ge3,\qquad
 A_1+A_2+A_3=\mathbb Z/4\mathbb Z.
\]
Thus \eqref{eq:ear-residues} allows at most
two nonbipartite blocks.
There is at least one, since otherwise
\eqref{eq:ear-deficit} gives $d(G)\ge3$.
The total contribution of the bipartite blocks is
\[
 W=\sum_{i:\,B_i\text{ is bipartite}}
       \bigl(d(B_i)-2\bigr).
\]

\medskip\noindent
\emph{Case 1. $H$ has exactly one nonbipartite block $B$.}
Equation~\eqref{eq:ear-deficit} gives
\(
 d(B)+W=d(G)\le1.
\)
For $W=0$, there are no other blocks.
Hence $H=B$ is bound-tight, and B2 reconstructs $G$
from the smaller graph $H$.
For $W>0$, the only possibility is
\[
 (d(B),W,d(G))=(0,1,1),
\]
with exactly one bridge.
Lemma~\ref{lem:cap}(ii) gives residue 0 between
the roots of $B$.
Together with the bridge residue 1, this contradicts
\eqref{eq:ear-residues}.

\medskip\noindent
\emph{Case 2. $H$ has exactly two nonbipartite blocks.}
These blocks are denoted by $B_1,B_2$, with deficits
$d_1,d_2$ and root residue sets $L_1,L_2$.
Both sets are mixed-parity pairs.
Indeed, a set of at least 3 residues added to a
mixed-parity pair gives $\mathbb Z/4\mathbb Z$.
Lemma~\ref{lem:cap}(i) and \eqref{eq:ear-deficit} yield
\[
 d_1,d_2\ge1,\qquad
 d_1+d_2+W=d(G)+2\le3.
\]
Consequently, no bipartite block containing a cycle occurs.
After interchanging the two blocks if necessary,
\[
 (d_1,d_2,W)\in
 \{(1,1,0),(1,1,1),(1,2,0)\}.
\]

For $(d_1,d_2,W)=(1,1,0)$,
Lemma~\ref{lem:cap}(i) gives
\(
 L_1=L_2=\{1,2\},
\)
with both root pairs adjacent.
Rule B3 reconstructs $G$ from the strictly smaller
bound-tight graphs $B_1,B_2$.

The possibility $(d_1,d_2,W)=(1,1,1)$ is excluded by
\[
 \mathcal R(H;a,b)
 =\{1,2\}+\{1,2\}+\{1\}
 =\{0,1,3\},
\]
contrary to \eqref{eq:ear-residues}.

It remains to consider $(d_1,d_2,W)=(1,2,0)$.
Lemma~\ref{lem:cap}(i) gives $L_1=\{1,2\}$
with adjacent roots.
If $L_2\in\{\{0,1\},\{2,3\}\}$, then
$1\in L_1+L_2$, a contradiction.
The possibility $L_2=\{0,3\}$ would require $d_2\ge3$.
Hence $L_2=\{1,2\}$.
Its roots are also adjacent, since otherwise
Lemma~\ref{lem:cap}(i) gives $d_2\ge3$.

The blocks meet at $v$, and their other roots
are denoted by $x,y$, respectively.
The graph $K$ is the union of $B_2$ and a cap
$S_7(v,y)$, with intersection consisting of
the edge $vy$ and its ends.
The cap construction in Lemma~\ref{lem:cap}(i)
shows that $K$ is 2-connected and
$\mathcal C_{0\bmod4}$-free, with
\[
 d(K)=d(B_2)-1=1,\qquad
 v(K)=v(B_2)+5.
\]
Since $d(B_1)=1$,
\[
 D(B_1)=v(B_1)-7\ge0,\qquad
 v(B_1)\equiv0\pmod2.
\]
Thus $v(B_1)\ge8$, and
\[
 v(K)=v(B_2)+5
 <v(B_1)+v(B_2)+1=v(G).
\]
The graph $B_1$ also has order less than $v(G)$.

For $I=V(S_7(v,y))\setminus\{v,y\}$, the construction gives
\[
 K-I=B_2,\qquad
 \{e\in E(K):e\cap I\ne\varnothing\}
 \subseteq E(S_7(v,y)).
\]
Hence $K,B_1$ satisfy the conditions of B4
and are strictly smaller bound-tight inputs.
The resulting graph is $G$.
\end{proof}

\begin{proof}[Proof of Theorem~\ref{thm:b}]
Every member of $\mathcal B$ is bound-tight by
Lemma~\ref{lem:b-soundness}.
For the converse, the stronger assertion that every
bound-tight graph belongs to $\mathcal B$ is proved
by induction on its order.

For a bound-tight graph $G$, we have $d(G)\in\{0,1\}$.
By Lemma~\ref{lem:primitive-b}, some graph $G'$ in its
switching orbit is isomorphic to $L_8$ or $L_{13}$,
or contains adjacent vertices of degree 2.
In the first two cases, $G'\in\mathcal B$ by the
definition of the seeds.
Otherwise, $G'$ is also bound-tight, and
Proposition~\ref{prop:ear-reduction} reconstructs it
by B2, B3 or B4 from bound-tight graphs of smaller order.
These inputs belong to $\mathcal B$ by induction,
so $G'\in\mathcal B$.
In either case, the inverse switches permitted by B1
give $G\in\mathcal B$.
\end{proof}

\section*{Declaration on the Use of AI}

The proof was found by GPT-5.6 following some strategies supplied by the authors. This paper presents the proof and explains the main strategies and the intuition behind its key new ideas. The authors later replaced or simplified many of the arguments. Other arguments came from the literature.  All results and proofs were written by the authors, who take full responsibility for the content of this paper.

\section*{Acknowledgments}

Yanan Hu’s work was supported by the Science and Technology Commission of Shanghai Municipality Grant Number 25ZR1402474. Leilei Zhang’s work was supported by the National Natural Science Foundation of China (Grant Nos. 12601664, 12571365), JSPS KAKENHI (Grant No.~25KF0036), the Hubei Provincial Natural Science Foundation of China (Grant No.~2025AFB309), the China Postdoctoral Science Foundation (Grant No.~2025M773113), the Fundamental Research Funds for the Central Universities, Central China Normal University (Grant No.~CCNU24XJ026).  

\section*{ Declaration}
	
	\noindent\textbf{Conflict~of~interest}
	The authors declare that they have no known competing financial interests or personal relationships that could have appeared to influence the work reported in this paper.
	
	\noindent\textbf{Data~availability}
	No data were used for the research described in this article.

\end{document}